%
%
%
%
%
%
\RequirePackage{fix-cm}
\documentclass[smallextended]{svjour3}       
\smartqed  
\usepackage{csquotes}
\usepackage{babel}
\usepackage{amsmath,amssymb}
\usepackage[utf8]{inputenc}
\usepackage{lscape}
\usepackage{algorithm}
\usepackage{algorithmic}
\usepackage{comment}
\usepackage{booktabs}
\usepackage{multirow}
\usepackage{subfigure}
\usepackage[colorlinks=true,breaklinks=true,bookmarks=true,urlcolor=blue,
     citecolor=blue,linkcolor=blue,bookmarksopen=false,draft=false]{hyperref}
\usepackage{graphicx}
\def\EMAIL#1{\href{mailto:#1}{#1}}

\DeclareMathAlphabet{\mathpzc}{OT1}{pzc}{m}{it}

\hyphenation{
	op-ti-mi-za-tion un-der-ly-ing dif-fe-ren-ti-able op-ti-ma-li-ty
	con-di-tions di-rec-ti-o-nal-ly con-di-tion in-for-ma-tion
	pos-ses-ses mi-ni-mi-zer in-ter-sec-tion
	}
\numberwithin{equation}{section}
\numberwithin{proposition}{section}
\numberwithin{theorem}{section}
\numberwithin{remark}{section}
\numberwithin{definition}{section} 
\numberwithin{example}{section}
\usepackage{hyperref}
\usepackage{geometry}
 \geometry{
 a4paper,
 total={170mm,258mm},
 left=20mm,
 top=20mm,
 }
%
%
%
%
%
\begin{document}

\title{Relaxation methods for pessimistic bilevel optimization}

\titlerunning{Relaxation methods for pessimistic bilevel optimization}        
\author{\small Imane Benchouk \and Lateef Jolaoso \and Khadra Nachi \and Alain Zemkoho}


\institute{Imane Benchouk \at
              Laboratory of Mathematical Analysis and Applications, University Oran 1, Algeria,             \EMAIL{benchouk.imane@univ-oran1.dz}           
           \and
          Lateef Jolaoso \at
           School of Mathematical Sciences,	University of Southampton, United Kingdom, 
           \EMAIL{l.o.jolaoso@soton.ac.uk}
           \and 
          Khadra Nachi \at
              Laboratory of Mathematical Analysis and Applications, University Oran 1, Algeria, 
              \EMAIL{nachi.khadra@univ-oran1.dz}\\
        (Part of the work was completed while this author was visiting the School of Mathematical Sciences at Southampton)
              \and 
         Alain Zemkoho \at
           School of Mathematical Sciences,	University of Southampton, United Kingdom,
           \EMAIL{a.b.zemkoho@soton.ac.uk}.
}

\date{Received: date / Accepted: date}

\maketitle

\begin{abstract}
We consider a smooth pessimistic bilevel optimization problem, where the lower-level problem is convex and satisfies the Slater constraint qualification. These assumptions ensure that the Karush-Kuhn-Tucker (KKT) reformulation of our problem is well-defined. We then introduce and study the (i) Scholtes, (ii) Lin and Fukushima, (iii) Kadrani, Dussault and Benchakroun, (iv) Steffensen and Ulbrich, and (v) Kanzow and Schwartz relaxation methods for the KKT reformulation of our pessimistic bilevel program. These relaxations have been extensively studied and compared for mathematical programs with complementatrity constraints (MPCCs). To the best of our knowledge, such a study has not been conducted for the pessimistic bilevel optimization problem, which is completely different from an MPCC, as the complemetatrity conditions are part of the objective function, and not in the feasible set of the problem. After introducing these relaxations, we provide convergence results for global and local optimal solutions, as well as suitable versions of the C- and M-stationarity points of our pessimistic bilevel optimization problem. Numerical results are also provided to illustrate the practical implementation of these relaxation algorithms, as well as some preliminary comparisons. 
${}$\\[2ex]
\noindent
Keywords: pessimistic bilevel optimization, KKT reformulation,  relaxation method, C-stationarity, M-stationarity
\\[2ex]
\noindent
MSC (2010): 90C26, 90C31, 90C33, 90C46
\end{abstract}

\section{Introduction}\label{Introduction}
Our focus in this paper is on the pessimistic bilevel optimization problem defined by 
\begin{equation}\tag{P$_p$}\label{PBP}
    \underset{x\in X}\min~\varphi_{p}(x):=\underset{y\in S(x)}\max~F(x,y),
\end{equation}
where $X:=\left\{x\in \mathbb{R}^n\left|\; G(x)\leq 0\right.\right\}$ (resp. $F\, : \mathbb{R}^n\times \mathbb{R}^m \rightarrow \mathbb{R}$) corresponds to the upper-level feasible set (resp. objective function). The upper-level constraint function $G$ is defined from $\mathbb{R}^n$ to $\mathbb{R}^p$. As for  $S: \mathbb{R}^n \rightrightarrows \mathbb{R}^m$, it corresponds to the lower-level optimal solution set-valued mapping; i.e., for all $x\in X$, 
 \begin{equation}\label{S(x)}
    S(x):=\underset{y\in Y(x)}{\arg\min}~f(x,y). 
 \end{equation}
 Note that in \eqref{S(x)}, the function $f\, : \mathbb{R}^n\times \mathbb{R}^m \rightarrow \mathbb{R}$ represents the lower-level objective function, while the set-valued mapping $Y: \mathbb{R}^n \rightrightarrows \mathbb{R}^m$ describes the lower-level feasible set that is represented by 
 \begin{equation}\label{Y(x)}
    Y(x):=\left\{y\in \mathbb{R}^m|\; g(x,y)\leq 0\right\}
 \end{equation}
with $g\, : \mathbb{R}^n\times \mathbb{R}^m \rightarrow \mathbb{R}^q$. Throughout the paper, we assume that these assumptions are valid:
\begin{itemize}
    \item $\forall x\in X:\; S(x) \neq \emptyset$;
    \item $\forall x\in X$:\; the functions $f(x, .)$ and $g_i(x, .)$, for all $i=1, \ldots, q$, are convex;
    \item $\forall x\in X,\;\; \exists y(x)\in\mathbb{R}^{m}: \;g_{i}(x,y(x))<0$ for all $i=1, \ldots, q$;
    \item the functions $F$ and $G$ (resp. $f$ and $g$) are twice (resp. thrice) continuously differentiable.
\end{itemize}
Note that the second assumption means that the lower-level problem is convex in the classical sense, while the third one is the Slater constraint qualification for the lower-level problem. 

Under these assumptions, we can write the Karush-Kuhn-Tucker (KKT) reformulation
\begin{equation}\tag{KKT}\label{MPCC}
    \underset{x\in X}\min~\psi_{p}(x):=\underset{(y, u)\in \mathcal{D}(x)}\max~F(x,y)
\end{equation}
of problem \eqref{PBP}, where the set-valued mapping $\mathcal{D} :\mathbb{R}^{n} \rightrightarrows \mathbb{R}^{m}\times\mathbb{R}^{q}$ is given by
\begin{equation}\label{KKT system}
\mathcal{D}(x) := \left\{\left. (y, u)\in \mathbb{R}^{m+q} \right|\;\; \mathcal{L}(x,y,u)=0, \;\; u\geq 0, \;\, g(x,y)\leq0, \;\, u^\top g(x, y)=0\right\}
\end{equation}
with $\mathcal{L}(x,y,u):=\nabla_{y}L(x,y,u)$, $L:(x,y,u)\rightarrow L(x,y,u):=f(x,y)+u^{T}g(x,y)$ being the Lagrangian function associated to the parametric optimization problem describing the lower-level optimal solution set-valued mapping  \eqref{S(x)}.
Problem \eqref{MPCC}, which will be the main focus of our attention in this paper, is a special class of the minmax optimization problem with coupled inner complementarity or equilibrium constraints, and corresponds to the pessmistic counterpart of the KKT reformulation that is commonly used in optimistic bilevel optimization problem to get a single-level problem; see, e.g., \cite{Au,DempeDutta2012,DempeZemkohoKKTRefNonsmooth,DempeZemkohoKKTRef}.  Hence, in the sequel, we will refer to \eqref{MPCC} as the KKT reformulation of the pessimistic bilevel optimization problem \eqref{PBP} and use the notation
\[
\mathcal{S}_{p}(x):=\left\{  (y,u)\in\mathcal{D}(x)\left\vert \;F(x,y)\geq
\psi_{p}(x)\right.  \right\} \]
to describe the optimal solution set-valued mapping of the parametric optimization problem associated to the optimal value function $\psi_p$ involved in the definition of problem  \eqref{MPCC}.

In the paper \cite{BNZ}, the authors introduced the Scholtes relaxation method for the pessimistic bilevel optimization problem \eqref{PBP}, including the theoretical convergence results as well as numerical implementations that show the potential of the method. Considering the wide range of relaxation techniques in the mathematical program with complementarity constraints (MPCC) literature, the main goal of this new work is to extend the study from \cite{BNZ} to the remaining state-of-the-art MPCC-type relaxation algorithms in the context of problem  \eqref{MPCC}. 

Precisely, based on problem \eqref{MPCC}, we introduce, study, and compare the (i) Scholtes, (ii) Lin and Fukushima, (iii) Kadrani, Dussault and Benchakroun, (iv) Steffensen and Ulbrich, and (v) Kanzow and Schwartz relaxations to our pessimistic bilevel optimization problem \eqref{PBP} in the spirit of the analysis conducted in \cite{HoheiselEtAlComparison2013}. A key difference between our work and what is done in the latter reference is that we are minimizing the optimal value function $\psi_p$, where the complementarity conditions only govern the inner feasible set, and therefore the challenge in our problem is in the objective function, unlike in \cite{HoheiselEtAlComparison2013}, where the main difficulty is in the constraint set. 
Because of this specific nature of problem \eqref{MPCC}, we pay special attention to the computation of its global and local optimal solutions, as well as stationary points, based on the different relaxations. Building on \cite{BNZ} and \cite{HoheiselEtAlComparison2013}, we construct results providing conditions ensuring that sequences of points iteratively computed via the aforementioned relaxations converge to  global optimal, local optimal, or stationary points of problem \eqref{MPCC}. 

To proceed, in the next section, we first provide some basic mathematical tools as well as constraint qualifications and stationarity concepts associated to problem \eqref{MPCC}. Subsequently, in Section \ref{sec:relaxation schemes}, we introduce a general relaxation framework that captures all the schemes considered in this paper and study some of the basic properties of the corresponding set-valued mapping \eqref{KKT system}, as well as their impact on the optimal value function $\psi_p$, which is the objective function of problem \eqref{MPCC}. 
The main theoretical contributions of the paper are given in Section \ref{sec:method and convergence}, where the generic relaxation algorithm for problem \eqref{MPCC} is introduced and conditions ensuring its convergence to global and local optimal solutions, as well as to stationary points, is established for the different schemes (namely, the (i) Scholtes, Lin and Fukushima, (ii) Kadrani, Dussault and Benchakroun, (iv) Steffensen and Ulbrich, and (v) Kanzow and Schwartz relaxations). 

In Section \ref{Numerical experiments}, we present some numerical experiments to illustrate the behavior of each of the relaxations discussed in the previous sections. Considering the challenge in finding an off-the-shelf solution algorithm for minmax optimization with coupled inner constraints that can tractably compute global and local optimal solutions of the relaxation problems, we focus our attention on computing C-stationarity points for probem \eqref{MPCC} via the relaxations introduced in Section \ref{sec:relaxation schemes}. The numerical comparisons focus on suitable test examples that are collected from the bilevel programming literature  \cite{Mitsos2006,WiesemannTsoukalasKleniatiRustem2013}. Our algorithms generate solutions similar to the ones in the literature. Furthermore, we assess the feasibility and C-stationarity, which is primarily satisfied by the solutions obtained via the Scholtes relaxation. This, coupled with other comparison factors such as the number of iterations employed by the algorithm and the execution time, indicates that the Scholtes relaxation method exhibits greater robustness and reliability compared to other methods, thus corroborating the conclusions drawn in the context   {of} MPCC problems  \cite{HoheiselEtAlComparison2013}.


To keep the presentation simple and easily accessible, we try to keep the presentation of Sections 2--5 as much compact as possible and move most of the technical proofs to the appendices.

\section{Preliminaries}\label{sec:preliminaries}
We start this section with some basic  variational analysis properties; see, e.g., \cite{BS2,RTW} for more details on these concepts.
First, given two subsets $A$ and $B$ of $\mathbb{R}^{n}$, their Hausdorff
distance $d_{H}(A,B)$ is defined by
\[
d_{H}(A,B):=\max~\{e(A,B), \;\, e(B,A)\},
\]
where the excess $e(A,B)$ of $A$ over $B$ is obtained from the formula%
\[
e(A,B):=\underset{x\in A}{\sup}~d(x,B)
\]
with $d(x,B):=\underset{y\in B}{\inf}~d(x,y)$ denoting the distance from the point $x$ to the set $B$, while considering the usual conventions
 $e(\emptyset,B)=0$ and $e(A,\emptyset)=+\infty$ if
$A\neq\emptyset$. Furthermore, let
\[
\mathrm{dom}\,\Psi:=\left\{x\in\mathbb{R}^{n}|\;\;\Psi(x)\neq\emptyset \right\} \;\, \mbox{ and }\;\, \mbox{gph}\,\Psi :=\{(x,y)\in\mathbb{R}^{n+m}|\;\;y\in\Psi(x)\}
\]
denote the domain  and graph of the set-valued mapping $\Psi :\mathbb{R}^{n}\rightrightarrows\mathbb{R}^{m}$. 

\begin{definition}
The set-valued mapping $\Psi:\mathbb{R}^{n}\rightrightarrows\mathbb{R}^{m}$
is said to be
\begin{itemize}
  \item[(i)] inner semicompact at $\bar{x}\in\mathrm{dom}\Psi$ if for any
sequence $(x_{k})_{k}\rightarrow\bar{x}$ there exists a sequence $(y_{k})$
with $y_{k}\in\Psi(x_{k})$ such that $(y_{k})$ admits a convergent
subsequence;
\item[(ii)]  inner semicontinuous at $(\bar{x},\bar
{y})\in\mbox{gph }\Psi$ if for any sequence $(x_{k})_{k}\rightarrow\bar{x}$
there exists a sequence $(y_{k})$ such that $y_{k}\in\Psi(x_{k})$ and
$y_{k}\rightarrow\bar{y}$ or equivalently, if $d(\bar{y},\Psi(x))\rightarrow0$
whenever $x\rightarrow\bar{x}$;
\item[(iii)] lower semicontinuous at $\bar{x}
\in\mathrm{dom}\Psi$ (in Hausdorff sense) if, for every $\varepsilon
>0$, there exists a neighborhood $U$ of the point $\bar{x}$ such that for any $x\in U$,
\[
e(\Psi(\bar{x}),\Psi(x))<\varepsilon \;\, \mbox{ or equivalently } \;\, \Psi(\bar{x})\subset\Psi(x)+\varepsilon\boldsymbol{B}_{\mathbb{R}^{m}}
\]
 with $\boldsymbol{B}_{\mathbb{R}^{n}}$  denotes the unit ball in $\mathbb{R}^{n}$ centered at the origin;
\item[(iv)] upper semicontinuous at $\bar{x}\in\mathrm{dom}\Psi$
(in Hausdorff sense) if, for every $\varepsilon>0$, there exists a
neighborhood $U$ of the point $\bar{x}$ such that for any $x\in U$,
\[
e(\Psi(x),\Psi(\bar{x}))<\varepsilon  \;\, \mbox{ or equivalently } \;\,\Psi(x)\subset\Psi(\bar{x})+\varepsilon\boldsymbol{B}_{\mathbb{R}^{m}}.
\]
\end{itemize}
\end{definition}

In the sequel, we will also use the concept of Painlev\'{e}--Kuratowski \emph{outer/upper} and \emph{inner/lower} limit for a set-valued mapping $\Psi :\mathbb{R}^n \rightrightarrows \mathbb{R}^m$ at a point $\bar x$, which are respectively defined by
\[
\begin{array}{rcl}
\underset{x \longrightarrow \bar x}\limsup~\Psi(x) &:=& \left\{y\in \mathbb{R}^m |\;\;\exists x_k \rightarrow \bar x, \;\, \exists y_k \rightarrow y \;\,            \mbox{ with } \;y_k\-\in \Psi(x_k)\; \mbox{ for all }\; k  \right\},\\[2ex]
\underset{x \longrightarrow \bar x}\liminf~\Psi(x)&:=&\left\{y\in \mathbb{R}^m |\;\;\forall x_k   \rightarrow \bar x, \;\, \exists y_k \rightarrow y \;\,   
\mbox{ with } \;y_k\-\in \Psi(x_k)\; \mbox{ for all }\; k \, \mbox{   {sufficiently large}} \right\}.
\end{array}
\]
  {
When we have $\underset{x \longrightarrow \bar x}\limsup~\Psi(x) = \underset{x \longrightarrow \bar x}\liminf~\Psi(x)$, we say that the Painlev\'{e}--Kuratowski limit  exists at $\bar x$ and in this case, we can write $\underset{x \longrightarrow \bar x}\lim~\Psi(x) :=\underset{x \longrightarrow \bar x}\limsup~\Psi(x) = \underset{x \longrightarrow \bar x}\liminf~\Psi(x)$.}


Throughout the paper, stationarity concepts tailored to problem \eqref{PBP} will also play a key role. A point $\bar{x}$ is said to be a C-stationary point for \eqref{PBP} if there exists a vector $\left(\bar{y},\bar{u}, \alpha,\beta,\gamma\right)$ such that 
	\begin{eqnarray}
	(\bar{x}, \bar{y},\bar{u})\in \mbox{gph}\mathcal{S}_{p},\label{St0}\\
	\nabla_{x}F(\bar{x},\bar{y})+\sum\limits_{i=1}^{p}\alpha_{i}\nabla G_{i}
	(\bar{x})+\sum\limits_{l=1}^{m}\beta_{l}\nabla_{x}\mathcal{L}_{l}(\bar{x}
	,\bar{y},\bar{u})+\sum\limits_{i=1}^{q}\gamma_{i}\nabla_{x}g_{i}(\bar{x}
	,\bar{y})=0,\label{St3}\\
	\nabla_{y}F(\bar{x},\bar{y})+\sum\limits_{l=1}^{m}\beta_{l}\nabla
	_{y}\mathcal{L}_{l}(\bar{x},\bar{y},\bar{u})+\sum\limits_{i=1}^{q}\gamma
	_{i}\nabla_{y}g_{i}(\bar{x},\bar{y})=0,\label{St4}\\[1ex]
	\alpha\geq0,\;\; G(\bar{x})\leq 0, \;\; \alpha^{T}G(\bar{x})=0,\label{St5}\\[1.5ex]
	\nabla_{y}g_{\nu}(\bar{x},\bar{y})\beta=0,  \;\; \gamma_{\eta}=0,\label{St6}\\
	\forall i\in\theta:\;\, \gamma_{i}\sum\limits_{l=1}^{m}\beta_{l}\nabla_{y_{l}}g_{i}(\bar{x},\bar{y})\geq0,\label{St8}
	\end{eqnarray}
	where $\gamma_{\eta}:=(\gamma_{i})_{i\in\eta}$,  $\nabla_{y}g_{v}(\bar{x},\bar{y})\beta$ denotes the vector with components $\sum\limits_{l=1}^{m}\beta_{l}\nabla_{y_{l}}g_{i}(\bar{x},\bar{y})$ for $i\in v$,
	and the index sets $\eta$, $\theta$, and $\nu$ are respectively defined as follows:
	\begin{align}
	\eta &  :=\eta(\bar{x},\bar{y},\bar{u}):=\left\{  i\in\left\{
	1,\ldots,q\right\}:\;\; \bar{u}_{i}=0,\text{ }g_{i}(\bar{x},\bar{y})<0\right\},\label{eq}\\
	\theta &  :=\theta(\bar{x},\bar{y},\bar{u}):=\left\{  i\in\left\{
	1,\ldots,q\right\}: \;\;\bar{u}_{i}=0,\text{ }g_{i}(\bar{x},\bar{y})=0\right\},\label{eqq}\\
	\nu &  :=\nu(\bar{x},\bar{y},\bar{u}):=\left\{  i\in\left\{  1,\ldots,q\right\}
	:\;\; \bar{u}_{i}>0,\text{ }g_{i}(\bar{x},\bar{y})=0\right\}.\label{eqqq}
	\end{align}
Similarly, we can define the M- and S-stationarity by respectively replacing condition \eqref{St8} by
\begin{eqnarray}
    \forall i\in\theta:\;\, (\gamma_{i}<0\wedge\sum\limits_{l=1}^{m}\beta
_{l}\nabla_{y_{l}}g_{i}(\bar{x},\bar{y})<0)\vee\gamma_{i}\sum\limits_{l=1}%
^{m}\beta_{l}\nabla_{y_{l}}g_{i}(\bar{x},\bar{y})=0,\label{S7b}\\
\forall i\in\theta:\;\, \text{ \ }\gamma_{i}\leq0\wedge\sum\limits_{l=1}^{m}%
\beta_{l}\nabla_{y_{l}}g_{i}(\bar{x},\bar{y})\leq0. \label{9b}
\end{eqnarray}
These concepts were introduced and studied in \cite{D2}. Further stationarity concepts for pessimistic bilevel optimization and relevant details can be found in the latter reference.

Next, we just recall a result, which provides the framework ensuring that a local optimal solution of problem \eqref{PBP} satisfies the C-stationarity conditions. To proceed, we need to introduce some assumptions. First, the upper-level regularity condition will be said to hold at $\bar{x}$ if
\begin{equation}\label{UMFCQ}
\left.
\begin{array}{r}
\nabla G(\bar{x})^\top \alpha =0\\
\alpha\geq 0, \;\; \alpha^\top G(\bar{x})=0
\end{array}
\right\}
\;\; \Longrightarrow  \;\; \alpha =0.
\end{equation}
Similarly, the lower-level regularity condition will be said to hold at $(\bar{x},\bar{y})$ if we have
\begin{equation}\label{LMFCQ}
\left.
\begin{array}{r}
\nabla_y g(\bar{x},\bar{y})^\top \beta =0\\
\beta\geq 0, \;\; \beta^\top g(\bar{x},\bar{y})=0
\end{array}
\right\}
\;\; \Longrightarrow  \;\; \beta =0.
\end{equation}
Obviously, these regularity   {conditions} correspond to the Mangasarian-Fromowitz constraint qualification for the upper- and lower-level constraints, respectively.
For the remaining qualification conditions, we introduce the C-qualification conditions, which are respectively defined at the point $(\bar{x},\bar{y},\bar
{u})$ by 
\begin{align}
(\beta,\gamma)  &  \in\Lambda^{ec}(\bar{x},\bar{y},\bar{u},0)\implies
\beta=0,\gamma=0,\medskip \label{CQMM-1}\tag{A$_{1}^{c}$}\\
(\beta,\gamma)  &  \in\Lambda_{y}^{ec}(\bar{x},\bar{y},\bar{u},0)\implies
\nabla_{x}\mathcal{L}(\bar{x},\bar{y},\bar{u})^{T}\beta+\nabla_{x}g(\bar
{x},\bar{y})^{T}\gamma=0,\medskip  \label{CQMM-2}\tag{A$_{2}^{c}$}\\
(\beta,\gamma)  &  \in\Lambda_{y}^{ec}(\bar{x},\bar{y},\bar{u},0)\implies
\beta=0,\gamma=0,\text{ }  \label{CQMM-3}\tag{A$_{3}^{c}$}
\end{align}
with the C-multiplier set $\Lambda^{ec}(\bar{x},\bar{y},\bar{u},0)$ resulting from setting $v=0$ in the set-valued mapping
\begin{equation}\label{C-multipliers set}
\Lambda^{ec}(\bar{x},\bar{y},\bar{u},v) := \left\{(\beta,\gamma)\in\mathbb{R}^{m+q}\left|\;\;
\begin{array}{l}
\nabla_{y}g_{v}(\bar{x},\bar{y})\beta=0,\;\; \gamma_{\eta}=0\\[1ex]
\forall i\in\theta: \;\;\gamma_{i}(\nabla_{y}g_{i}(\bar{x},\bar{y}))\beta\geq 0\\[1ex]
v+\nabla_{x,y}\mathcal{L}(\bar{x},\bar{y},\bar{u})^\top
\beta+  {\nabla g(\bar{x},\bar{y})^{T}\gamma}=0
\end{array}
\right.\right\},
\end{equation}
while $\Lambda^{ec}_y(\bar{x},\bar{y},\bar{u},0)$ is obtained similarly by replacing the derivatives w.r.t. $(x, y)$ in the last equation in the set \eqref{C-multipliers set} by the derivatives of the same functions w.r.t. $y$ only. 
%
%
Clearly, we have $\eqref{CQMM-1} \impliedby \eqref{CQMM-3} \implies \eqref{CQMM-2}$. 
Similarly to the $M$- and $S$-stationarity concepts introduced above, we can define $M$- and $S$-qualification conditions.

\begin{theorem}[\cite{D2}]\label{Th21Sect2} Let the point $\bar{x}$ be a local optimal solution to problem 
	\eqref{PBP}, which is assumed to be upper-level regular. Suppose that the set-valued mapping $\mathcal{S}_p$ is inner
	semicontinuous at $(\bar{x},\bar{y},\bar{u})$, where \eqref{CQMM-1} and \eqref{CQMM-2} are also assumed to hold. Then there exists $\left(\alpha,\beta,\gamma\right)$  such that conditions \eqref{St0}--\eqref{St8} are satisfied.
\end{theorem}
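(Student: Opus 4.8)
The plan is to regard $\bar{x}$ as a local minimizer of the optimal value (marginal) function $\psi_p$ over the upper-level feasible set $X$, and then to transport the resulting abstract first-order condition through the complementarity structure of the KKT mapping $\mathcal{D}$ in \eqref{KKT system}. Since $\bar{x}$ locally minimizes $\psi_p$ on $X=\{x:\,G(x)\leq 0\}$, the limiting-subdifferential form of Fermat's rule gives
\[
0\in\partial\psi_p(\bar{x})+N_X(\bar{x}),
\]
where $N_X$ denotes the limiting normal cone. The upper-level regularity condition \eqref{UMFCQ} is precisely the Mangasarian--Fromovitz condition for $X$ at $\bar{x}$; it both legitimizes the normal-cone sum rule and yields $N_X(\bar{x})=\{\nabla G(\bar{x})^\top\alpha:\,\alpha\geq 0,\ \alpha^\top G(\bar{x})=0\}$. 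This already delivers the multiplier $\alpha$ together with the upper-level block \eqref{St5}.

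The core of the argument is an upper estimate of $\partial\psi_p(\bar{x})$. As $\psi_p(x)=\max_{(y,u)\in\mathcal{D}(x)}F(x,y)$ is a supremal marginal function whose solution map $\mathcal{S}_p$ is, by hypothesis, inner semicontinuous at $(\bar{x},\bar{y},\bar{u})$, I would invoke the marginal-function subdifferential calculus. For the fixed maximizer $(\bar{y},\bar{u})\in\mathcal{S}_p(\bar{x})$ (which also secures \eqref{St0}) this produces
\[
\partial\psi_p(\bar{x})\subseteq\nabla_xF(\bar{x},\bar{y})+D^*\mathcal{D}\big(\bar{x},(\bar{y},\bar{u})\big)\big(\nabla_yF(\bar{x},\bar{y}),\,0\big),
\]
where $D^*\mathcal{D}$ is the coderivative of the KKT mapping and the trailing $0$ reflects the absence of $u$ from $F$. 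Inner semicontinuity is exactly what permits fixing a single maximizer rather than working with a union over the whole solution set.

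It then remains to compute $D^*\mathcal{D}(\bar{x},(\bar{y},\bar{u}))$ and to read off the multipliers. Writing $\mathcal{D}$ as the intersection of the smooth stationarity equation $\mathcal{L}(x,y,u)=0$ with the complementarity system $\{u\geq0,\ g(x,y)\leq0,\ u^\top g(x,y)=0\}$, I would apply the coderivative sum/chain rules. The qualification conditions \eqref{CQMM-1} and \eqref{CQMM-2} act as the constraint qualifications that keep this calculus valid: \eqref{CQMM-1}, via the multiplier set \eqref{C-multipliers set} with $v=0$, excludes nontrivial singular multipliers (the Mordukhovich-type criterion), while \eqref{CQMM-2} reconciles the partial $x$- and $y$-coderivative estimates so that the passage to the $x$-variable is consistent. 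The equation $\mathcal{L}=0$ contributes the term $\nabla_{x,y}\mathcal{L}(\bar{x},\bar{y},\bar{u})^\top\beta$, fixing the role of $\beta$ and, after separation, the stationarity equations \eqref{St3}--\eqref{St4} (with \eqref{St3} absorbing $\alpha$). The limiting normal cone to the graph of the complementarity relation, evaluated through the index partition $\eta,\theta,\nu$ of \eqref{eq}--\eqref{eqqq}, imposes the remaining restrictions on $\gamma$: the inactive indices force $\gamma_\eta=0$ and the strictly active indices force $\nabla_yg_\nu(\bar{x},\bar{y})\beta=0$, giving \eqref{St6}, while the biactive indices $\theta$ contribute precisely the C-type product inequality \eqref{St8}. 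Collecting these yields \eqref{St0}--\eqref{St8}.

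The step I expect to be the main obstacle is the explicit evaluation of the coderivative of the complementarity part: one must describe the limiting normal cone to the graph of $(y,u)\mapsto\{u\geq0,\ g\leq0,\ u^\top g=0\}$, carry out the case analysis over the three index sets, and verify that \eqref{CQMM-1}--\eqref{CQMM-2} are strong enough to validate the calculus yet weak enough to leave only the C-condition \eqref{St8} on the biactive block, rather than the sharper M- or S-conditions \eqref{S7b}--\eqref{9b}. Careful bookkeeping of inequality directions and of the order in which the sum rule is applied is where the delicate work lies.
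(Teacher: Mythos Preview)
The paper does not supply its own proof of this theorem; it is quoted verbatim as a result of \cite{D2} and the exposition moves on immediately afterwards. Consequently there is nothing in the present paper to compare your proposal against.

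That said, your outline is precisely the variational-analytic route taken in \cite{D2}: Fermat's rule for $\psi_p$ on $X$, the MFCQ representation of $N_X(\bar{x})$ via upper-level regularity \eqref{UMFCQ}, the marginal-function subdifferential bound under inner semicontinuity of $\mathcal{S}_p$, and then the explicit coderivative of the KKT map $\mathcal{D}$ through the index decomposition $\eta,\theta,\nu$, with \eqref{CQMM-1}--\eqref{CQMM-2} serving as the calculus qualifications. One caveat worth flagging: because $\psi_p$ is a \emph{max}-type value function, the subdifferential estimate you write requires care with signs (one typically passes through $-\psi_p$ or uses the upper subdifferential), and this is where the orientation of the product condition \eqref{St8} on the biactive set $\theta$ is ultimately fixed. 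You already anticipate this in your final paragraph, so the plan is sound.
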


\section{Relaxation schemes and general framework}\label{sec:relaxation schemes}
In this section, we consider $\mathcal{R}\in \left\{\mbox{S}, \mbox{LF}, \mbox{KDB}, \mbox{SU}, \mbox{KS}\right\}$, where S, LF, KDB, SU, and KS respectively represents the Scholtes, Lin and Fukushima, Kadrani, Dussault and Benchakroun, Steffensen and Ulbrich, and Kanzow and Schwartz relaxation of the KKT reformulation \eqref{MPCC} of our pessimistic bilevel optimization problem \eqref{PBP}; see \cite{HoheiselEtAlComparison2013} for the definitions of these concepts in the context of the general MPCC literature. The $\mathcal{R}$--relaxation problem associated to \eqref{MPCC} can be written as 
\begin{equation}\tag{KKT$^t_\mathcal{R}$}\label{KKT-RG}
\underset{x\in X}{\min}~\psi_{\mathcal{R}}^{t}(x):=\underset{(y,u)\in\mathcal{D}^{t}_{\mathcal{R}}(x)}{\max}F(x,y),
\end{equation}
where, for all $t>0$,  $\mathcal{D}^{t}_{\mathcal{R}}$ represents the $\mathcal{R}$--relaxation of the set-valued map $\mathcal{D}$ in \eqref{KKT system}; i.e., 
\begin{equation}\label{Dt}
\mathcal{D}^{t}_{\mathcal{R}}(x):=\left\{(y,u)\in\mathbb{R}^{m+q}\left\vert\,
\mathcal{L}(x,y,u)=0, \;\, \phi^t_{i, \mathcal{R}}(x, y, u)\leq 0, \;\; i=1, \ldots, q \right.\right\},
\end{equation}
where for all $t>0$ and $i=1, \ldots, q$, the function $\phi^t_{i, \mathcal{R}}$ is defined by
\begin{equation}
    \phi^t_{i, \mathcal{R}}(x, y, u):=\left\{ 
    \begin{array}{lll}
       \left(\begin{array}{cc}
            g_i(x,y)  \\
            -u_i\\
            -u_i g_i(x,y)-t
       \end{array} \right)  &  \mbox{ if } & \mathcal{R}:=S,\\[1.5ex]
\left(\begin{array}{cc}
           -(u_{i}g_{i}(x,y)+t^{2}) \\[1ex]
-(u_{i}+t)(-g_{i}(x,y)+t)+t^{2}
       \end{array} \right)  &  \mbox{ if } & \mathcal{R}:=LF,\\[3ex]
\left(\begin{array}{cc}
            g_i(x,y) -t  \\
            -u_i - t\\
            -(u_i -t)(g_i(x,y) + t)
       \end{array} \right)  &  \mbox{ if } & \mathcal{R}:=KDB,\\[3ex]
 \left(\begin{array}{cc}
            g_i(x,y)  \\ 
            -u_i\\
            \varphi^t_{i, SU}(x,y,u)
       \end{array} \right)  &  \mbox{ if } & \mathcal{R}:= SU,\\[3ex]
 \left(\begin{array}{cc}
            g_i(x,y)  \\
            -u_i\\
             \varphi^t_{i, KS}(x,y,u)
       \end{array} \right)  &  \mbox{ if } & \mathcal{R}:=KS.
    \end{array}
    \right.
\end{equation}
For $t>0$ and $i=1, \ldots, q$, the function $\varphi^t_{i, SU}$ is given by
\[
\varphi^t_{i, SU}(x,y,u):=\left\{
\begin{tabular}
[c]{lll}
$2u_{i}$ & if & $g_{i}(x,y)+u_{i}\leq-t$,\\
$-2g_{i}(x,y)$ & if & $g_{i}(x,y)+u_{i}\geq t$,\\
$u_{i}-g_{i}(x,y)-t\theta\left(\dfrac{u_{i}+g_{i}(x,y)}{t}\right)$ & if & $\left\vert
u_{i}+g_{i}(x,y)\right\vert <t,$
\end{tabular}
\right.
\]
where $\theta(\cdot)$ is a suitable regularization function (see details in \cite{SU}) 
and $\varphi^t_{i, KS}$ is defined by
\[
\varphi^t_{i, KS}(x,y,u):=\left\{
\begin{tabular}
[c]{ll}
$(u_{i}-t)(-g_{i}(x,y)-t)$ & $\text{if\;\; } u_{i}-g_{i}(x,y)\geq2t$,\\[1ex]
$-\dfrac{1}{2}\left((u_{i}-t)^{2}+(-g_{i}(x,y)-t)^{2}\right)$ & $\text{\ if\;\; }u_{i}
-g_{i}(x,y)<2t.$
\end{tabular}
\right.
\]

Throughout the paper, we assume that for all $t>0$, $x\in \mathbb{R}^n$, and
$\mathcal{R}\in \left\{\mbox{S}, \mbox{LF}, \mbox{KDB}, \mbox{SU}, \mbox{KS}\right\}$, the set  $\mathcal{D}^{t}_{\mathcal{R}}(x)$ is nonempty. Moreover, we use the notation
\[
\mathcal{S}_{\mathcal{R}}^{t}(x):=\left\{  (y,u)\in\mathcal{D}^{t}_{\mathcal{R}}(x)\left\vert
\;F(x,y)\geq\psi_{\mathcal{R}}^{t}(x)\right.  \right\}
\]
to describe the optimal solution set-valued mapping of the parametric optimization problems associated to the optimal value function $\psi^t_{\mathcal{R}}$.
An illustration of the calculation of the function $\psi_{\mathcal{R}}^{t}$ and the set-valued mappings $\mathcal{D}^{t}_{\mathcal{R}}$ and $\mathcal{S}_{\mathcal{R}}^{t}$, for $t>0$, is given in \cite{BNZ} in the case where $\mathcal{R}:=\mbox{S}$.
The following result summarizes some basic properties of the set-valued mapping $\mathcal{D}_{\mathcal{R}}^{t}$ as $t$ and $\mathcal{R}$ vary; see Appendix \ref{A} for the proof. 

\begin{proposition}\label{lem} For any $x\in\mathbb{R}^{n}$, it holds that:
	\begin{enumerate}

\item[$(a)$] $\mathcal{D}(x)$ and  $\mathcal{D}^{t}_{\mathcal{R}}(x)$ are closed for all $t > 0$ and $\mathcal{R}\in \left\{\mbox{S}, \mbox{LF}, \mbox{KDB}, \mbox{SU}, \mbox{KS}\right\}$;

\item[$(b)$] $\mathcal{D}^{t_{1}}_{\mathcal{R}}(x)\subset\mathcal{D}^{t_{2}}_{\mathcal{R}}(x)$ for any
$t_{2}>t_{1}>0$ and $\mathcal{R}\in \left\{\mbox{S},  \mbox{SU}, \mbox{KS}\right\}$;

\item[$(c)$] $\mathcal{D}(x) = \underset{t>0}{\cap}\mathcal{D}^{t}_{\mathcal{R}}(x)$ for all $\mathcal{R}\in \left\{\mbox{S}, \mbox{LF},  \mbox{SU}, \mbox{KS}\right\}$; and for $\mathcal{R}=\mbox{KDB}$, it holds that 
$\underset{t>0}{\cap}\mathcal{D}_{KDB}^{t}(x)\subset\mathcal{D}(x)$, while 
for any sequence $(t_{k})\downarrow0$ and $(y,u)\in\mathcal{D}(x)$ with 
$(u_{i},g_{i}(x,y))\neq(0,0)$ for all $i=1, \ldots, q$, there exists $k_{0}\in\mathbb{N}$ such that $(y,u)\in\underset{k\geq k_{0}}{\cap}\mathcal{D}_{KDB}^{t_{k}}(x)$;

\item[$(d)$] $e\left(\mathcal{D}(x),\mathcal{D}^{t}_{\mathcal{R}}(x)\right)=0$ for all $t>0$ and $\mathcal{R}\in \left\{\mbox{S}, \mbox{LF}, \mbox{SU}, \mbox{KS}\right\}$;

 \item[$(e)$] $\mathcal{D}(x) = \underset{t\downarrow0}{\lim}\mathcal{D}^{t}_{\mathcal{R}}(x)$ in the Painlev\'{e}-Kuratowski sense  for all $\mathcal{R}\in \left\{\mbox{S}, \mbox{LF}, \mbox{SU}, \mbox{KS}\right\}$, while for $\mathcal{R}=\mbox{KDB}$, it holds that    {$\underset{(t,x')\rightarrow (0^+,x)}{\lim\sup}
\mathcal{D}_{KDB}^{t}(x')\subset\mathcal{D}(x)$}.
\end{enumerate}
\end{proposition}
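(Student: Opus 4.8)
The plan is to treat the five assertions in the order that minimizes duplicated work, exploiting that the equality constraint $\mathcal{L}(x,y,u)=0$ is common to $\mathcal{D}(x)$ and every $\mathcal{D}^t_{\mathcal{R}}(x)$, so it is either inherited verbatim in the inclusions or handled once by continuity. For part $(a)$, each $\mathcal{D}^t_{\mathcal{R}}(x)$ is the preimage of the closed set $\{0\}\times(-\infty,0]^{3q}$ (two blocks per index for LF) under $(y,u)\mapsto(\mathcal{L}(x,y,u),(\phi^t_{i,\mathcal{R}})_i)$, so closedness reduces to continuity of the defining functions. For S, LF, KDB these are polynomials in the smooth data, hence continuous; the only point needing attention is the piecewise definition of $\varphi^t_{i,SU}$ and $\varphi^t_{i,KS}$, where I would check that the branches agree on the interfaces $|u_i+g_i|=t$ (resp.\ $u_i-g_i=2t$), e.g.\ for KS both expressions equal $-(g_i+t)^2$ there.

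Parts $(d)$ and $(c)$ I would prove together. Part $(d)$ asserts $e(\mathcal{D}(x),\mathcal{D}^t_{\mathcal{R}}(x))=0$, which, since $\mathcal{D}^t_{\mathcal{R}}(x)$ is closed, is equivalent to $\mathcal{D}(x)\subset\mathcal{D}^t_{\mathcal{R}}(x)$. I would establish this by substituting an arbitrary complementary point (with $g_i\le 0$, $u_i\ge 0$, $u_ig_i=0$) into each relaxed inequality and verifying it for every $t>0$: for S and LF this is a one-line check using $u_ig_i=0$ and $u_i-g_i\ge 0$, while for SU and KS one selects the branch according to whether $u_i=0$ or $g_i=0$, the SU case invoking the majorization property $\theta(s)\ge|s|$ of the regularizer from \cite{SU}. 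Given $(d)$, the inclusion $\mathcal{D}(x)\subset\bigcap_{t>0}\mathcal{D}^t_{\mathcal{R}}(x)$ in $(c)$ is immediate for S, LF, SU, KS; the reverse inclusion follows by fixing $(y,u)\in\bigcap_{t>0}\mathcal{D}^t_{\mathcal{R}}(x)$ and letting $t\downarrow 0$ in the defining inequalities. For S one reads off $u_ig_i\ge 0$, which with $g_i\le 0$, $u_i\ge 0$ forces $u_ig_i=0$; for LF one must combine both inequalities, $u_ig_i\ge -t^2$ and $t(u_i-g_i)\ge u_ig_i$, to extract simultaneously $u_ig_i=0$ and the sign conditions $u_i\ge 0$, $g_i\le 0$.

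The KDB scheme is the exceptional case and I would handle it separately. First, $\bigcap_{t>0}\mathcal{D}^t_{KDB}(x)\subset\mathcal{D}(x)$ because $g_i\le t$ and $u_i\ge -t$ for all $t>0$ already give $g_i\le 0$, $u_i\ge 0$, while $(u_i-t)(g_i+t)\ge 0$ fails for small $t$ whenever both $u_i>0$ and $g_i<0$, forcing $u_ig_i=0$. The reverse inclusion breaks at the biactive corner: if $(u_i,g_i)=(0,0)$ then $(u_i-t)(g_i+t)=-t^2<0$, so such a complementary point lies in no $\mathcal{D}^t_{KDB}(x)$ — this is exactly why the hypothesis $(u_i,g_i(x,y))\ne(0,0)$ is imposed, and under it one checks for each $i$ that $(u_i-t)(g_i+t)>0$ once $t<u_i-g_i(x,y)$ (positive under the hypothesis), which yields the claimed $k_0$. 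The same passage to the limit, now also letting $x'\to x$ and using continuity of $g$ and $\mathcal{L}$, gives the outer-limit inclusion $\limsup_{(t,x')\to(0^+,x)}\mathcal{D}^t_{KDB}(x')\subset\mathcal{D}(x)$ in $(e)$.

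Finally, for the monotonicity $(b)$ (claimed only for S, SU, KS) and the Painlev\'e--Kuratowski limit $(e)$ in the schemes S, LF, SU, KS: since the sign and equality constraints are $t$-independent, $(b)$ reduces to showing the sublevel sets $\{\varphi^t_{i,\mathcal{R}}\le 0\}$ are nondecreasing in $t$; for S this is transparent as the third inequality is $u_ig_i\ge -t$, whereas for SU and KS it is a branch-by-branch verification that the admissible region in the $(u_i,g_i)$-plane expands as $t$ grows, with SU again governed by the properties of $\theta$. For $(e)$, the outer inclusion $\limsup_{t\downarrow 0}\mathcal{D}^t_{\mathcal{R}}(x)\subset\mathcal{D}(x)$ is the limiting argument of $(c)$, while $\mathcal{D}(x)\subset\liminf_{t\downarrow 0}\mathcal{D}^t_{\mathcal{R}}(x)$ is free: by $(d)$ every $(y,u)\in\mathcal{D}(x)$ lies in each $\mathcal{D}^t_{\mathcal{R}}(x)$, so the constant sequence is an admissible recovery sequence, and with $\liminf\subset\limsup$ this pins down the limit. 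I expect the main obstacle to be the piecewise schemes SU and KS: establishing continuity $(a)$, the inclusion $(d)$, and especially the nestedness $(b)$ all demand careful case distinctions across branches, with SU additionally depending on the abstract regularizer $\theta$, whereas the only conceptual subtlety elsewhere is the non-nested behaviour of KDB at the biactive corner.
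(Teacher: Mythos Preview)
Your proposal is correct and follows essentially the same strategy as the paper: closedness via continuity of the defining functions, the inclusion $\mathcal{D}(x)\subset\mathcal{D}^t_{\mathcal{R}}(x)$ as the engine for both $(d)$ and one half of $(c)$ and $(e)$, the reverse inclusions by letting $t\downarrow 0$ in the relaxed constraints, and the KDB biactive-corner obstruction handled exactly as you describe via $t<u_i-g_i(x,y)$. The only organizational difference is that the paper derives $(d)$ \emph{from} $(c)$ rather than the other way round, which is cosmetic. For the piecewise schemes there are minor tactical differences: for KS in $(b)$ and $(c)$ the paper appeals to ready-made monotonicity and NCP-function lemmas from \cite{KS} (the map $t\mapsto\varphi(a-t,b-t)$ is decreasing; $\varphi(a,b)>0$ iff $a,b>0$) rather than doing the branch-by-branch check you outline, and for SU in $(b)$ and $(c)$ it cites the analogous properties of $\phi^t$ from \cite{SU,Hoheisel2010}. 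Your direct verifications would go through as well, at the cost of a few more lines; the paper's route is shorter but imports the case analysis from the literature.
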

In what follows, we show that properties (b), (c), and (d) are not generally satisfied for the $\mbox{KDB}$-relaxation. 
\begin{example}
Consider the scenario of problem \eqref{PBP} defined  with 
\begin{equation}\label{ex0pb}
F(x,y):=y, \;\; X:=[0, \, 1], \;\; K(x):=[0, \, 1], \;\; \mbox{ and }\;\; f(x,y):= xy.
\end{equation}
Observe that  any $\bar{x}\in\left]  0,1\right]$ is optimal for  \eqref{ex0pb}.
  {The KDB-relaxed problem  can be written  for $x \in ]0,1] $ and $ t>0$ sufficiently small $\left(\text{namely},\; 0 < t < \dfrac{x}{2}\right)$  with }   
\[
\mathcal{D}_{KDB}^{t}(x)=\left\{(y,(u_{1},u_{1}-x))\in\mathbb{R}^{3}\,\left\vert\;
x-t\leq u_{1}\leq x+t\wedge -t\leq y\leq t\right.\right\}.
\]
In the case where $x = 0$, we obtain the following for any $0<t\leq 1/2$: 
\[
\mathcal{D}_{KDB}^{t}(0)=\left\{  (y,(u_{1},u_{1}))\in\mathbb{R}
^{3}\,\ \ \left\vert
\begin{array}
[c]{c}
-t\leq u_{1} < t\wedge t\leq y\leq 1- t\\
\vee\\
u_{1} =t \wedge -t\leq y\leq 1+ t
\end{array}
\right.  \right\}.
\]
Clearly, inclusion $\mathcal{D}_{KDB}^{1/4}(0) \subset \mathcal{D}_{KDB}^{1/2}(0)$ does not hold and similarly, inclusion $\mathcal{D}(0) \subset \mathcal{D}_{KDB}^{1/2}(0)$ does not hold either because  $\mathcal{D}(0)=[0, \, 1]\times \left\{(0,0) \right\}$. And for property (d), we have 
$e\left(\mathcal{D}(0),\mathcal{D}^{1/2}_{KDB}(0)\right)=\frac{1}{2}$. \hfill \qed 
\end{example}
The behavior of  property (b) in the context $\mathcal{R} := \mbox{LF}$  is still unclear, and will be studied in a separate work.

For the next result, we introduce the concept of optimal solution that we use throughout the paper for problems \eqref{MPCC} and \eqref{KKT-RG} for $t>0$ and $\mathcal{R}\in \left\{\mbox{S}, \mbox{LF}, \mbox{KDB}, \mbox{SU}, \mbox{KS}\right\}$. 
A point $\bar{x}\in X$ will be said to be a local optimal solution for \eqref{MPCC}
(resp. \eqref{KKT-RG}) if there exists a neighborhood $U$ of $\bar{x}$ such that 
\begin{equation}\label{OptimalSolDef-New}
\forall x\in X\cap U: \;\psi_{p}(\bar{x})\leq\psi_{p}(x) \left(\mbox{resp.}\;\,\psi_{\mathcal{R}}^{t}(\bar{x})\leq\psi_{\mathcal{R}}^{t}(x)\right)
\end{equation}
for $t>0$ and $\mathcal{R}\in \left\{\mbox{S}, \mbox{LF}, \mbox{KDB}, \mbox{SU}, \mbox{KS}\right\}$. 
Similarly, $\bar{x}\in X$ will be said to be a global optimal solution for \eqref{MPCC}
(resp. \eqref{KKT-RG})  if \eqref{OptimalSolDef-New} holds with $U=\mathbb{R}^n$.
Let us start here by showing that the objective function of problem \eqref{KKT-RG} for $t:=t_k$ can converge to that of problem \eqref{MPCC} as $t_k\downarrow 0$ with $k\rightarrow\infty$.
  {
\begin{theorem} \label{fprop}
Let $\mathcal{R}\in \left\{\mbox{S}, \mbox{LF}, \mbox{SU}, \mbox{KS}\right\}$ then 
\begin{enumerate}
\item[$(a)$]  For all $x\in \mathbb{R}^n$ and $t>0$, $\psi_p(x) \leq \psi^t_{\mathcal{R}}(x)$;
\item[$(b)$] If the function  $t\mapsto\psi_{\mathcal{R}}^{t}(x)$ is  upper semicontinuous at $0^+$ for any $x\in \mathbb{R}^n$ and
	 $(t_{k})\downarrow0$, then for any $x\in\mathbb{R}^{n}$, we have
	$\psi_{\mathcal{R}}^{t_{k}}(x)\rightarrow\psi_{p}(x)$ as $k\rightarrow\infty$.
\end{enumerate}
\end{theorem}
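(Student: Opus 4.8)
The plan is to reduce both assertions to the set-inclusion and set-convergence properties of $\mathcal{D}^{t}_{\mathcal{R}}$ recorded in Proposition \ref{lem}, combined with the elementary monotonicity of the supremum of $F(x,\cdot)$ under set inclusion. Throughout, I fix $x\in\mathbb{R}^{n}$ and keep in mind that $\psi_{p}(x)=\sup_{(y,u)\in\mathcal{D}(x)}F(x,y)$ and $\psi^{t}_{\mathcal{R}}(x)=\sup_{(y,u)\in\mathcal{D}^{t}_{\mathcal{R}}(x)}F(x,y)$.

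For assertion $(a)$, I would invoke property $(c)$ of Proposition \ref{lem}, which yields $\mathcal{D}(x)=\bigcap_{t>0}\mathcal{D}^{t}_{\mathcal{R}}(x)$ for every $\mathcal{R}\in\left\{\mbox{S},\mbox{LF},\mbox{SU},\mbox{KS}\right\}$; in particular $\mathcal{D}(x)\subseteq\mathcal{D}^{t}_{\mathcal{R}}(x)$ for each $t>0$. Taking the supremum of $F(x,\cdot)$ over the smaller feasible set $\mathcal{D}(x)$ can only decrease its value relative to the supremum over $\mathcal{D}^{t}_{\mathcal{R}}(x)$, whence $\psi_{p}(x)\leq\psi^{t}_{\mathcal{R}}(x)$. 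Property $(d)$ would serve equally well, since $e(\mathcal{D}(x),\mathcal{D}^{t}_{\mathcal{R}}(x))=0$ together with the closedness of $\mathcal{D}^{t}_{\mathcal{R}}(x)$ from $(a)$ again forces the inclusion. No compactness is required for this part.

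For assertion $(b)$, I fix a sequence $(t_{k})\downarrow 0$. The lower estimate is immediate from part $(a)$: since $\psi^{t_{k}}_{\mathcal{R}}(x)\geq\psi_{p}(x)$ for every $k$, we obtain $\liminf_{k\to\infty}\psi^{t_{k}}_{\mathcal{R}}(x)\geq\psi_{p}(x)$. It remains to establish the matching upper estimate $\limsup_{k\to\infty}\psi^{t_{k}}_{\mathcal{R}}(x)\leq\psi_{p}(x)$. Here I would read the upper semicontinuity hypothesis with the natural convention $\psi^{0}_{\mathcal{R}}(x):=\psi_{p}(x)$, which is consistent with property $(c)$ (equivalently $(e)$) identifying $\mathcal{D}(x)$ as the limit of $\mathcal{D}^{t}_{\mathcal{R}}(x)$ as $t\downarrow 0$. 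Upper semicontinuity of $t\mapsto\psi^{t}_{\mathcal{R}}(x)$ at $0^{+}$ then reads precisely as $\limsup_{t\downarrow 0}\psi^{t}_{\mathcal{R}}(x)\leq\psi^{0}_{\mathcal{R}}(x)=\psi_{p}(x)$; applying this along $(t_{k})$ delivers the upper bound, and combining the two estimates forces $\lim_{k\to\infty}\psi^{t_{k}}_{\mathcal{R}}(x)=\psi_{p}(x)$.

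The step I expect to be delicate, and the reason upper semicontinuity is imposed as a hypothesis rather than derived, is exactly this upper estimate. Set convergence $\mathcal{D}^{t_{k}}_{\mathcal{R}}(x)\to\mathcal{D}(x)$ from property $(e)$ does not by itself transfer to convergence of the suprema of $F(x,\cdot)$: a sequence of near-maximizers $(y_{k},u_{k})\in\mathcal{D}^{t_{k}}_{\mathcal{R}}(x)$ may fail to admit a convergent subsequence when the relaxed feasible sets are unbounded, leaving a possible gap $\sup_{(y,u)\in\bigcap_{t>0}\mathcal{D}^{t}_{\mathcal{R}}(x)}F<\inf_{t>0}\sup_{(y,u)\in\mathcal{D}^{t}_{\mathcal{R}}(x)}F$ that property $(c)$ alone cannot close. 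In the monotone cases $\mathcal{R}\in\left\{\mbox{S},\mbox{SU},\mbox{KS}\right\}$, property $(b)$ makes $t\mapsto\psi^{t}_{\mathcal{R}}(x)$ nondecreasing, so the right limit exists and equals $\inf_{t>0}\psi^{t}_{\mathcal{R}}(x)\geq\psi_{p}(x)$, yet even there the equality with $\psi_{p}(x)$ is precisely what upper semicontinuity supplies; for $\mathcal{R}=\mbox{LF}$, where no such monotonicity is available, the hypothesis is indispensable. Consequently the argument is uniform across the four relaxations, with all the analytic content of the upper bound absorbed into the stated upper semicontinuity assumption.
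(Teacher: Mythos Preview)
Your proposal is correct and follows essentially the same approach as the paper: part $(a)$ via the inclusion $\mathcal{D}(x)\subset\mathcal{D}^{t}_{\mathcal{R}}(x)$ from Proposition~\ref{lem}(c), and part $(b)$ by sandwiching $\liminf$ and $\limsup$ between $\psi_{p}(x)$ using $(a)$ for the lower bound and the upper semicontinuity hypothesis (with the convention $\psi^{0}_{\mathcal{R}}(x)=\psi_{p}(x)$) for the upper bound. Your closing paragraph on why upper semicontinuity must be assumed rather than derived is additional commentary not present in the paper's terse proof, but it does not alter the argument.
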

\begin{proof}
Let $\mathcal{R}\in \left\{\mbox{S}, \mbox{LF},  \mbox{SU}, \mbox{KS}\right\}$. From Proposition \ref{lem}(c), $\mathcal{D}(x) \subset\mathcal{D}^{t}_{\mathcal{R}}(x)$ for any $x\in\mathbb{R}^{n}$ and $t>0$; hence, Theorem \ref{fprop}(a) holds. 
Consider now a decreasing sequence $(t_{k})$ to $0$  as well as $x\in\mathbb{R}^{n}$.
Since the sequence $(\psi_\mathcal{R}^{t_{k}}(x))$ is bounded from below by
$\psi_{p}(x)$ and the function $t\mapsto\psi_{\mathcal{R}}^{t}(x)$ is  upper semicontinuous at $0^+$, one has 
\begin{equation*}
\psi_{p}(x)\leq\underset{k\rightarrow\infty}{\lim \inf}\,\psi_\mathcal{R}^{t_{k}}(x)\leq \underset{k\rightarrow\infty}{\lim \sup}\,\psi_\mathcal{R}^{t_{k}}(x)\leq\psi_{p}(x) 
\end{equation*}
so that the assertion in Theorem \ref{fprop}(b) is satisfied. \hfill \qed 
\end{proof}}

The next example shows that assertion (1) in Theorem \ref{fprop} is not necessarily satisfied for $\mathcal{R} =\mbox{KDB}$.
\begin{example}
Consider a scenario of problem \eqref{PBP} defined by
\begin{equation}
F(x,y):=x+y,\text{ \ }X:=[0,1],\text{ \ }Y(x):=\left]  -\infty,1\right]
,\text{ and \ \ }f(x,y):=-xy. \label{e1}
\end{equation}
Then $\psi_{p}(x)=x+1$ given that 
\[
\mathcal{D}(x)=\left\{
\begin{array}{ll}
\left\{ (1,x) \right\} & \text{ \ \ if \ \ }x\neq0,\\
\left]  -\infty,1\right]  \times\left\{  0\right\} & \text{ \ \  if \ \ }x=0.
\end{array}
\right.  
\]
We obtain for the KDB-relaxation that 
$$\mathcal{D}^{t}_{\text{KDB}}(x)=\left\{
\begin{array}
[c]{c}
\left[1-t,1+t\right]\times\left\{   x \right\}  \text{ \ \ \ \ \  if \ \ }x\neq0\\
\left]  -\infty,1-t\right]  \times\left\{  0\right\}  \text{ \ \ \ \ \ \ \   if \ \ }x=0
\end{array}
\right.  
\, \, \, \text{and} \,\ \, \psi_{KDB}
^{t}(x)=\left\{
\begin{array}
[c]{l}
x+1+t\text{ \ \ \ \ if } \;\,x\in\left]  0,1\right] \\
1-t\text{\ \ \ \ \ \ \ \ \ \ \ if }\;\, x=0
\end{array}
\right. $$
for any $t>0$ sufficiently small. So that $ \psi_{\text{KDB}}
^{t}(0)=1-t < \psi_{p}(0)$. \hfill \qed 
\end{example}


\section{Method and convergence analysis}\label{sec:method and convergence}
We start this section by introducing the $\mathcal{R}$-relaxation algorithm in Algorithm~\ref{algorithm1-S} for the KKT reformulation \eqref{MPCC} of the pessimistic bilevel optimization problem \eqref{PBP}. 
Note that a global/local optimal solution of problem \eqref{MPCC}  is equivalent to a global/local optimal solution of problem \eqref{PBP} under mild assumptions \cite{Au}.
\begin{algorithm}[H]
\caption{$\mathcal{R}$--relaxation method for pessimistic bilevel optimization}
\label{algorithm1-S}
\begin{algorithmic}
\STATE \textbf{Step 0}: Choose $\mathcal{R}\in \left\{\mbox{S}, \mbox{LF}, \mbox{KDB}, \mbox{SU}, \mbox{KS}\right\}$, $t^0>0$, and set  $k:=0$.
\STATE \textbf{Step 1}: Solve problem \eqref{KKT-RG} for $t:=t^k$.
\STATE \textbf{Step 2}: Select $0<t^{k+1}<t^k$, set $k:=k+1$, and go to Step 1.
\end{algorithmic}
\end{algorithm}
${}$\\[-7.1ex]

It is important to emphasize that Step 1   {of Algorithm \ref{algorithm1-S}} consists of computing a solution of the corresponding $\mathcal{R}$-relaxation of problem \eqref{MPCC} globally, locally, or just a stationarity point in a sense that will be defined later. Our primary goal in this section is to establish that for a given relaxation, a sequence of   {points} computed from Algorithm~\ref{algorithm1-S} converges to global/local optimal solution, or a stationary point of \eqref{MPCC} as $t^k\downarrow 0$, under suitable assumptions.
First, we give the convergence of Algorithm \ref{algorithm1-S} when problem \eqref{KKT-RG} is solved globally. 
\begin{theorem}\label{sg}
Let $(t_{k})\downarrow0 $
	and $(x^{k})$ be a sequence such that the point $x^{k}$ is a global optimal
	solution of problem \eqref{KKT-RG} for $t:=t_k$. If $x^{k}\rightarrow\bar{x}$ as
	$k\rightarrow\infty$, then the point $\bar{x}$ is a global optimal solution of problem \eqref{MPCC} provided that one of the following assumptions holds:
 \begin{itemize}
     \item[$(a)$] If $\mathcal{R}\in \left\{\mbox{S}, \mbox{LF}, \mbox{SU}, \mbox{KS}\right\}$, the function $x \mapsto \psi_{p}(x)$ is lower semicontinuous at $\bar{x}$ and the function $t\mapsto\psi_{\mathcal{R}}^{t}(x)$ is upper
	semicontinuous at $0^+$ for any $x\in \mathbb{R}^n$.  
 \item[$(b)$] If $\mathcal{R}=KDB$ and for any $x\in \mathbb{R}^n$, the function $(t,u)\mapsto\psi_{\mathcal{R}}^{t}(u)$ is lower semicontinuous at  $(0^+,x) $ and the function $t\mapsto\psi_{\mathcal{R}}^{t}(x)$ is upper semicontinuous at $0^+$. 
 \end{itemize}
\end{theorem}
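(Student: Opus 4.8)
The plan is to prove both parts through one sandwich argument: I fix an arbitrary competitor $x\in X$ and aim to establish $\psi_p(\bar x)\le\psi_p(x)$, which is precisely global optimality of $\bar x$ for \eqref{MPCC}. First I would record that $\bar x\in X$: each $x^k$ is feasible for \eqref{KKT-RG}, hence $G(x^k)\le 0$, and since $G$ is continuous the set $X=\{x:\,G(x)\le 0\}$ is closed, so the limit $\bar x$ also lies in $X$. Global optimality of $x^k$ for the relaxed problem with $t:=t_k$ then supplies the central inequality
\[
\psi_{\mathcal{R}}^{t_k}(x^k)\le\psi_{\mathcal{R}}^{t_k}(x)\qquad\text{for every }x\in X\text{ and every }k,
\]
and everything reduces to bounding the left-hand side from below by $\psi_p(\bar x)$ and the right-hand side from above by $\psi_p(x)$ as $k\to\infty$.

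For case $(a)$ the lower bound on the left is free: Theorem \ref{fprop}$(a)$ gives $\psi_p(x^k)\le\psi_{\mathcal{R}}^{t_k}(x^k)$, hence $\psi_p(x^k)\le\psi_{\mathcal{R}}^{t_k}(x)$. Passing to the $\liminf$ and using lower semicontinuity of $\psi_p$ at $\bar x$ on the left, together with Theorem \ref{fprop}$(b)$ (which, under upper semicontinuity of $t\mapsto\psi_{\mathcal{R}}^t(x)$ at $0^+$, yields $\psi_{\mathcal{R}}^{t_k}(x)\to\psi_p(x)$) on the right, produces
\[
\psi_p(\bar x)\le\liminf_{k\to\infty}\psi_p(x^k)\le\liminf_{k\to\infty}\psi_{\mathcal{R}}^{t_k}(x)=\psi_p(x).
\]
As $x\in X$ was arbitrary, $\bar x$ is a global optimal solution of \eqref{MPCC}.

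For case $(b)$ the obstruction is that Theorem \ref{fprop}$(a)$ fails for the KDB scheme (as the example following it shows), so the free bound $\psi_p\le\psi_{KDB}^t$ is unavailable. Instead I would use the two joint semicontinuity hypotheses directly, after noting that at $t=0$ the KDB constraints $g_i\le 0$, $u_i\ge 0$, $-u_i g_i\le 0$ force $u_i g_i=0$, whence $\mathcal{D}^{0}_{KDB}(x)=\mathcal{D}(x)$ and $\psi_{KDB}^{0}(x)=\psi_p(x)$; this is the base value against which the semicontinuity assumptions are to be read. Lower semicontinuity of $(t,u)\mapsto\psi_{KDB}^{t}(u)$ at $(0^+,\bar x)$, applied along $(t_k,x^k)\to(0,\bar x)$, gives $\psi_p(\bar x)\le\liminf_k\psi_{KDB}^{t_k}(x^k)$, while upper semicontinuity of $t\mapsto\psi_{KDB}^{t}(x)$ at $0^+$, applied along $t_k\downarrow 0$, gives $\limsup_k\psi_{KDB}^{t_k}(x)\le\psi_p(x)$. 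Chaining these with the central inequality then closes the sandwich:
\[
\psi_p(\bar x)\le\liminf_{k\to\infty}\psi_{KDB}^{t_k}(x^k)\le\liminf_{k\to\infty}\psi_{KDB}^{t_k}(x)\le\limsup_{k\to\infty}\psi_{KDB}^{t_k}(x)\le\psi_p(x),
\]
again for arbitrary $x\in X$.

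The routine ingredients are the closedness of $X$ and the elementary $\liminf$/$\limsup$ manipulations; the delicate point is case $(b)$, where one cannot lean on the monotonicity and inclusion facts of Proposition \ref{lem}$(b)$--$(d)$ or on Theorem \ref{fprop}$(a)$, all of which are known to break down for KDB. There the entire argument rests on correctly identifying $\psi_{KDB}^{0}=\psi_p$ as the reference value and on orienting the two one-sided semicontinuity inequalities so that the lower bound lands on $\psi_p(\bar x)$ and the upper bound on $\psi_p(x)$, which is exactly what makes the sandwich collapse to the desired conclusion.
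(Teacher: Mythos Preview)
Your proposal is correct and follows essentially the same sandwich argument as the paper: in case $(a)$ both you and the paper chain Theorem~\ref{fprop}(a) with global optimality of $x^k$, then invoke lower semicontinuity of $\psi_p$ at $\bar x$ on the left and upper semicontinuity of $t\mapsto\psi_{\mathcal R}^t(x)$ at $0^+$ on the right; in case $(b)$ both bypass Theorem~\ref{fprop}(a) and apply the two joint semicontinuity hypotheses directly to the inequality $\psi_{KDB}^{t_k}(x^k)\le\psi_{KDB}^{t_k}(x)$. Your version is slightly more explicit in verifying $\bar x\in X$ and in identifying $\psi_{KDB}^{0}=\psi_p$ as the reference value for the semicontinuity assumptions, points the paper leaves implicit.
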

\begin{proof}
 (a) As $x^{k}\in X$ is a global optimal solution of \eqref{KKT-RG} for $t:=t_k$, from assertion (a) of Theorem \ref{fprop}, 
	\[
	\psi_{p}(x^{k})\leq\psi_{\mathcal{R}}^{t_{k}}(x^{k})\leq\psi_{\mathcal{R}}^{t_{k}}(x)\;\, \mbox{ for all }\;\, x\in X.
	\]
	 Fix $x\in X$. Since $t\mapsto \psi_{\mathcal{R}}^{t}(x)$ is upper semicontinuous at $0^+$ and $\psi_{p}$ is lower semicontinuous at $\bar{x}$, 
	\begin{equation*}
	\psi_{p}(\bar{x})\leq\text{ }\underset{k\rightarrow\infty}{\lim\inf}\text{ }\psi
	_{p}(x^{k})\leq\text{ }\underset{k\rightarrow\infty}{\lim\inf}\text{ }\psi_{\mathcal{R}}^{t_{k}
	}(x)\leq\text{ }\underset{k\rightarrow\infty}{\lim\sup}\text{ }\psi_{\mathcal{R}}^{t_{k}}
	(x)\leq\psi_{p}(x).
	\end{equation*}
(b) Let $x\in X$.	As $x^{k}\in X$ is a
	global optimal solution of problem \eqref{KKT-RG} for $t:=t_k$, we have
	\[
	\psi_{KDB}^{t_{k}}(x^{k})\leq\psi_{KDB}^{t_{k}}(x)\;\, \mbox{ for all }\;\, x\in X.
	\]
	The assumptions of lower and upper semicontinuity lead to 
	\begin{equation*}
	\psi_{p}(\bar{x})\leq \text{ }\underset{k\rightarrow\infty}{\lim\inf}\text{ }\psi_{KDB}^{t_{k}
	}(x^{k})\leq\text{ }\underset{k\rightarrow\infty}{\lim\sup}\text{ }\psi_{KDB}^{t_{k}}
	(x)\leq\psi_{p}(x),
	\end{equation*} 
   {which concludes that $\bar{x}$ is a global optimal solution of problem \eqref{MPCC}}. \hfill \qed
\end{proof}

If in Step 1 of Algorithm \ref{algorithm1-S}, local optimal solutions of problem \eqref{KKT-RG}, for $t>0$, are computed, then we can state the following convergence result. 
\begin{theorem}\label{sg1}
Let $(t_{k})\downarrow0$  and $(x^{k})$ be a sequence of optimal solutions of problem \eqref{KKT-RG}, for $t=t^k$, in $X\cap B(x^{k},r^{k})$, where $(r^{k})$ is a real sequence such that $r^{k}>\bar{r}>0$ for all $k$. The point $\bar{x}$ is a local optimal solution of \eqref{MPCC} provided that $x^{k} \rightarrow\bar{x}$ as $k\rightarrow+\infty$ and one of the following conditions holds:
 \begin{itemize}
     \item[$(a)$] $\mathcal{R}\in \left\{\text{S}, \text{LF}, \text{SU}, \text{KS}\right\}$, the function $x\mapsto\psi_{p}(x)$ is lower semicontinuous at  $\bar{x}$ and the function $t\mapsto\psi_{\mathcal{R}}^{t}(x)$ is upper semicontinuous at $0^+$ for any $x \in \mathbb{R}^n$.  
 \item[$(b)$]  $\mathcal{R}=\text{KDB}$ and  for any $x\in \mathbb{R}^n$, the function  $(t,u)\mapsto\psi_{\mathcal{R}}^{t}(u)$ is lower semicontinuous at  $(0^+,x)$ and the function  $t\mapsto\psi_{\mathcal{R}}^{t}(x)$ is upper semicontinuous at  $0^+$ for any $x\in \mathbb{R}^n$.
 \end{itemize}
\end{theorem}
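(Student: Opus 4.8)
The plan is to mirror the argument used for the global case in Theorem \ref{sg}, the only genuinely new ingredient being the conversion of the moving local--optimality neighborhoods $B(x^{k},r^{k})$ into a single fixed neighborhood of the limit point $\bar{x}$. Recall that, by hypothesis, each $x^{k}$ minimizes $\psi^{t_{k}}_{\mathcal{R}}$ over $X\cap B(x^{k},r^{k})$, so that $\psi^{t_{k}}_{\mathcal{R}}(x^{k})\leq\psi^{t_{k}}_{\mathcal{R}}(x)$ for every $x\in X\cap B(x^{k},r^{k})$; the difficulty is that these feasible comparison sets depend on $k$ and are centered at the shifting points $x^{k}$ rather than at $\bar{x}$. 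Once I exhibit a fixed neighborhood $U$ of $\bar{x}$ that is eventually contained in each $B(x^{k},r^{k})$, the proof reduces to the limiting estimates already carried out in Theorem \ref{sg}.

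The main obstacle is precisely this stabilization of the neighborhoods, and it is here that the uniform lower bound $r^{k}>\bar{r}>0$ is essential. I would set $U:=B(\bar{x},\bar{r}/2)$. Since $x^{k}\rightarrow\bar{x}$, there is an index $K$ such that $\|x^{k}-\bar{x}\|<\bar{r}/2$ for all $k\geq K$; for any such $k$ and any $x\in U$ the triangle inequality gives $\|x-x^{k}\|\leq\|x-\bar{x}\|+\|\bar{x}-x^{k}\|<\bar{r}<r^{k}$, hence $x\in B(x^{k},r^{k})$. Consequently $X\cap U\subset X\cap B(x^{k},r^{k})$ for all $k\geq K$, and the local optimality of $x^{k}$ yields $\psi^{t_{k}}_{\mathcal{R}}(x^{k})\leq\psi^{t_{k}}_{\mathcal{R}}(x)$ for every $x\in X\cap U$. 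I would emphasize that without the uniform bound $\bar{r}$ the radii $r^{k}$ could collapse and no such fixed $U$ would exist, which is why the hypothesis $r^{k}>\bar{r}>0$ cannot be dropped.

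It then remains to pass to the limit exactly as in Theorem \ref{sg}, now on the fixed set $X\cap U$. For case (a), with $\mathcal{R}\in\{\mbox{S},\mbox{LF},\mbox{SU},\mbox{KS}\}$, Theorem \ref{fprop}(a) gives $\psi_{p}(x^{k})\leq\psi^{t_{k}}_{\mathcal{R}}(x^{k})$, so combining with the local--optimality inequality and then invoking the lower semicontinuity of $\psi_{p}$ at $\bar{x}$ and the upper semicontinuity of $t\mapsto\psi^{t}_{\mathcal{R}}(x)$ at $0^{+}$ leads, for each fixed $x\in X\cap U$, to
\[
\psi_{p}(\bar{x})\leq\underset{k\rightarrow\infty}{\liminf}\,\psi_{p}(x^{k})\leq\underset{k\rightarrow\infty}{\liminf}\,\psi^{t_{k}}_{\mathcal{R}}(x)\leq\underset{k\rightarrow\infty}{\limsup}\,\psi^{t_{k}}_{\mathcal{R}}(x)\leq\psi_{p}(x).
\]
For case (b) with $\mathcal{R}=\mbox{KDB}$, where Theorem \ref{fprop}(a) is unavailable, I would instead use the lower semicontinuity of $(t,u)\mapsto\psi^{t}_{KDB}(u)$ at $(0^{+},\bar{x})$ together with $(t_{k},x^{k})\rightarrow(0^{+},\bar{x})$ to obtain $\psi_{p}(\bar{x})\leq\underset{k\rightarrow\infty}{\liminf}\,\psi^{t_{k}}_{KDB}(x^{k})$, and then close the chain via the local--optimality inequality and the upper semicontinuity at $0^{+}$, giving $\psi_{p}(\bar{x})\leq\psi_{p}(x)$ for all $x\in X\cap U$. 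In either case this is precisely the definition \eqref{OptimalSolDef-New} of a local optimal solution of \eqref{MPCC} with neighborhood $U$, which completes the argument.
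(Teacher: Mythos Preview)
Your proposal is correct and follows essentially the same approach as the paper: fix the neighborhood $U=B(\bar{x},\bar{r}/2)$, use the triangle inequality together with the uniform lower bound $r^{k}>\bar{r}$ to ensure $X\cap U\subset X\cap B(x^{k},r^{k})$ for large $k$, and then repeat the limiting chain from Theorem \ref{sg} on this fixed neighborhood. The paper's own proof is terser but proceeds identically in both cases (a) and (b).
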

\begin{proof}
 (a) Let $x\in X\cap B(\bar{x},\frac{\bar{r}}{2} )$. Since  $\bar{x}$ is a limit point of a subsequence of $(x^{k})$, then for all $k$ sufficiently large, $x^{k}\in X\cap B(\bar{x},\frac{\bar{r}}{2})$. Hence, $x\in X\cap B(x^{k},r^{k} )$ and it holds that
	\[
	\psi_{p}(x^{k})\leq\psi_{\mathcal{R}}^{t_{k}}(x^{k})\leq\psi_{\mathcal{R}}^{t_{k}}(x)
	\]
	since $x^{k}$ is an optimal solution of problem \eqref{KKT-RG} for $\mathcal{R}\in \left\{\mbox{S}, \mbox{LF}, \mbox{SU}, \mbox{KS}\right\}$. Given that  the function $x\mapsto\psi_{p}(x)$ is lower semicontinuous at  $\bar{x}$ and  $t\mapsto\psi_{\mathcal{R}}^{t}(x)$ is upper semicontinuous at $0^+$ for any $x \in \mathbb{R}^n$, it holds that 
	\begin{equation*}
	\psi_{p}(\bar{x})\leq\text{ }\underset{k\rightarrow\infty}{\lim\inf}\text{ }\psi
	_{p}(x^{k})\leq\text{ }\underset{k\rightarrow\infty}{\lim\inf}\text{ }\psi_{\mathcal{R}}^{t_{k}
	}(x)\leq\text{ }\underset{k\rightarrow\infty}{\lim\sup}\text{ }\psi_{\mathcal{R}}^{t_{k}
	}(x)\leq \psi_{\mathcal{R}}^{0
	}(x)=\text{ }\psi_{p}(x).
	\end{equation*} 
(b) For the KDB-relaxation,  similarly for any $x\in X\cap B(\bar{x},\frac{\bar{r}}{2} )$ and $k$ sufficiently large,  we have 
	\[
	\psi_{\text{KDB}}^{t_{k}}(x^{k})\leq\psi_{\text{KDB}}^{t_{k}}(x)
	\]
	since $x^{k}$ is an optimal solution of of problem \eqref{KKT-RG} for $\mathcal{R}=KDB$. Consequently from the assumptions, 
	\begin{equation*}
	\psi_{p}(\bar{x})\leq\text{ }\underset{k\rightarrow\infty}{\lim\inf}\text{ }\psi
	^{t_{k}}_{KDB}(x^{k})\leq\text{ }\underset{k\rightarrow\infty}{\lim\sup}\text{ }\psi_{KDB}^{t_{k}
	}(x)\text{ }\leq \psi_{KDB}^{0
	}(x)=\text{ }\psi_{p}(x),
	\end{equation*} 
 which concludes the proof of the result. \hfill \qed
\end{proof}

\begin{remark}
Note that Theorems \ref{fprop}, \ref{sg}, and \ref{sg1} rely on one or more of the following semicontinuity properties: the lower semicontinuity of the function $(t,u)\mapsto\psi_{\mathcal{R}}^{t}(u)$ (resp. $x \mapsto \psi_{p}(x)$) at $(0^+,x)$ for all $x\in \mathbb{R}^n$ (resp. at $\bar{x}$) and the upper semicontinuity of the function $t\mapsto\psi_{\mathcal{R}}^{t}(x)$  at $0^+$ for any $x\in \mathbb{R}^n$. A detailed analysis of these assumptions is conducted in \cite{BNZ} for $\mathcal{R}:=\mathcal{S}$ to establish problem data-based sufficient conditions ensuring the fulfillment of these conditions. This analysis can easily be extended to construct tractable sufficient conditions for these properties, which are tailored to the cases where $\mathcal{R}\in  \left\{\mbox{LF}, \, \mbox{KDB},\, \mbox{SU},\, \mbox{KS}\right\}$. 
\end{remark}

We conclude this section by analyzing the case where in Step 1 of Algorithm \ref{algorithm1-S},  stationary points of problem \eqref{KKT-RG} for $t:=t_k$, are computed. The aim is to construct a framework ensuring that the resulting sequence converges to a stationary point of problem \eqref{MPCC}. To proceed, we introduce the  necessary optimality conditions for problem \eqref{KKT-RG} for $\mathcal{R}\in \left\{\mbox{S}, \mbox{LF}, \mbox{KDB}, \mbox{SU}, \mbox{KS}\right\}$. For a fixed number $t>0$, a point $x^t$ is said to \emph{satisfy the necessary optimality conditions} (or \emph{be a stationary point}) for problem \eqref{KKT-RG} if there exists a vector $(y^{t},u^{t}, \alpha^{t}, \beta^{t}, \lambda^{t})$ such that the following relationships are satisfied:
\begin{eqnarray}
(x^{t},y^{t},u^{t})\in\mbox{gph}\mathcal{S}^{t}_\mathcal{R}, \label{Er0S*}\\
\nabla_{x}F(x^{t},y^{t})+\nabla G(x^{t})^{\top}\alpha^{t}-\nabla
_{x}\mathcal{L}(x^{t},y^{t},u^{t})^{\top}\beta^{t}-\sum\limits_{i=1}
^{q} \lambda_{i}^{t} \nabla_{x}
\phi^t_{i, \mathcal{R}}(x^{t},y^{t},u^{t})=0,\label{Er1S*}\\
\nabla_{y}F(x^{t},y^{t})-\nabla_{y}\mathcal{L}(x^{t},y^{t},u^{t})^{\top}
\beta^{t}-\sum\limits_{i=1}^{q}  \lambda_{i}^{t}  \nabla_{y}
\phi^t_{i, \mathcal{R}}(x^{t},y^{t},u^{t})=0,\\
\forall i=1, \ldots, q:\;\, -\nabla_{y}g_{i}(x^{t},y^{t})\beta^{t}+\nabla_{u}
\phi^t_{i, \mathcal{R}}(x^{t},y^{t},u^{t})=0,\\[1ex]
\forall j=1, \ldots, p: \;\; \alpha_{j}^{t} \geq 0,\;\; G_j(x^t)\leq 0, \;\; \alpha_{j}^{t}G_{j}(x^{t})=0,\label{Er011*}\\
\forall i=1, \ldots, q:\;\; \lambda^t_i\geq 0, \;\; \phi^t_{i, \mathcal{R}}(x^{t},y^{t},u^{t}) \leq 0, \;\; \lambda^t_i \phi^t_{i, \mathcal{R}}(x^{t},y^{t},u^{t})=0.\label{Er5S*}
\end{eqnarray}
\begin{theorem}[\cite{Zemkoho}]\label{Optimality conditions for RMPCC(t)-S}For a given $t>0$ and $\mathcal{R}\in \left\{\mbox{S}, \mbox{LF}, \mbox{KDB}, \mbox{SU}, \mbox{KS}\right\}$, let the point $x^{t}$ be an upper-level regular local optimal solution for problem \eqref{KKT-RG}.
	Assume that the set-valued map $\mathcal{S}^{t}_\mathcal{R}$ is inner
	semicontinuous at $(x^{t},y^{t},u^{t})\in\mathrm{gph}\,\mathcal{S}^{t}_\mathcal{R}$ and
	the following qualification condition holds at $(x^{t},y^{t},u^{t})$:
	\begin{equation}
	\left.
	\begin{array}
	[c]{r}
	\nabla_{y}\mathcal{L}(x^{t},y^{t},u^{t})^{\top}\beta^{t}+\sum\limits_{i=1}
	^{q} \lambda^t_{i}\nabla_{y}
\phi^t_{i, \mathcal{R}}(x^{t},y^{t},u^{t}) =0\\
	\nabla_{y}g_{i}(x^{t},y^{t})\beta^{t}-\lambda^t_{i}\nabla_{u}
\phi^t_{i, \mathcal{R}}(x^{t},y^{t},u^{t})=0\\
	\lambda^t_i\geq 0, \;\;  \lambda^t_i \phi^t_{i, \mathcal{R}}(x^{t},y^{t},u^{t}) =0
	\end{array}
	\right\}  \Longrightarrow\left\{
	\begin{array}
	[c]{l}
	\beta^t=0_{m},\\
	\lambda^t=0_{q}.
	\end{array}
	\right. \label{CQS}
	\end{equation}
	Then $x^t$ is a stationary point for problem \eqref{KKT-RG}; i.e., there exists a vector $(\alpha^{t}, \beta^{t}, \lambda^{t})$ such that the optimality conditions  \eqref{Er0S*}--\eqref{Er5S*} are satisfied.
	\end{theorem}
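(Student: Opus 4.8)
The plan is to read \eqref{KKT-RG} as a pessimistic min--max program in which the inner feasible set $\mathcal{D}^{t}_{\mathcal{R}}(x)$ is now described purely by the smooth system $\mathcal{L}(x,y,u)=0$, $\phi^{t}_{i,\mathcal{R}}(x,y,u)\leq 0$ ($i=1,\dots,q$), with no residual complementarity; the differentiability of $F,G,f,g$ together with the (piecewise) smoothness of each $\phi^{t}_{i,\mathcal{R}}$ at the reference triple supplies the regularity of these data. In this reading the general pessimistic stationarity result of \cite{Zemkoho} applies, with the constraint function $g$ of \eqref{PBP} replaced by $(\mathcal{L},\phi^{t}_{1,\mathcal{R}},\dots,\phi^{t}_{q,\mathcal{R}})$, inner semicontinuity imposed on $\mathcal{S}^{t}_{\mathcal{R}}$ rather than on $\mathcal{S}_p$, and \eqref{CQS} playing the role of the C-qualification conditions \eqref{CQMM-1}--\eqref{CQMM-2}. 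Concretely, I would obtain \eqref{Er0S*}--\eqref{Er5S*} by splitting the conditions into an inner and an outer part.

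For the inner part, the $y$- and $u$-equations together with the sign and complementarity relations \eqref{Er5S*} are exactly the Karush--Kuhn--Tucker conditions of the inner maximization $\max\{F(x^{t},y):(y,u)\in\mathcal{D}^{t}_{\mathcal{R}}(x^{t})\}$, of which $(y^{t},u^{t})$ is a maximizer by \eqref{Er0S*}. First I would check that the left-hand implication in \eqref{CQS} is precisely the Mangasarian--Fromowitz-type constraint qualification for this inner program written in the $(y,u)$-variables; under it, stationarity of the maximizer $(y^{t},u^{t})$ yields multipliers $\beta^{t}$ (for $\mathcal{L}=0$) and $\lambda^{t}\geq 0$ (for the $\phi^{t}_{i,\mathcal{R}}\leq 0$) satisfying these equations.

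For the outer part, that is the $x$-equation \eqref{Er1S*}, I would use that $x^{t}$ is a local minimizer of $\psi^{t}_{\mathcal{R}}$ over $X$, so that the nonsmooth Fermat rule gives $0\in\partial\psi^{t}_{\mathcal{R}}(x^{t})+N_{X}(x^{t})$. Inner semicontinuity of $\mathcal{S}^{t}_{\mathcal{R}}$ at $(x^{t},y^{t},u^{t})$ is what licenses the envelope-type upper estimate $\nabla_{x}F(x^{t},y^{t})-\nabla_{x}\mathcal{L}(x^{t},y^{t},u^{t})^{\top}\beta^{t}-\sum_{i}\lambda^{t}_{i}\nabla_{x}\phi^{t}_{i,\mathcal{R}}(x^{t},y^{t},u^{t})\in\partial\psi^{t}_{\mathcal{R}}(x^{t})$, carrying the very multipliers produced in the inner step. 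Upper-level regularity \eqref{UMFCQ} identifies $N_{X}(x^{t})=\{\nabla G(x^{t})^{\top}\alpha:\alpha\geq 0,\ \alpha^{\top}G(x^{t})=0\}$ and hence produces $\alpha^{t}$ together with \eqref{Er011*}; substituting the envelope estimate into the Fermat inclusion then delivers \eqref{Er1S*} and completes \eqref{Er0S*}--\eqref{Er5S*}.

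The main obstacle I anticipate is the outer sensitivity step: establishing the envelope upper estimate for the \emph{maximum}-type value function $\psi^{t}_{\mathcal{R}}$ from inner semicontinuity alone, and verifying that \eqref{CQS} is strong enough both to produce the inner multipliers and to guarantee that the estimate carries exactly those multipliers, thereby excluding degenerate or abnormal combinations. A secondary technical point is that for $\mathcal{R}\in\{\mathrm{SU},\mathrm{KS}\}$ the functions $\phi^{t}_{i,\mathcal{R}}$ are only piecewise defined, so before invoking any smooth multiplier rule one must confirm that $(x^{t},y^{t},u^{t})$ avoids their nondifferentiability seams, or otherwise replace the gradients $\nabla\phi^{t}_{i,\mathcal{R}}$ by the appropriate limiting subgradients.
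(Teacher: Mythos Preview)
The paper does not supply its own proof of this theorem; it is stated with attribution to \cite{Zemkoho} and followed by a pointer to \cite{D2,D3,DMZns2019,Zemkoho} for details. Your outline is precisely the route taken in that reference: view \eqref{KKT-RG} as a smooth min--max program, use the inner constraint qualification \eqref{CQS} (which is indeed the MFCQ for the inner feasible system $\mathcal{L}=0$, $\phi^{t}_{i,\mathcal{R}}\leq 0$ in the $(y,u)$-variables, once one notes $\nabla_{u}\mathcal{L}=\nabla_{y}g$) together with inner semicontinuity of $\mathcal{S}^{t}_{\mathcal{R}}$ to obtain a subdifferential upper estimate for $\psi^{t}_{\mathcal{R}}$, and then combine with upper-level regularity via the Fermat rule on $X$. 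So your approach matches the cited one.

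One remark on your ``secondary technical point'': the worry about nondifferentiability seams for $\mathcal{R}\in\{\mathrm{SU},\mathrm{KS}\}$ is unnecessary. By construction, $\varphi^{t}_{i,\mathrm{SU}}$ is built from a regularization function $\theta$ chosen so that the resulting map is $C^{1}$ on all of $\mathbb{R}^{n+m+q}$ for every $t>0$ (see \cite{SU}, and the paper records the explicit piecewise gradients in Appendix~\ref{B}), and likewise $\varphi^{t}_{i,\mathrm{KS}}$ is $C^{1}$ because the underlying NCP function $\varphi(a,b)$ is continuously differentiable everywhere \cite{KS}. Hence the smooth multiplier rule applies without any subgradient replacement.
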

For a detailed analysis of this result and sufficient conditions ensuring the fulfillment of the required assumptions, similarly to the case of Theorem \ref{Th21Sect2}, interested readers are referred to \cite{D2,D3,DMZns2019,Zemkoho} and references therein.

Now, in contrary to Theorem \ref{sg} (resp. Theorem  \ref{sg1}), where we assume that in Step 1 of Algorithm \ref{algorithm1-S} we are computing a global (resp. local) optimal solution for problem \eqref{KKT-RG}, we present a convergence result in the case where the subproblem instead consists of calculating stationarity points (see Appendix \ref{B} for the proof). 

\begin{theorem}\label{ConvergenceResult}
	Let $(t_{k}) \downarrow 0$ and $(x^{k})$ be a sequence such that $x^k$ is a stationary point of problem \eqref{KKT-RG} for $t:=t^k$ with $(x^{k})\rightarrow \bar{x}$ and suppose that the following condition holds: 
 \begin{equation}
	\underset{k \rightarrow
		\infty}{\lim }e\left(\mathcal{S}^{t_{k}}_\mathcal{R}(x^{k}), \;\mathcal{S}_{p}(\bar{x})\right) = 0.\label{e1}
	\end{equation}
  Assume that $\bar{x}$ is upper-level regular with  $\mathcal{S}_{p}(\bar{x})$ 
	nonempty and compact. Furthermore, let the 
conditions $(A_{1}^{m})$ and $(A_{2}^{m})$ be satisfied if $\mathcal{R}\in  \left\{\mbox{S},\, \mbox{LF}, \, \mbox{SU}\right\} $  (resp. \eqref{CQMM-1} and \eqref{CQMM-2} hold if  $\mathcal{R}\in  \left\{\mbox{KDB}, \, \mbox{KS}\right\}$) hold at all $(\bar{x},y,u)$ $\in\mathrm{gph}\mathcal{S}_{p}$. 	
	Then $\bar{x}$ is a C (resp. M)-stationary point.
\end{theorem}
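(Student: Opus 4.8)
The plan is to follow the blueprint of the relaxation-convergence analysis in \cite{HoheiselEtAlComparison2013,BNZ}, while keeping careful track of how the structure of each relaxation function $\phi^t_{i,\mathcal{R}}$ shapes the sign conditions that survive in the limit. Since each $x^k$ is a stationary point of \eqref{KKT-RG} for $t:=t_k$, there is a multiplier vector $(y^k,u^k,\alpha^k,\beta^k,\lambda^k)$ solving the system \eqref{Er0S*}--\eqref{Er5S*}. By \eqref{Er0S*} we have $(y^k,u^k)\in\mathcal{S}^{t_k}_\mathcal{R}(x^k)$, so the excess condition \eqref{e1} gives $d\big((y^k,u^k),\mathcal{S}_p(\bar{x})\big)\to0$; as $\mathcal{S}_p(\bar{x})$ is nonempty and compact, a subsequence of $(y^k,u^k)$ converges to some $(\bar{y},\bar{u})\in\mathcal{S}_p(\bar{x})$. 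This yields $(\bar{x},\bar{y},\bar{u})\in\mathrm{gph}\,\mathcal{S}_p$, i.e.\ \eqref{St0}, and fixes the index sets $\eta,\theta,\nu$ from \eqref{eq}--\eqref{eqqq} at the limit point. Note that, because the limit $(\bar{y},\bar{u})$ is not known in advance but only located inside $\mathcal{S}_p(\bar{x})$, the qualification conditions must be assumed at every $(\bar{x},y,u)\in\mathrm{gph}\,\mathcal{S}_p$.

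The crux is to show that the sequence $(\alpha^k,\beta^k,\lambda^k)$ is bounded. I would argue by contradiction: assuming $N_k:=\|(\alpha^k,\beta^k,\lambda^k)\|\to\infty$, I divide the gradient equation \eqref{Er1S*} and the two $y$- and $u$-gradient equations that follow it by $N_k$ and pass to a subsequence along which the normalized multipliers converge to a limit $(\tilde{\alpha},\tilde{\beta},\tilde{\lambda})$ of unit norm. In the limit the $\nabla F$-terms drop out (being bounded), and, regrouping the components of $\tilde{\lambda}$ attached to the $g_i$-slots of $\phi^t_{i,\mathcal{R}}$ into an effective multiplier $\tilde{\gamma}$, the $y$- and $u$-equations say that $(\tilde{\beta},\tilde{\gamma})$ is a recession element of the relevant multiplier set at $(\bar{x},\bar{y},\bar{u})$. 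The stated qualification condition --- $(A_1^m)$ for $\mathcal{R}\in\{\mbox{S},\mbox{LF},\mbox{SU}\}$ and \eqref{CQMM-1} for $\mathcal{R}\in\{\mbox{KDB},\mbox{KS}\}$ --- is exactly what forces $(\tilde{\beta},\tilde{\gamma})=0$; the homogeneous $x$-equation then reduces to $\nabla G(\bar{x})^{\top}\tilde{\alpha}=0$ with $\tilde{\alpha}\ge0$ and $\tilde{\alpha}^{\top}G(\bar{x})=0$, so upper-level regularity \eqref{UMFCQ} yields $\tilde{\alpha}=0$, contradicting $\|(\tilde{\alpha},\tilde{\beta},\tilde{\lambda})\|=1$.

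With boundedness secured, I extract convergent subsequences $\alpha^k\to\alpha$, $\beta^k\to\beta$, and assemble $\gamma$ from the $g_i$-slot components of $\lambda^k$. Passing to the limit in \eqref{Er1S*} and the $y$-gradient equation produces the stationarity identities \eqref{St3} and \eqref{St4}, while the limit of the upper-level complementarity \eqref{Er011*} gives \eqref{St5}. For $i\in\nu$ (where $\bar{u}_i>0$, $g_i=0$) and $i\in\eta$ (where $\bar{u}_i=0$, $g_i<0$), the relations \eqref{Er5S*} combined with the explicit gradients of $\phi^t_{i,\mathcal{R}}$ and the signs enforced by $\lambda^t_i\ge0$ force $\nabla_y g_\nu(\bar{x},\bar{y})\beta=0$ and $\gamma_\eta=0$ in the limit, which is \eqref{St6}. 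Here the compatibility of the $y$-only information with the full $x$-equation is supplied by the second qualification condition ($(A_2^m)$, resp.\ \eqref{CQMM-2}).

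The final and most delicate step is the sign condition on the biactive set $\theta$ (where $\bar{u}_i=g_i=0$), which is precisely what separates the two conclusions. For each $i\in\theta$ I would analyze, for fixed relaxation, the admissible sign combinations of $\gamma_i$ and $\sum_l\beta_l\nabla_{y_l}g_i(\bar{x},\bar{y})$ that can arise as $t_k\downarrow0$ from $\lambda^t_i\ge0$ and $\lambda^t_i\phi^t_{i,\mathcal{R}}=0$ together with the limiting form of $\nabla_u\phi^t_{i,\mathcal{R}}$. For the Scholtes, Lin--Fukushima, and Steffensen--Ulbrich schemes the relaxed geometry near a biactive point leaves only the product inequality $\gamma_i\sum_l\beta_l\nabla_{y_l}g_i(\bar{x},\bar{y})\ge0$, i.e.\ \eqref{St8}, so $\bar{x}$ is C-stationary; for the Kadrani--Dussault--Benchakroun and Kanzow--Schwartz schemes the sharper one-sided constraint structure forces the disjunctive condition \eqref{S7b}, so $\bar{x}$ is M-stationary. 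I expect this index-wise limiting computation, carried out separately for each of the five $\phi$-formulas, to be the main obstacle, since it is the only step that genuinely depends on the individual relaxation rather than on the generic convergence machinery.
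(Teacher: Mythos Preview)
Your overall architecture matches the paper's: extract $(y^k,u^k)\in\mathcal{S}^{t_k}_\mathcal{R}(x^k)$, use \eqref{e1} and compactness of $\mathcal{S}_p(\bar{x})$ to get $(y^k,u^k)\to(\bar{y},\bar{u})\in\mathcal{S}_p(\bar{x})$, prove multiplier boundedness by normalization and contradiction, then pass to the limit and analyze the biactive set $\theta$ relaxation by relaxation. The final C-/M-split you describe is also the right one.

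However, the boundedness argument as you state it does not go through, because you have interchanged the roles of $(A_1^{m/c})$ and $(A_2^{m/c})$. After normalizing and letting $k\to\infty$, the $y$- and $u$-equations together with the $\theta$-sign analysis only place $(-\tilde{\beta},-\tilde{\gamma})$ in the $y$-only set $\Lambda_y^{em}$ (resp.\ $\Lambda_y^{ec}$); they do \emph{not} yield membership in the full set $\Lambda^{em}$ (resp.\ $\Lambda^{ec}$), because the limiting $x$-equation still carries the term $\nabla G(\bar{x})^{\top}\tilde{\alpha}$. Consequently $(A_1)$ cannot be invoked at this stage. The paper's order is: from the $y$-equation deduce $(-\tilde{\beta},-\tilde{\gamma})\in\Lambda_y^{\bullet}(\bar{x},\bar{y},\bar{u},0)$; apply $(A_2^{\bullet})$ to obtain $\nabla_x\mathcal{L}(\bar{x},\bar{y},\bar{u})^{\top}\tilde{\beta}+\nabla_x g(\bar{x},\bar{y})^{\top}\tilde{\gamma}=0$; substitute into the $x$-equation to get $\nabla G(\bar{x})^{\top}\tilde{\alpha}=0$ and hence $\tilde{\alpha}=0$ by upper-level regularity; only now does $(-\tilde{\beta},-\tilde{\gamma})$ lie in $\Lambda^{\bullet}(\bar{x},\bar{y},\bar{u},0)$, and $(A_1^{\bullet})$ finishes. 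Your placement of $(A_2)$ in the post-boundedness limit passage is therefore also misplaced: the limit passage itself needs no qualification condition, while the boundedness step needs both, in the order $(A_2)\to$ upper-level regularity $\to(A_1)$.

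A second, smaller point: for each relaxation the paper does not work with the raw $\lambda^k$ but builds relaxation-specific recombined multipliers (e.g.\ $\tilde{\gamma}^k_i=\gamma^k_iu^k_i$ or $-(u^k_i+t_k)\delta^k_i$ for LF; $\xi^{u,k}_i,\xi^{g,k}_i$ for KDB; $\delta^{u,k}_i,\delta^{g,k}_i$ for KS), and the normalized vector whose boundedness is contradicted must include extra components (such as $\gamma^k_{\eta\cup\nu},\delta^k_{\eta\cup\nu}$ for LF) beyond $(\alpha^k,\tilde{\beta}^k,\tilde{\gamma}^k)$. After $(\tilde{\alpha},\tilde{\beta},\tilde{\gamma})=0$ is obtained, a further short argument is needed to kill these residual components; otherwise the contradiction to unit norm does not follow.
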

The next result gives some practical conditions ensuring that assumption \eqref{e1} is satisfied.
\begin{proposition} \label{cs}
Let the sequences $(t_{k})\downarrow0$ and $(x^{k}) \rightarrow \bar{x}$ be such that $\mathcal{D}(\bar{x})$ is
nonempty and compact. Let the set-valued mapping $(t,x) \rightrightarrows \mathcal{D}_\mathcal{R}^{t}(x)$ be  upper semicontinuous at $(0^+,\bar{x})$ and $x \rightrightarrows \mathcal{D}(x)$  be lower semicontinuous at $\bar{x}$. Then the property \eqref{e1} holds for any $\mathcal{R}\in \left\{\mbox{S}, \mbox{LF}, \mbox{SU}, \mbox{KS}\right\}$ and it is satisfied for $\mathcal{R} = \text{KDB}$ if in addition the following inequality holds: $\underset{k\rightarrow\infty}{\text{ }\lim\inf}\psi_{p}(x^{k})\leq \underset
{k\rightarrow\infty}{\text{ }\lim\inf}\psi_{\text{KDB}}^{t_{k}}(x^{k})$.
\end{proposition}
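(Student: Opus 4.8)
The plan is to argue by contradiction. Suppose that \eqref{e1} fails; then, after passing to a subsequence (not relabeled) and using the definition of the excess, there are a constant $\varepsilon>0$ and points $(y^{k},u^{k})\in\mathcal{S}^{t_{k}}_{\mathcal{R}}(x^{k})$ with $d((y^{k},u^{k}),\mathcal{S}_{p}(\bar{x}))\geq\varepsilon$ for all $k$. By the assumed upper semicontinuity of $(t,x)\rightrightarrows\mathcal{D}^{t}_{\mathcal{R}}(x)$ at $(0^{+},\bar{x})$, read as $\mathcal{D}^{t}_{\mathcal{R}}(x)\subset\mathcal{D}(\bar{x})+\delta\boldsymbol{B}_{\mathbb{R}^{m+q}}$ for $(t,x)$ near $(0^{+},\bar{x})$, we get for $k$ large that $(y^{k},u^{k})\in\mathcal{D}^{t_{k}}_{\mathcal{R}}(x^{k})\subset\mathcal{D}(\bar{x})+\delta\boldsymbol{B}_{\mathbb{R}^{m+q}}$. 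Since $\mathcal{D}(\bar{x})$ is compact, this set is bounded, so $(y^{k},u^{k})$ admits a further subsequence converging to some $(\bar{y},\bar{u})$; moreover $d((\bar{y},\bar{u}),\mathcal{D}(\bar{x}))=0$, and as $\mathcal{D}(\bar{x})$ is closed by Proposition \ref{lem}(a), we obtain $(\bar{y},\bar{u})\in\mathcal{D}(\bar{x})$. The whole point will then be to show $F(\bar{x},\bar{y})\geq\psi_{p}(\bar{x})$, i.e. $(\bar{y},\bar{u})\in\mathcal{S}_{p}(\bar{x})$, which contradicts $d((y^{k},u^{k}),\mathcal{S}_{p}(\bar{x}))\geq\varepsilon$ because $(y^{k},u^{k})\to(\bar{y},\bar{u})$.

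For $\mathcal{R}\in\{\mbox{S},\mbox{LF},\mbox{SU},\mbox{KS}\}$ I would exploit the inclusion $\mathcal{D}(x)\subset\mathcal{D}^{t}_{\mathcal{R}}(x)$ from Proposition \ref{lem}(c). Choose a maximizer $(\hat{y},\hat{u})\in\mathcal{D}(\bar{x})$ of $F(\bar{x},\cdot)$, so $F(\bar{x},\hat{y})=\psi_{p}(\bar{x})$ (it exists since $\mathcal{D}(\bar{x})$ is compact and $F$ continuous). The lower semicontinuity of $x\rightrightarrows\mathcal{D}(x)$ at $\bar{x}$ yields inner semicontinuity at $(\bar{x},\hat{y},\hat{u})$, hence points $(\tilde{y}^{k},\tilde{u}^{k})\in\mathcal{D}(x^{k})\subset\mathcal{D}^{t_{k}}_{\mathcal{R}}(x^{k})$ with $(\tilde{y}^{k},\tilde{u}^{k})\to(\hat{y},\hat{u})$. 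Feasibility then gives $\psi^{t_{k}}_{\mathcal{R}}(x^{k})\geq F(x^{k},\tilde{y}^{k})$, so by continuity of $F$ we get $\liminf_{k}\psi^{t_{k}}_{\mathcal{R}}(x^{k})\geq F(\bar{x},\hat{y})=\psi_{p}(\bar{x})$. Since $(y^{k},u^{k})\in\mathcal{S}^{t_{k}}_{\mathcal{R}}(x^{k})$ forces $F(x^{k},y^{k})=\psi^{t_{k}}_{\mathcal{R}}(x^{k})$, passing to the limit gives $F(\bar{x},\bar{y})=\lim_{k}\psi^{t_{k}}_{\mathcal{R}}(x^{k})\geq\psi_{p}(\bar{x})$, closing the contradiction.

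The main obstacle is the $\mbox{KDB}$ case, where the inclusion $\mathcal{D}(x)\subset\mathcal{D}^{t}_{\mathcal{R}}(x)$ (and hence Theorem \ref{fprop}(a)) is no longer available, as illustrated by the example following Theorem \ref{fprop}; this is exactly why the extra hypothesis $\liminf_{k}\psi_{p}(x^{k})\leq\liminf_{k}\psi^{t_{k}}_{\mbox{KDB}}(x^{k})$ is imposed. Here I would first observe that the very same lower-semicontinuity-of-$\mathcal{D}$ construction (applied now to a maximizer of $\psi_{p}(\bar{x})$, without invoking any inclusion into $\mathcal{D}^{t}_{\mathcal{R}}$) shows that $x\mapsto\psi_{p}(x)$ is lower semicontinuous at $\bar{x}$, giving $\psi_{p}(\bar{x})\leq\liminf_{k}\psi_{p}(x^{k})$. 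Chaining with the extra hypothesis yields $\psi_{p}(\bar{x})\leq\liminf_{k}\psi^{t_{k}}_{\mbox{KDB}}(x^{k})$. Finally, along the convergent subsequence $F(x^{k},y^{k})=\psi^{t_{k}}_{\mbox{KDB}}(x^{k})\to F(\bar{x},\bar{y})$, and since the full-sequence $\liminf$ is dominated by any subsequential limit, $\liminf_{k}\psi^{t_{k}}_{\mbox{KDB}}(x^{k})\leq F(\bar{x},\bar{y})$; combining these gives $\psi_{p}(\bar{x})\leq F(\bar{x},\bar{y})$, i.e. $(\bar{y},\bar{u})\in\mathcal{S}_{p}(\bar{x})$, the desired contradiction. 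The only delicate points are the subsequence bookkeeping in this $\liminf$ chain and checking that $\mathcal{S}^{t_{k}}_{\mathcal{R}}(x^{k})$ is nonempty for $k$ large, which holds because $\mathcal{D}^{t_{k}}_{\mathcal{R}}(x^{k})$ is then compact and $F$ is continuous.
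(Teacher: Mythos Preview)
Your proof is correct and follows essentially the same contradiction scheme as the paper: extract $(y^{k},u^{k})\in\mathcal{S}^{t_{k}}_{\mathcal{R}}(x^{k})$ bounded away from $\mathcal{S}_{p}(\bar{x})$, use upper semicontinuity of $(t,x)\rightrightarrows\mathcal{D}^{t}_{\mathcal{R}}(x)$ together with compactness of $\mathcal{D}(\bar{x})$ to get a cluster point $(\bar{y},\bar{u})\in\mathcal{D}(\bar{x})$, and then show $(\bar{y},\bar{u})\in\mathcal{S}_{p}(\bar{x})$. The only cosmetic difference is that for $\mathcal{R}\in\{\mbox{S},\mbox{LF},\mbox{SU},\mbox{KS}\}$ the paper first notes that l.s.c.\ of $\mathcal{D}$ gives l.s.c.\ of $\psi_{p}$ and then invokes Theorem~\ref{fprop}(a) to write $F(x^{k},y^{k})\geq\psi^{t_{k}}_{\mathcal{R}}(x^{k})\geq\psi_{p}(x^{k})$, whereas you unfold this step by explicitly constructing the competitor $(\tilde{y}^{k},\tilde{u}^{k})\in\mathcal{D}(x^{k})\subset\mathcal{D}^{t_{k}}_{\mathcal{R}}(x^{k})$; these are the same argument. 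One small remark: you do not need compactness of $\mathcal{D}^{t_{k}}_{\mathcal{R}}(x^{k})$ to ensure $\mathcal{S}^{t_{k}}_{\mathcal{R}}(x^{k})\neq\emptyset$ along the contradicting subsequence, since the paper's convention $e(\emptyset,B)=0$ already forces nonemptiness whenever the excess is bounded below by $\varepsilon>0$.
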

\begin{proof}
Assume that for a given $\mathcal{R}\in  \left\{\text{S}, \, \text{LF}, \,\text{KDB}, \, \text{SU},\, \text{KS}\right\}$, the property (\ref{e1}) does not hold. Thus,  there exist $\delta >0$ and a sequence $(y^{k},u^{k})_{k}$ such that $(y^{k},u^{k})\in\mathcal{S}_\mathcal{R}^{t_{k}}(x^{k})$ and
\begin{equation}\label{not}
d\left((y^{k},u^{k}),\;\mathcal{S}_p(\bar{x})\right) \geq \delta \;\mbox{ for all }\; k.
\end{equation}
Therefore, since $(y^{k},u^{k})\in \mathcal{D}_\mathcal{R}^{t_{k}}(x^{k})$, 
$
d\left((y^{k},u^{k}), \,\mathcal{D}(\bar{x})\right)\leq e\left(\mathcal{D}_\mathcal{R}^{t_{k}}(x^{k}
), \,\mathcal{D}(\bar{x})\right).
$
By the upper semicontinuity of the set-valued mapping $(t,x) \rightrightarrows \mathcal{D}_\mathcal{R}^{t}(x)$ at the point $\left(0^+,\bar{x}\right)$,  we have  
\[
\underset{k \rightarrow
		\infty}{\lim }e\left(\mathcal{D}^{t_{k}}_\mathcal{R}(x^{k}), \;\mathcal{D}(\bar{x})\right)=0.
\]
Hence, one can pick a sequence  $(z^{k},w^{k})_k$ in $\mathcal{D}(\bar{x})$ such that
\[
    \underset{k \rightarrow
		\infty}{\lim }\left\Vert y^{k}-z^{k}\right\Vert = 0 \;\, \text{ and } \;\,\underset{k \rightarrow
		\infty}{\lim }\left\Vert
	u^{k}-w^{k}\right\Vert = 0
\]
hold. Due to the compactness of $\mathcal{D}(\bar{x})$, this sequence (up to a subsequence) converges to a point $(\bar{y},\bar
{u})\in\mathcal{D}(\bar{x})$ and so does the (sub)sequence $(y^{k},u^{k})$.

Since $F(\cdot,\cdot)$ is continuous and $\mathcal{D}(\cdot)$ is lower
semicontinuous, the function $\psi_{p}(\cdot)$ is lower semicontinuous at
$\bar{x}$. Let us now prove that $(\bar{y},\bar{u})\in\mathcal{S}_{p}(\bar{x})$ for $\mathcal{R}\in  \left\{\mbox{S}, \, \mbox{LF}, \, \mbox{SU},\, \mbox{KS}\right\}$. Indeed,
from assertion (a) in Theorem \ref{fprop}, 
\begin{equation} \label{psiR}
    F(x^{k},y^{k})-\psi_{p}(x^{k})\geq F(x^{k},y^{k})-\psi_\mathcal{R}^{t_{k}}
(x^{k})\geq0
\end{equation}
as $(y^{k},u^{k})\in\mathcal{S}_\mathcal{R}^{t_{k}}(x^{k})$.  Hence,
\[
F(\bar{x},\bar{y})-\psi_{p}(\bar{x})\geq F(\bar{x},\bar{y})-\underset
{k\rightarrow\infty}{\text{ }\lim\inf}\psi_{p}(x^{k})\geq \underset
{k\rightarrow\infty}{\text{ }\lim\sup}(F(x^{k},y^{k})-\psi_\mathcal{R}^{t_{k}}(x^{k}
))\geq0.
\]
Consequently, $(\bar{y},\bar{u})\in\mathcal{S}_{p}(\bar{x})$ and from the inequality
\[
d\left((y^{k},u^{k}), \;\mathcal{S}_{p}(\bar{x})\right)\leq\left\Vert (y^{k},u^{k})-(\bar
{y},\bar{u})\right\Vert,
\]
$\underset{k\longrightarrow\infty}{\lim}d\left(\left(y^{k},u^{k}\right), \;\mathcal{S}_{p}
(\bar{x})\right)=0$, which is a contradiction to (\ref{not}).  The  poof  is similar for the KDB-relaxation using the additional assumption since the first inequality  \eqref{psiR} fails in this case. \hfill \qed
\end{proof}


\section{Numerical experiments}\label{Numerical experiments}
The focus of our experiments in this section will be to implement our Algorithm \ref{algorithm1-S} on each of the relaxations $\mathcal{R}\in \left\{\mbox{S}, \mbox{LF}, \mbox{KDB}, \mbox{SU}, \mbox{KS}\right\}$. 
To proceed, we first write the optimality conditions for an $\mathcal{R}$-relaxation problem \eqref{KKT-RG} for a fixed $t>0$. Considering Theorem \ref{Optimality conditions for RMPCC(t)-S}, it follows that under suitable assumptions, a local optimal solution $x$ of problem \eqref{KKT-RG} for a fixed $t>0$ satisfies the necessary optimality conditions
\begin{eqnarray}
\nabla_{x}F(x,y)+\nabla G(x)^{\top}\alpha-\nabla
_{x}\mathcal{L}(x,y,u)^{\top}\beta  -  \nabla_{x}
\Phi^{t}_{\mathcal{R}}(x,y,u,\gamma, \mu, \delta)=0,\label{Er1S1}\\
\nabla_{y}F(x,y)-\nabla_{y}\mathcal{L}(x,y,u)^{\top}
\beta -     \nabla_{y}
\Phi^{t}_{\mathcal{R}}(x,y,u,\gamma, \mu, \delta)=0,\\
 \nabla_{y}g_{i}(x,y)\beta -\nabla_{u}
\Phi^{t}_{\mathcal{R}}(x,y,u,\gamma, \mu, \delta)=0,\\
\mathcal{L}(x,y,u)  = 0, \\
\alpha \geq 0, \; \; G(x) \leq 0, \;\; \alpha^\top G(x) = 0, \label{Doctrine-1}\\
\gamma \geq 0, \; \; \nabla_{\gamma}\Phi_{\mathcal{R}}^{t}(x,y,u,\gamma,\mu,\delta) \leq 0, \; \; \gamma^\top \nabla_{\gamma}\Phi_{\mathcal{R}}^{t}(x,y,u,\gamma,\mu,\delta) = 0, \\
\mu \geq 0, \; \;  \nabla_{\mu}\Phi_{\mathcal{R}}^{t}(x,y,u,\gamma,\mu,\delta) \leq 0, \; \; \mu^\top \nabla_{\mu}\Phi_{\mathcal{R}}^{t}(x,y,u,\gamma,\mu,\delta) = 0, \\
\delta \geq 0, \; \; \nabla_{\delta}\Phi_{\mathcal{R}}^{t}(x,y,u,\gamma,\mu,\delta) \leq 0, \; \; \delta^\top \nabla_{\delta}\Phi_{\mathcal{R}}^{t}(x,y,u,\gamma,\mu,\delta) = 0, \label{Er1S2}
\end{eqnarray}
for some  $(y,u, \alpha, \beta, \delta, \gamma, \mu)$. Note that for $t>0$ and  $i=1, \ldots, q$, we have 
\begin{equation*}
    \Phi^{t}_{i, \mathcal{R}}(x, y, u,\gamma,\mu,\delta):=\left\{ 
    \begin{array}{lll}
                 \gamma_{i} g_i(x,y)  -\mu_{i}u_i    - \delta_{i}(u_i g_i(x,y)+t)
        &  \mbox{ if } & \mathcal{R}:=\text{S},\\[1ex]
           -\gamma_{i}(u_{i}g_{i}(x,y)+t^{2}) -\delta_{i}((u_{i}+t)(-g_{i}(x,y)+t)+t^{2})
         &  \mbox{ if } & \mathcal{R}:=\text{LF},\\[2.5ex]
           \gamma_{i}( g_i(x,y) -t)  - \mu_{i}(u_i + t) - \delta_{i}(u_i -t)(g_i(x,y) + t)
         &  \mbox{ if } & \mathcal{R}:=\text{KDB},\\[2.5ex]
            \gamma_{i} g_i(x,y) -\mu_{i}u_i +   \delta_{i}\varphi^t_{i, SU}(x,y,u)
       &  \mbox{ if } & \mathcal{R}:= \text{SU},\\[2.5ex]
            \gamma_{i} g_i(x,y) -\mu_{i}u_i + \delta_{i} \varphi^t_{i, KS}(x,y,u)
        &  \mbox{ if } & \mathcal{R}:=\text{KS}
    \end{array} 
    \right.
\end{equation*}
with $\Phi^{t}_{\mathcal{R}}(x, y, u,\gamma,\mu,\delta):=\left(\Phi^{t}_{i, \mathcal{R}}(x, y, u,\gamma,\mu,\delta)\right)^q_{i=1}$.  
Observe that the system \eqref{Er1S1}--\eqref{Er1S2} is obtained from  \eqref{Er0S*}--\eqref{Er5S*} by expanding \eqref{Er0S*}. 
In order to compute points satisfying the system \eqref{Er1S1}--\eqref{Er1S2}, we can first approximately write it  as a system of   {smooth} equations by applying the  smoothing Fischer-Burmeister function $\theta^\epsilon :\mathbb{R}\times \mathbb{R} \rightarrow \mathbb{R}$, which is defined by 
\[
\theta^{\epsilon}(a, b):=\sqrt{a^2 + b^2 +   {2\epsilon}} -(a + b),
\]
where the smoothing parameter $\epsilon>0$ helps to guarantee the  differentiability of the function at points $(a, b)$ such that $a=b=0$. It is well known that the following equivalence holds:
\begin{equation}\label{perb}
    \theta^{\epsilon}(a, b) = 0 \;\;\Longleftrightarrow \;\; [a>0, \; b>0,\; ab = \epsilon ].
\end{equation}
Note that in the sequel, if $a$ and $b$ are vectors, writing $\theta^{\epsilon}(a, b)$ should be understood componentwise. Hence, the system of  optimality conditions \eqref{Er1S1}--\eqref{Er1S2} can be approximately written as follows: 
\begin{align}\label{EquFinal}
    \Psi^{\epsilon,t}_{\mathcal{R}}(\zeta) : = \left( \begin{array}{c}
      \nabla_{x}F(x,y)+\nabla G(x)^{\top}\alpha-\nabla
_{x}\mathcal{L}(x,y,u)^{\top}\beta  -  \nabla_{x}
\Phi^{t}_{\mathcal{R}}(x,y,u,\gamma, \mu, \delta)\\   
       \nabla_{y}F(x,y)-\nabla_{y}\mathcal{L}(x,y,u)^{\top}
\beta -     \nabla_{y}
\Phi^{t}_{\mathcal{R}}(x,y,u,\gamma, \mu, \delta) \\
 \nabla_{y}g_{i}(x,y)\beta -\nabla_{u}
\Phi^{t}_{\mathcal{R}}(x,y,u,\gamma, \mu, \delta) \\
\mathcal{L}(x,y,u) \\
\theta^{\epsilon}(\alpha,\,G(x)) \\
\theta^{\epsilon}(\gamma,\,\nabla_{\gamma}\Phi_{\mathcal{R}}^{t}(x,y,u,\gamma,\mu,\delta))\\
\theta^{\epsilon}(\delta, \,\nabla_{\delta}\Phi_{\mathcal{R}}^{t}(x,y,u,\gamma,\mu,\delta)) \\
\theta^{\epsilon}(\mu,\,\nabla_{\mu}\Phi_{\mathcal{R}}^{t}(x,y,u,\gamma,\mu,\delta))
    \end{array}
    \right)=0
\end{align}
with $\zeta:=(x, y,u, \alpha, \beta, \delta, \gamma, \mu)$.  Observe that if $\epsilon =0$, then the resulting system in \eqref{EquFinal} is a nonsmooth system of equations that is equivalent to the optimality conditions in \eqref{Er1S1}--\eqref{Er1S2}. The introduction of the perturbation $\epsilon$ helps to obtain a smooth system of equations. As we aim to use a solver for smooth equation systems, our focus in the remainder of this section will be on \eqref{EquFinal}. 
Thus, as the functions $F$ and $G$ (resp. $f$ and $g$) are assumed to be twice (resp. thrice) continuously differentiable (cf. Section \ref{Introduction}), for any given $\epsilon >0$, $t>0$, and $\mathcal{R}\in \left\{\mbox{S}, \mbox{LF}, \mbox{KDB}, \mbox{SU}, \mbox{KS}\right\}$, the function $\Psi^{\epsilon,t}_{\mathcal{R}}$ is continuously differentiable. 
\begin{figure}[h]
    \centering
    \begin{subfigure} 
  \centering
 \includegraphics[width=0.3\textwidth]{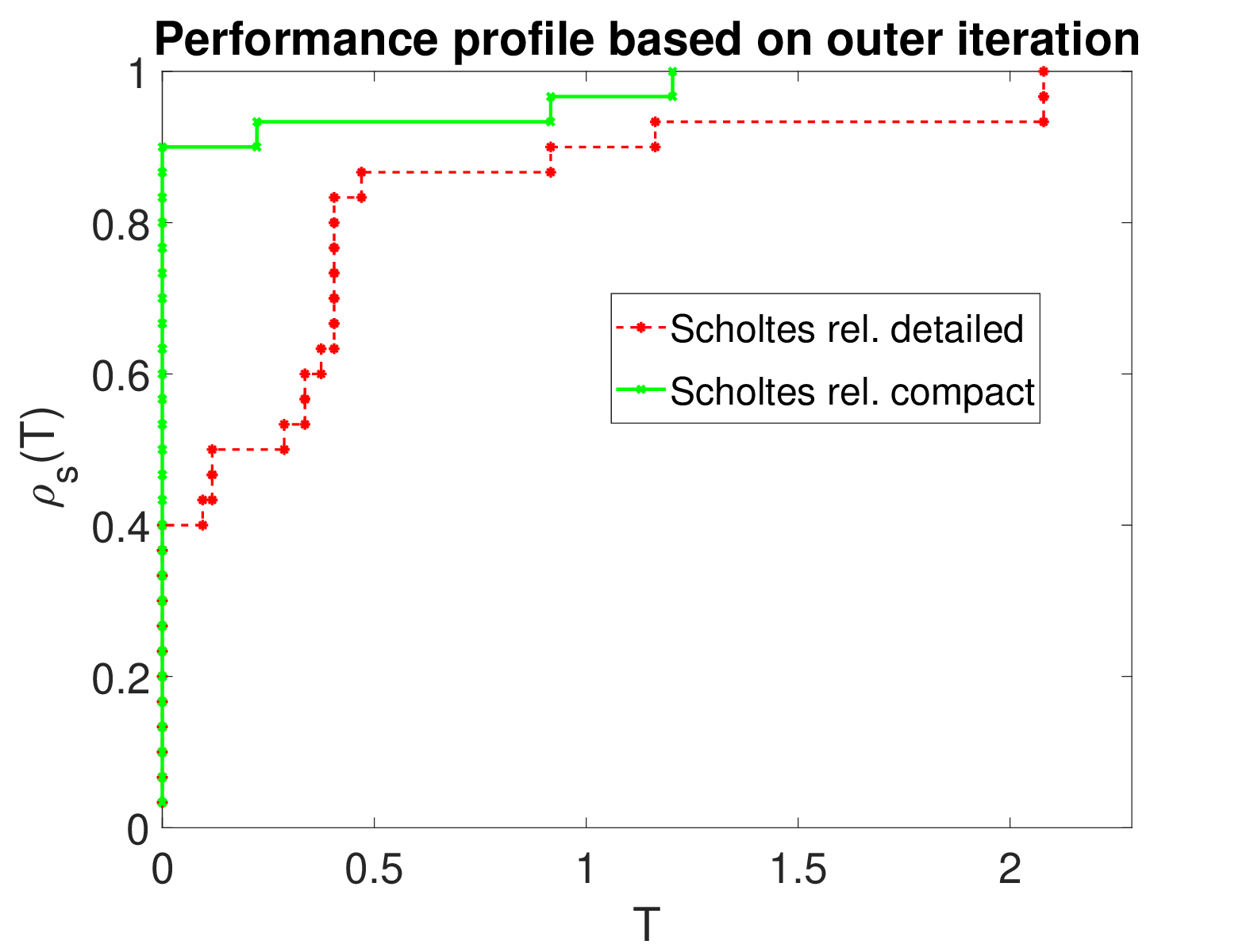}
     \end{subfigure}
 \hfill
     \begin{subfigure} 
         \centering
         \includegraphics[width=0.3\textwidth]{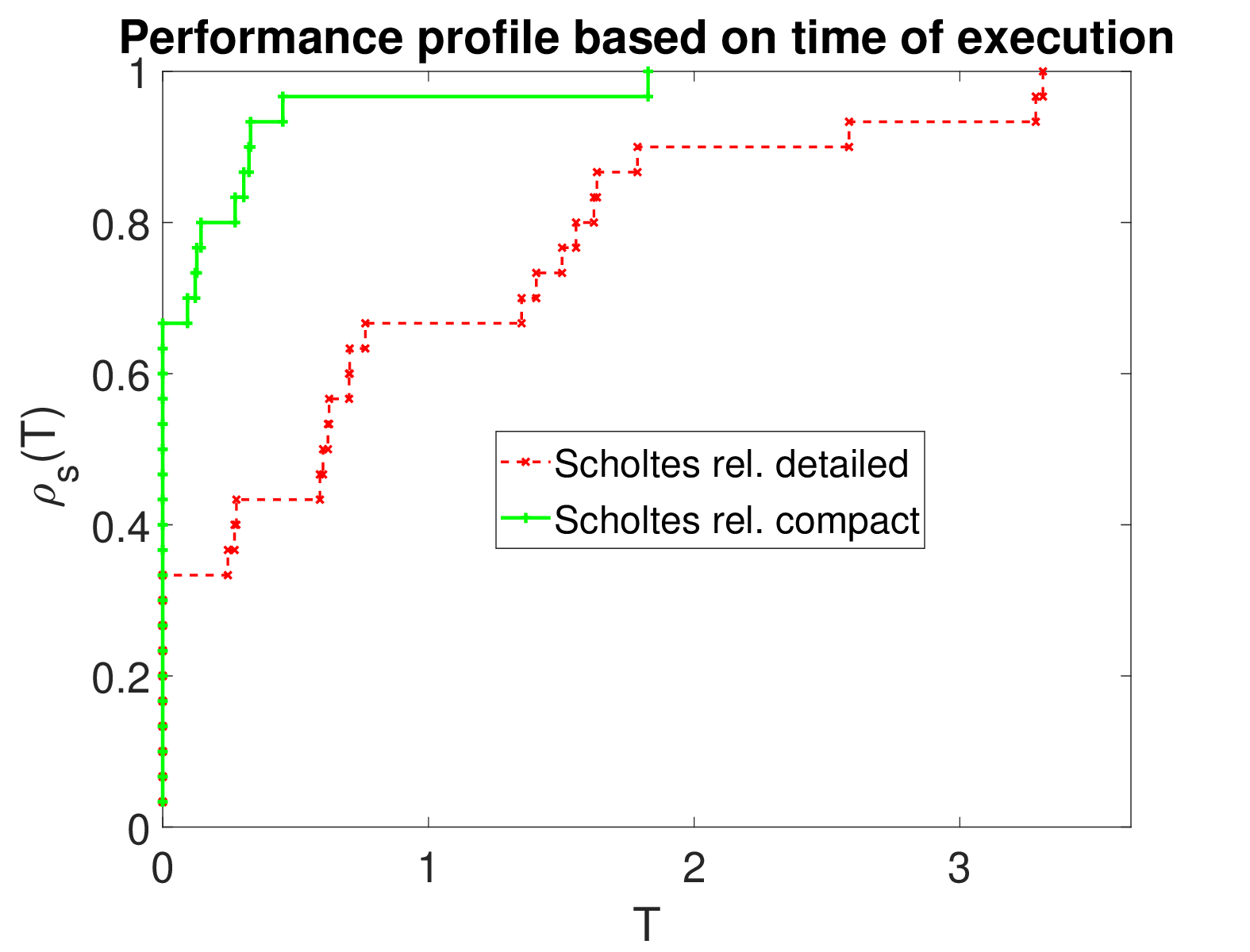}
     \end{subfigure} 
  \hfill
     \begin{subfigure} 
         \centering
         \includegraphics[width=0.3\textwidth]{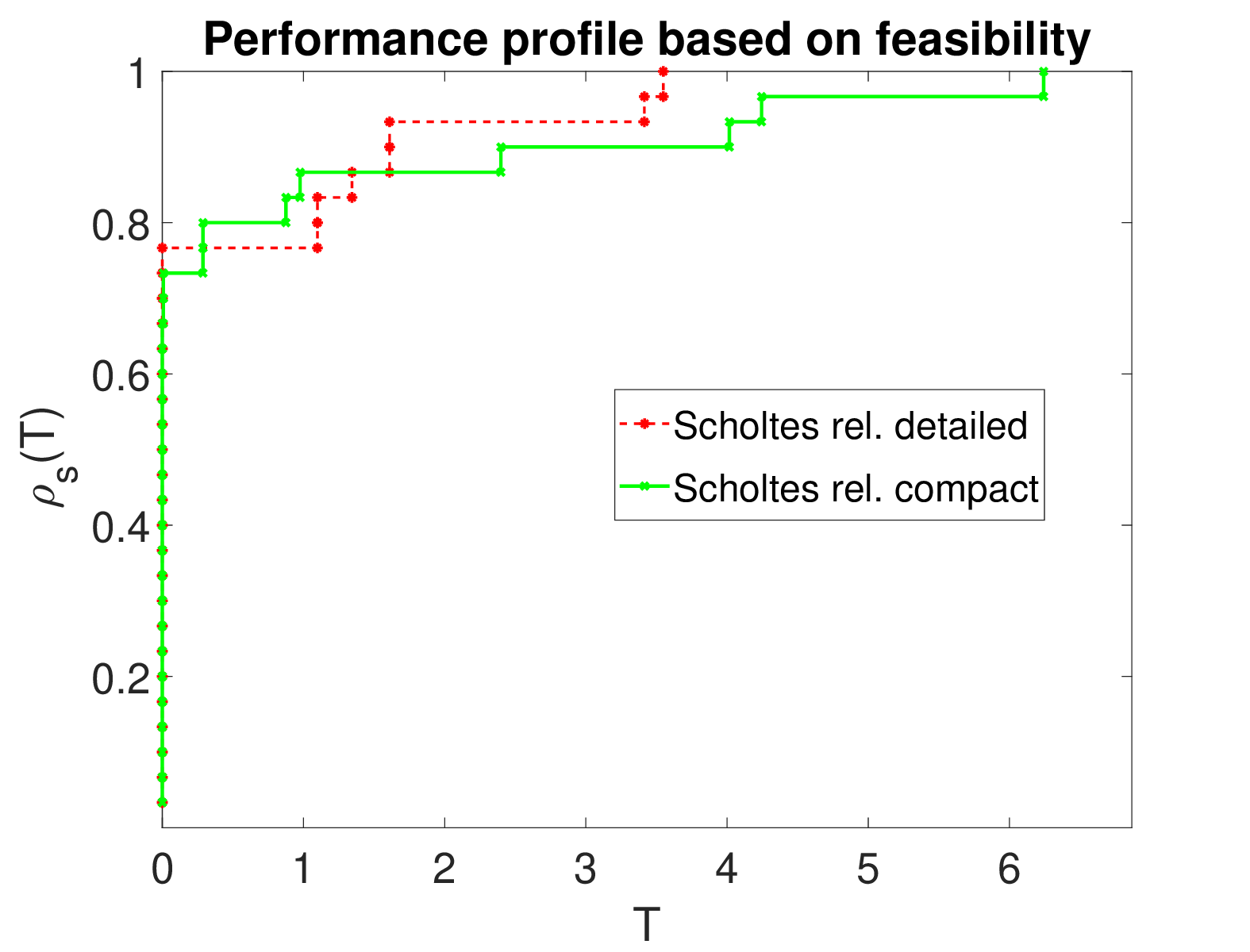}
     \end{subfigure}   
       \centering
    \begin{subfigure} 
  \centering
 \includegraphics[width=0.3\textwidth]{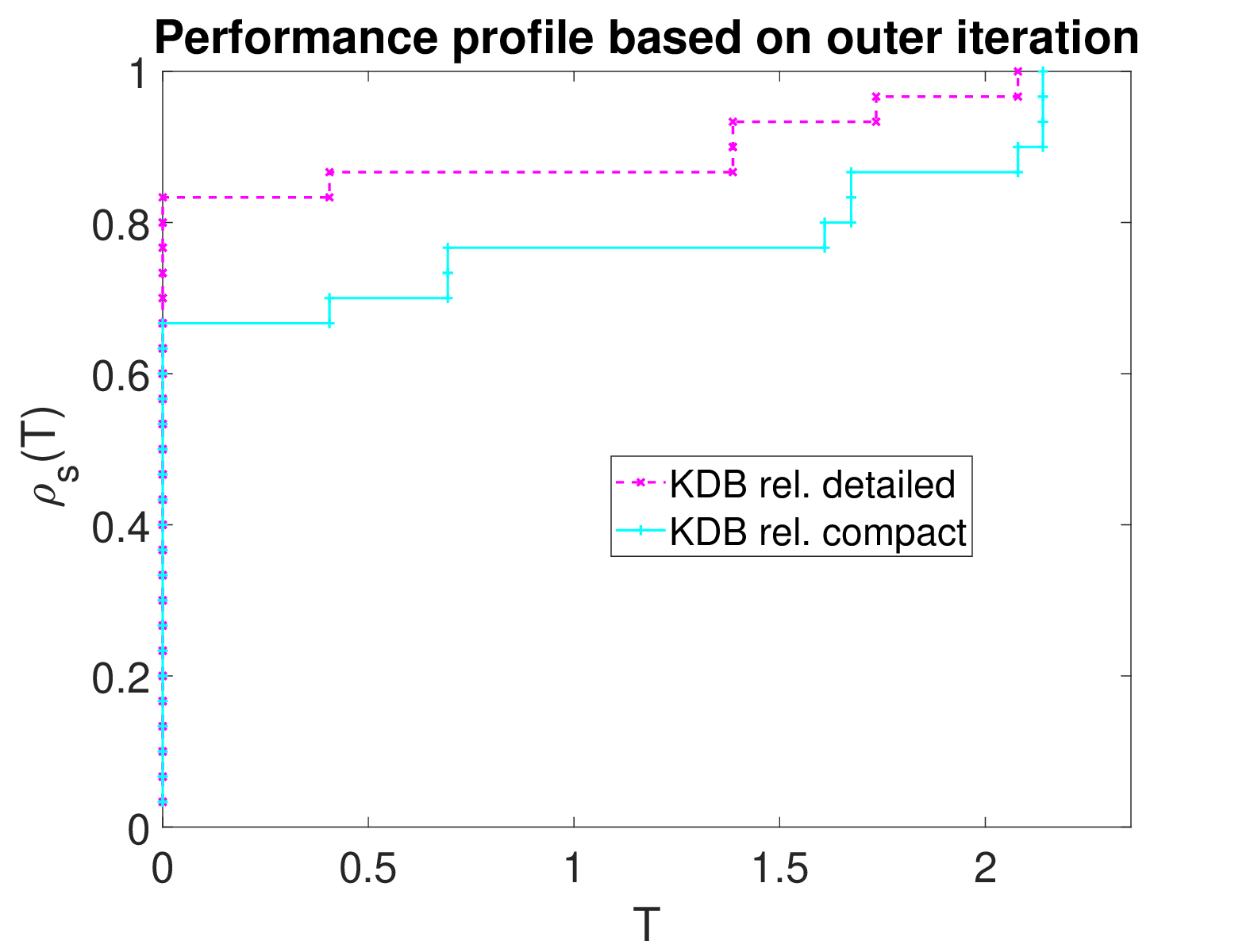}
     \end{subfigure}
 \hfill
     \begin{subfigure} 
         \centering
         \includegraphics[width=0.3\textwidth]{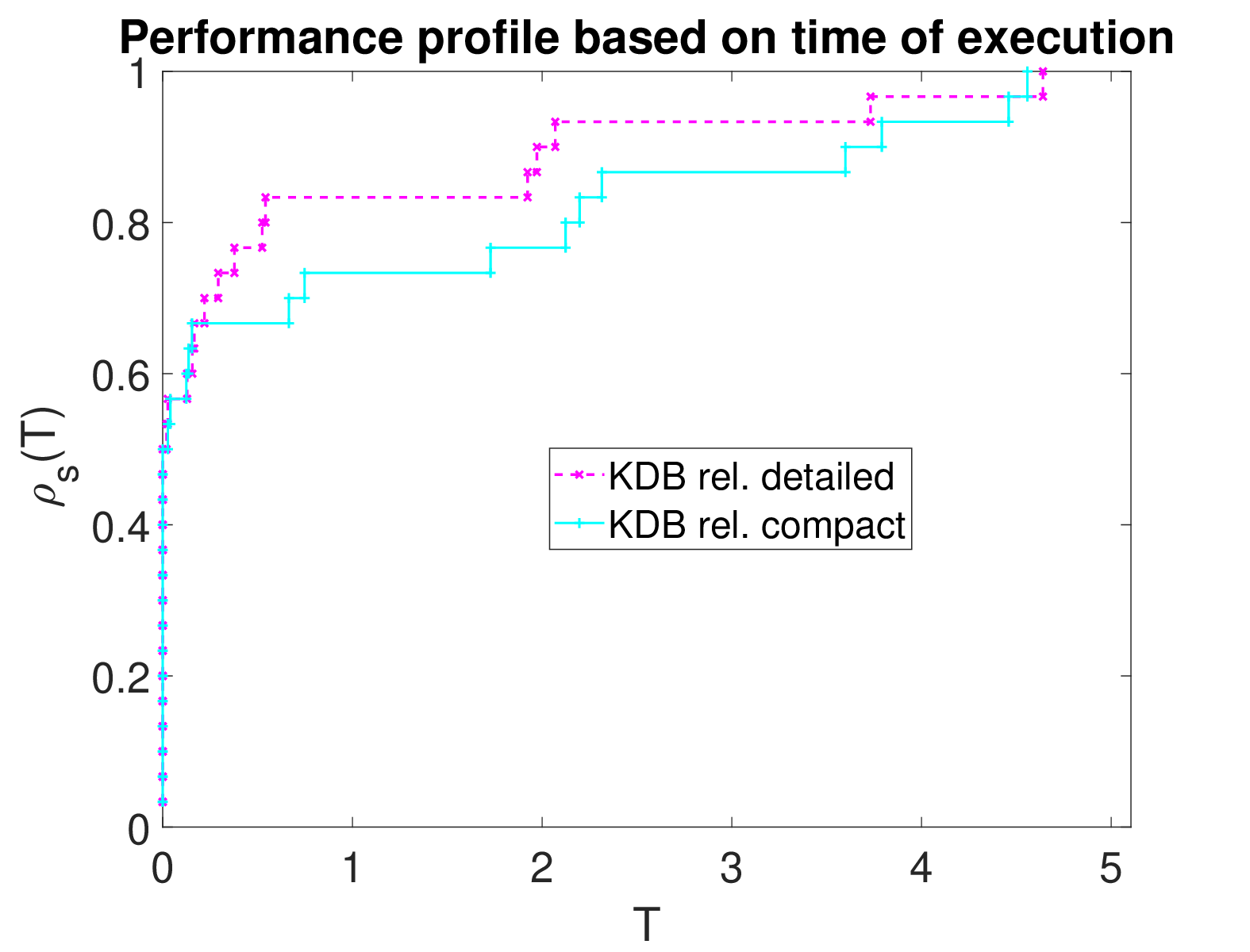}
     \end{subfigure} 
  \hfill
     \begin{subfigure} 
         \centering
         \includegraphics[width=0.3\textwidth]{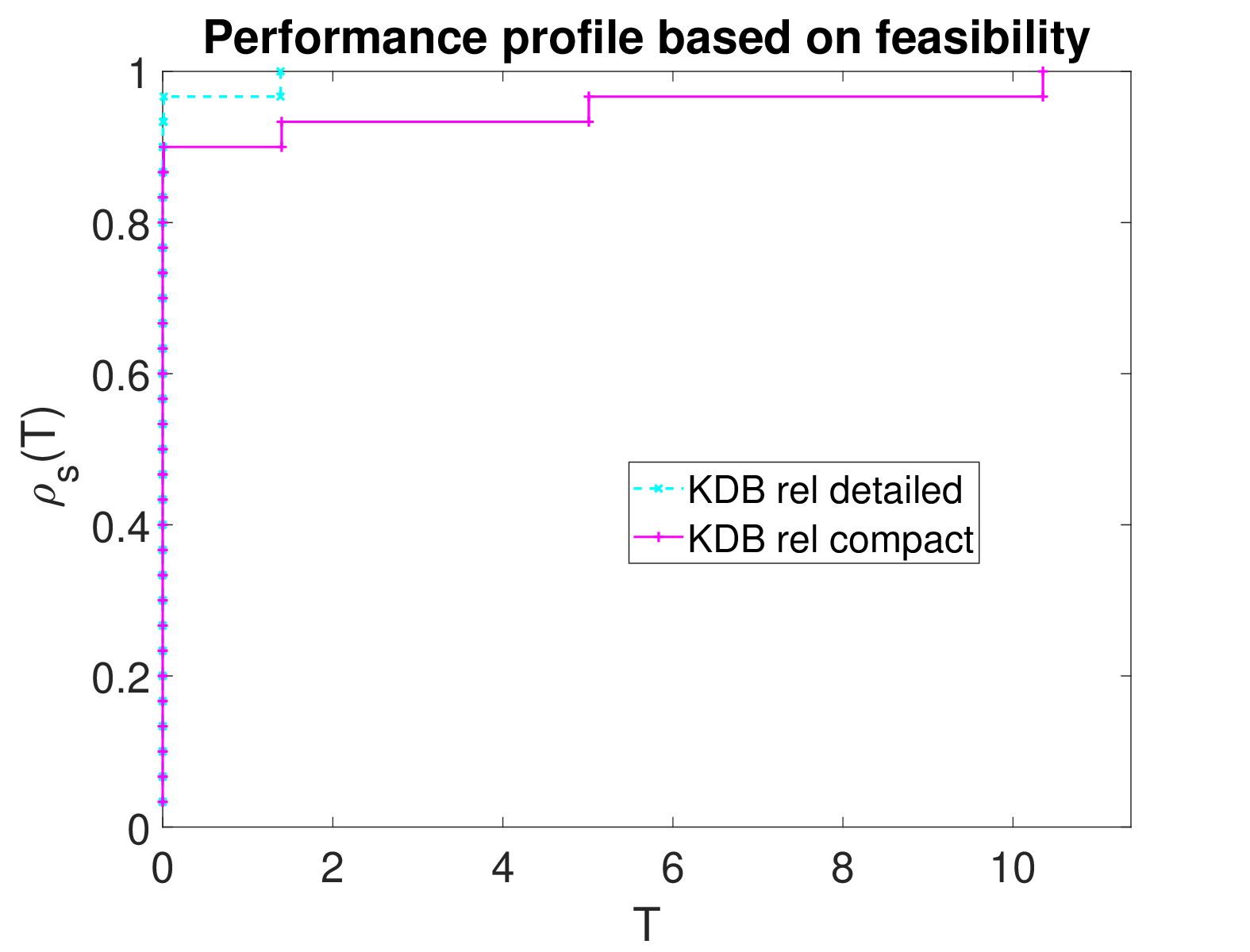}
     \end{subfigure}   
        \centering
    \begin{subfigure} 
  \centering
 \includegraphics[width=0.3\textwidth]{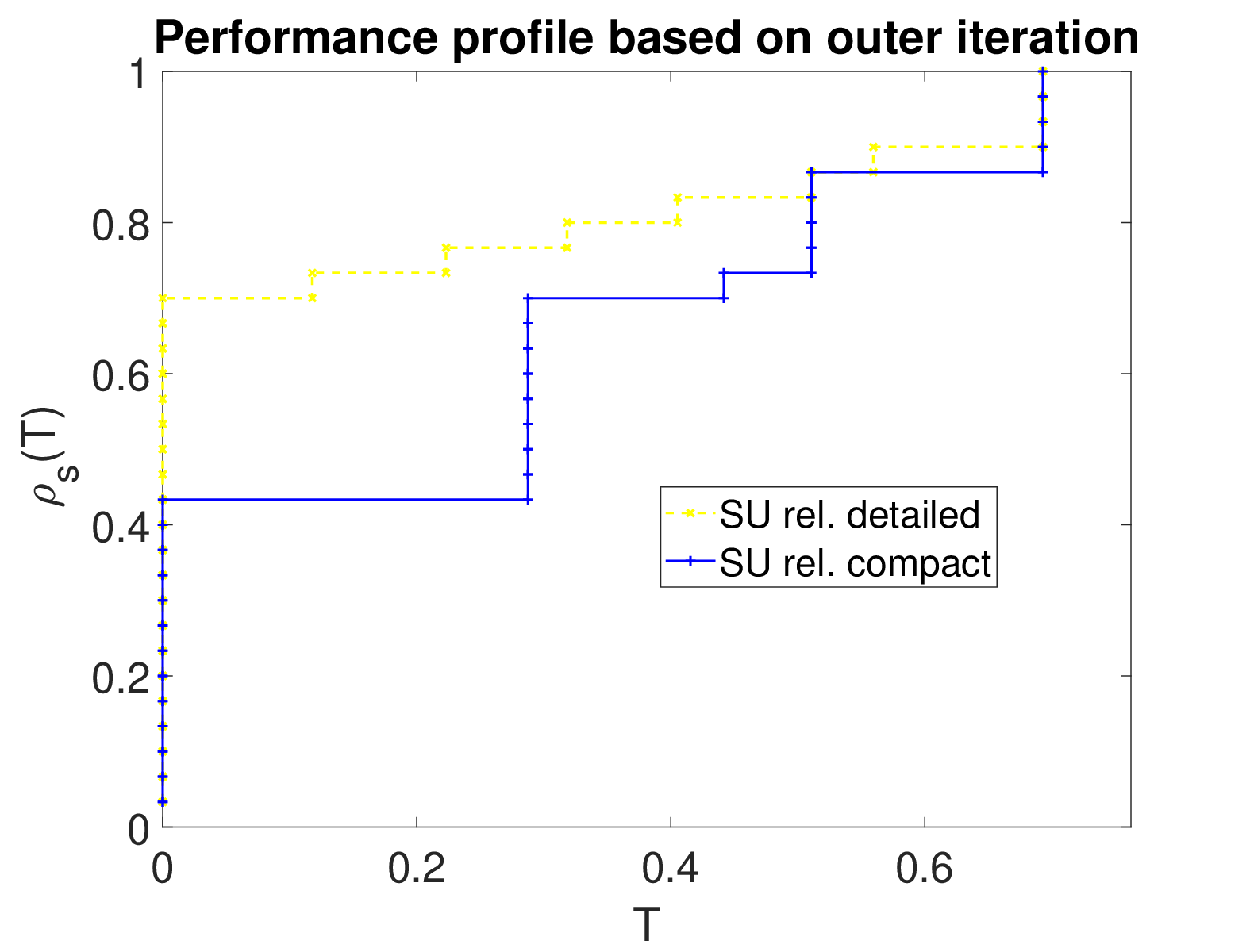}
     \end{subfigure}
 \hfill
     \begin{subfigure} 
         \centering
         \includegraphics[width=0.3\textwidth]{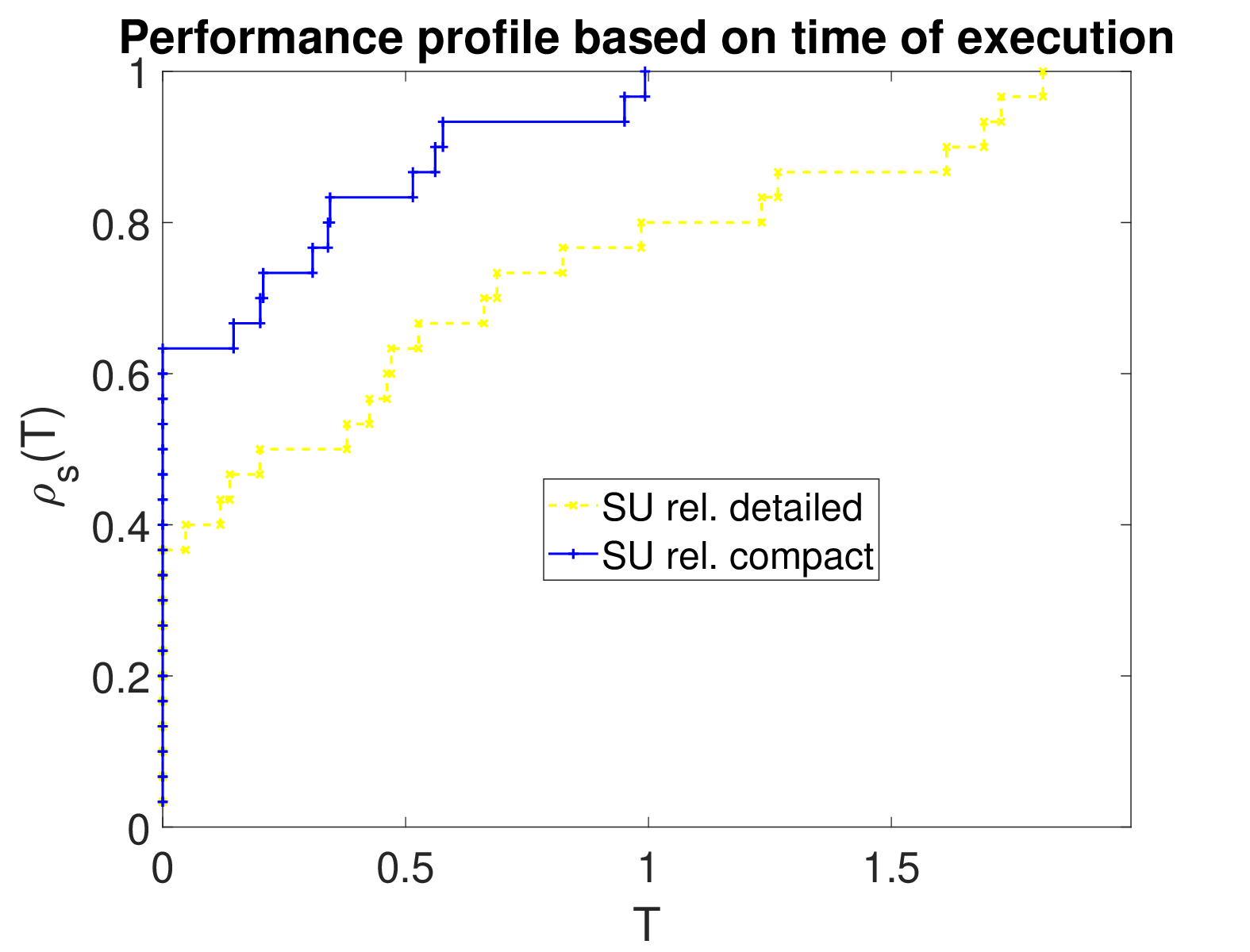}
     \end{subfigure} 
  \hfill
     \begin{subfigure} 
         \centering
         \includegraphics[width=0.3\textwidth]{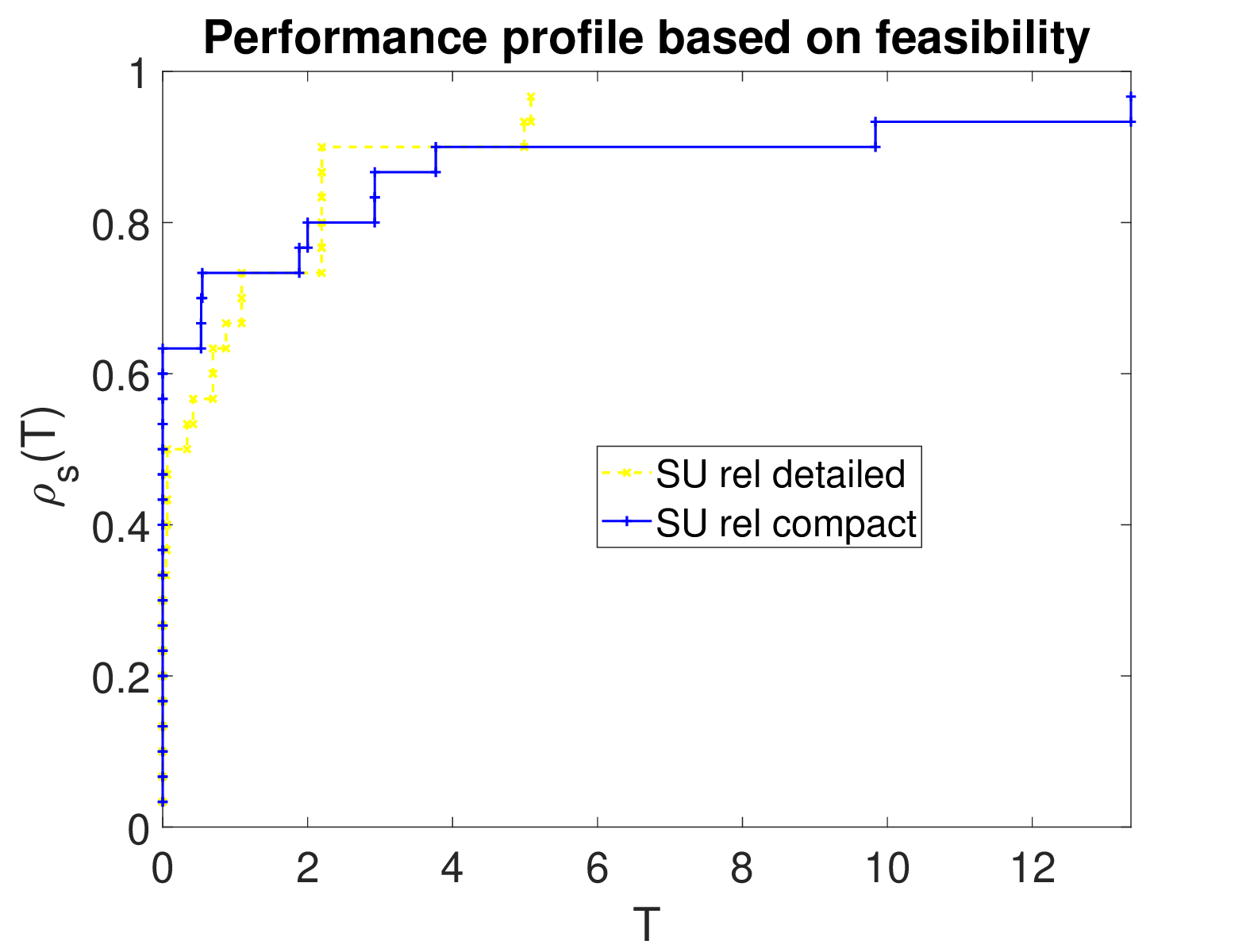}
     \end{subfigure}   
       \centering
    \begin{subfigure} 
  \centering
 \includegraphics[width=0.3\textwidth]{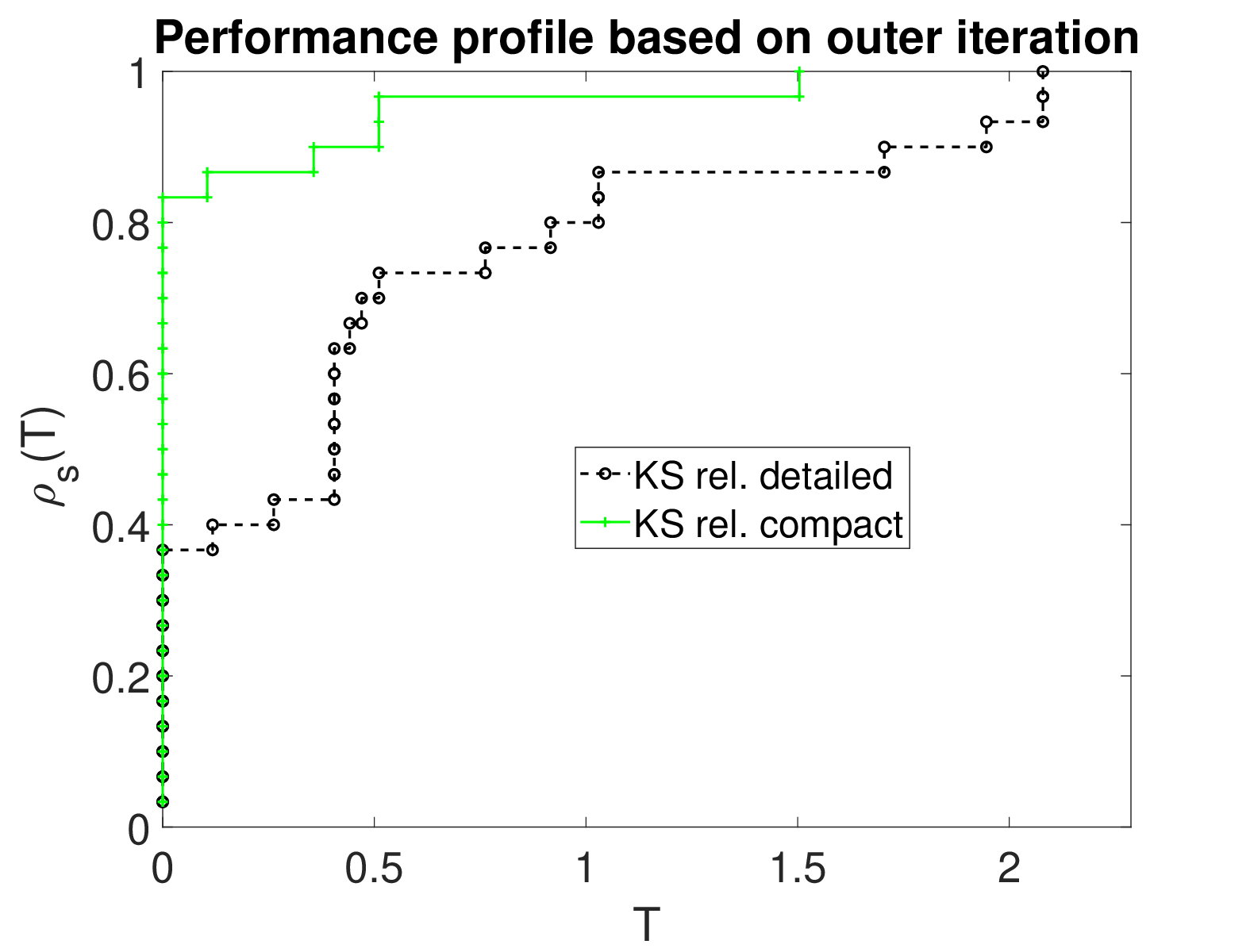}
     \end{subfigure}
 \hfill
     \begin{subfigure} 
         \centering
         \includegraphics[width=0.3\textwidth]{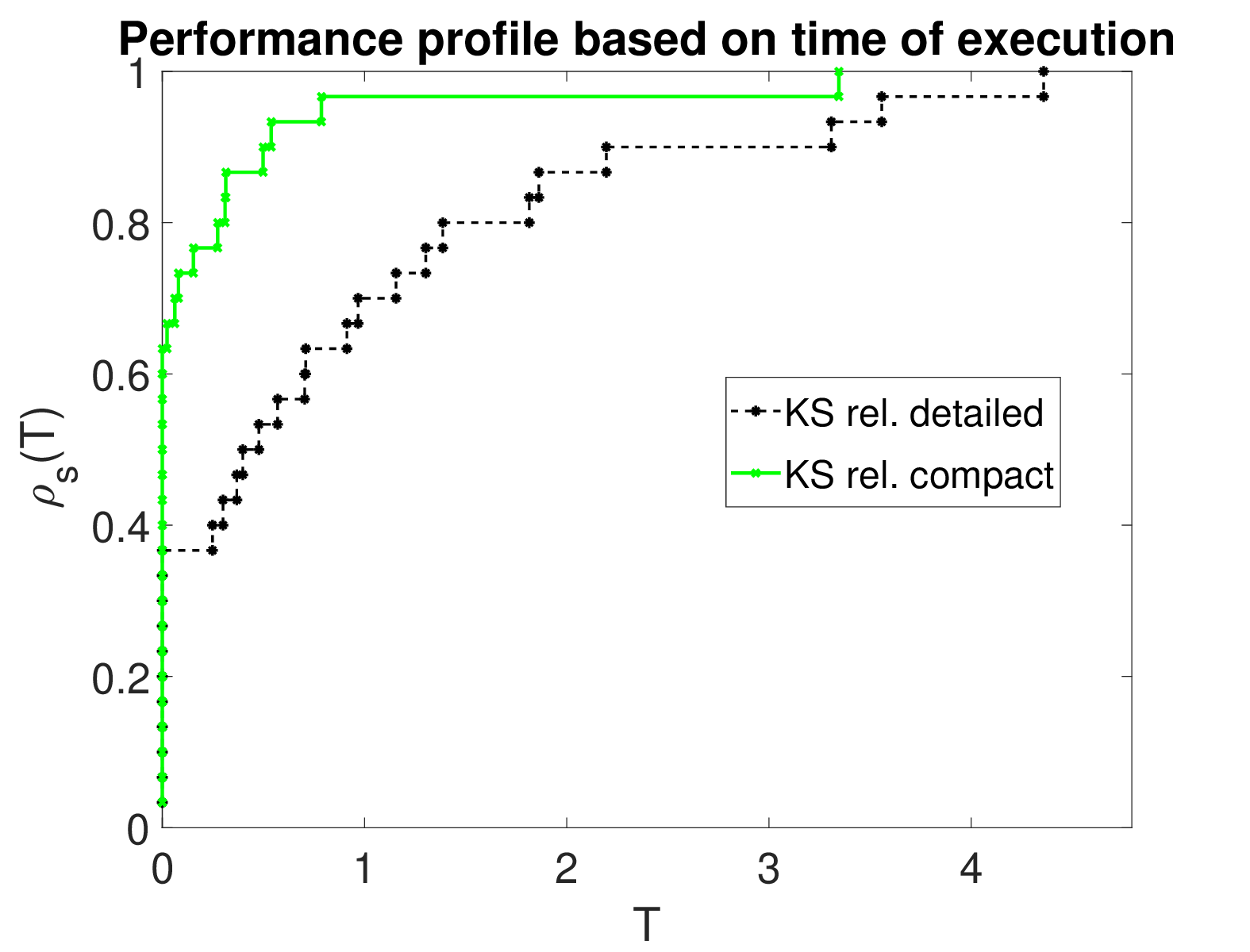}
     \end{subfigure} 
  \hfill
     \begin{subfigure} 
         \centering
         \includegraphics[width=0.3\textwidth]{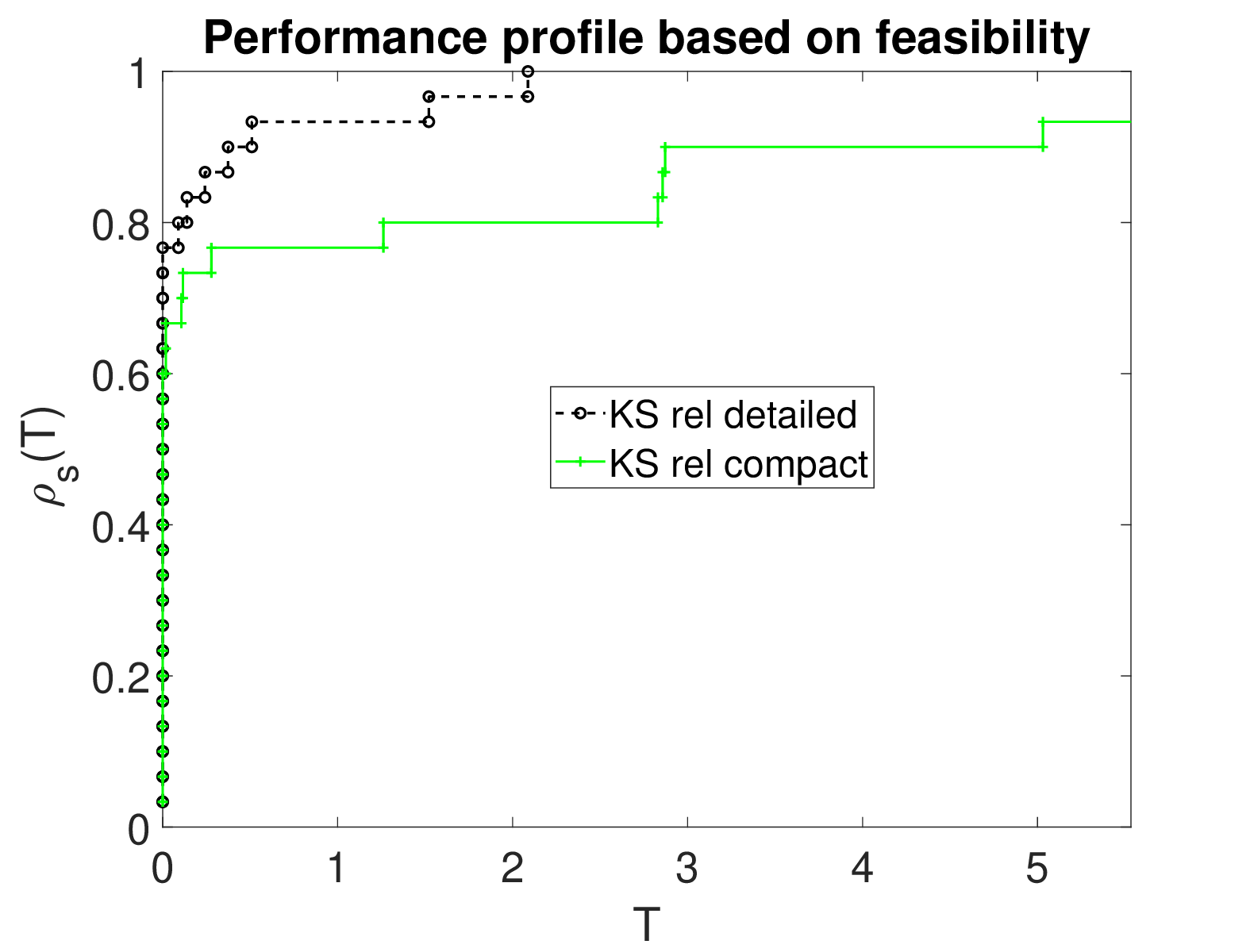}
     \end{subfigure}
        \caption{Comparison of the detailed and compact form of the relaxation methods with examples satisfying convexity and regularity conditions. From Top: Scholtes rel(axation), KDB rel(axation), SU rel(axation) and KS methods rel(axation).} 
       \label{figure1}   
\end{figure}

We use examples of pessimistic bilevel programs for which the optimal solutions have been reported in the literature; see \cite{Mitsos2006,WiesemannTsoukalasKleniatiRustem2013}. These problems are organized in two categories: the ones with convex (labeled as {mb\_1\_1\_06}, {mb\_1\_1\_10}, and {mb\_1\_1\_17}) and the those with nonconvex lower-level problems (i.e., precisely, {mb\_1\_1\_03}, {mb\_1\_1\_04}, {mb\_1\_1\_05}, {mb\_1\_1\_07}, {mb\_1\_1\_08}, {mb\_1\_1\_09}, {mb\_1\_1\_11}, {mb\_1\_1\_12}, {mb\_1\_1\_13}, and {mb\_1\_1\_14}). Subsequently, the evaluation of the performance analysis of the relaxation methods is split along the lines of these two categories, as well as in terms the \textit{detailed form} and \textit{compact form} (see \cite{BNZ} for more details on these forms) of each relaxation, in order to ascertain their impact on the results. It is important to recall that the compact form involves a reduction in the number of variables and the dimension of the system \eqref{EquFinal}. 
\begin{figure}
     \centering
     \begin{subfigure} 
         \centering
         \includegraphics[width=0.3\textwidth]{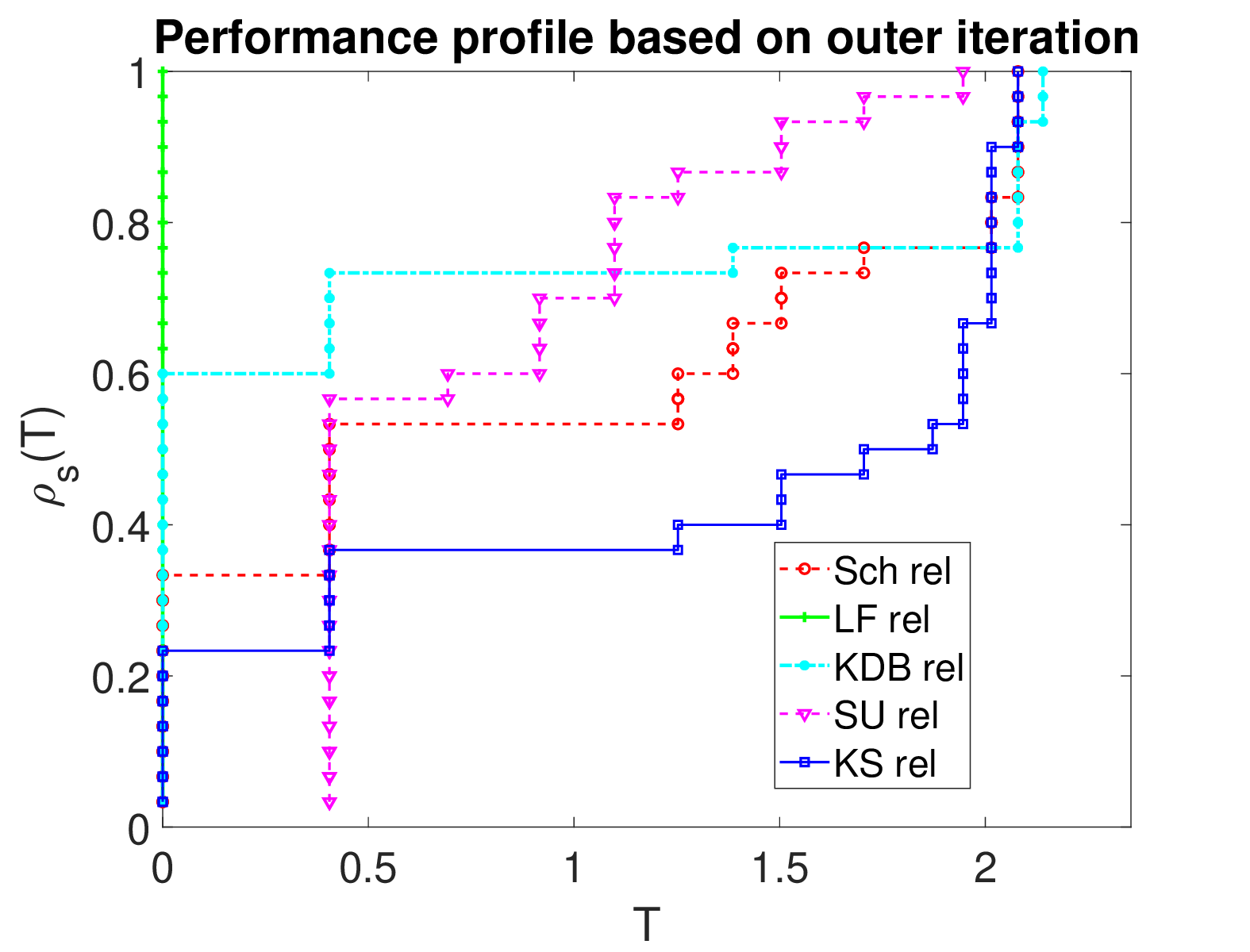}
     \end{subfigure}
     \hfill
     \begin{subfigure} 
         \centering
         \includegraphics[width=0.3\textwidth]{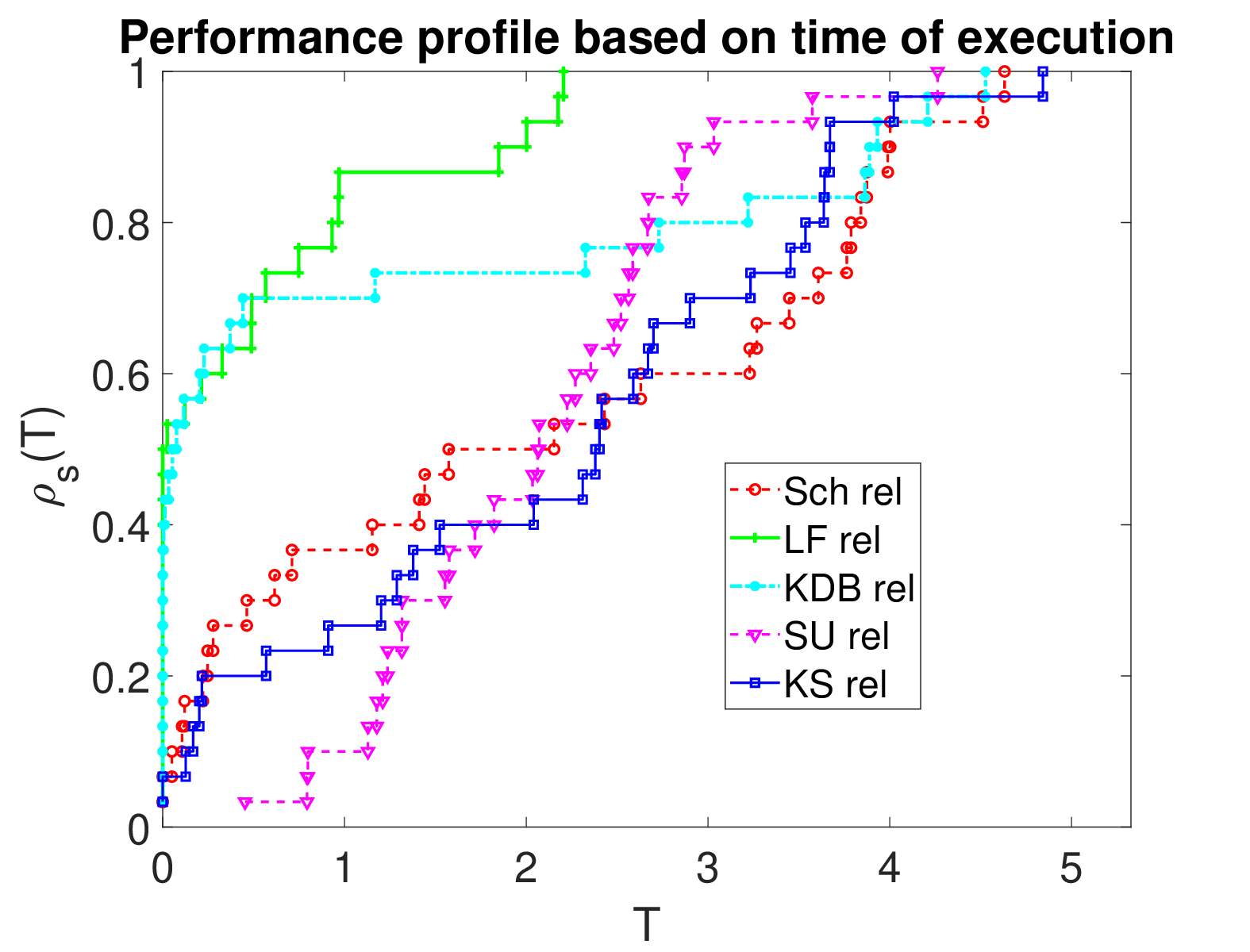}
     \end{subfigure}
     \hfill
          \begin{subfigure} 
         \centering
         \includegraphics[width=0.3\textwidth]{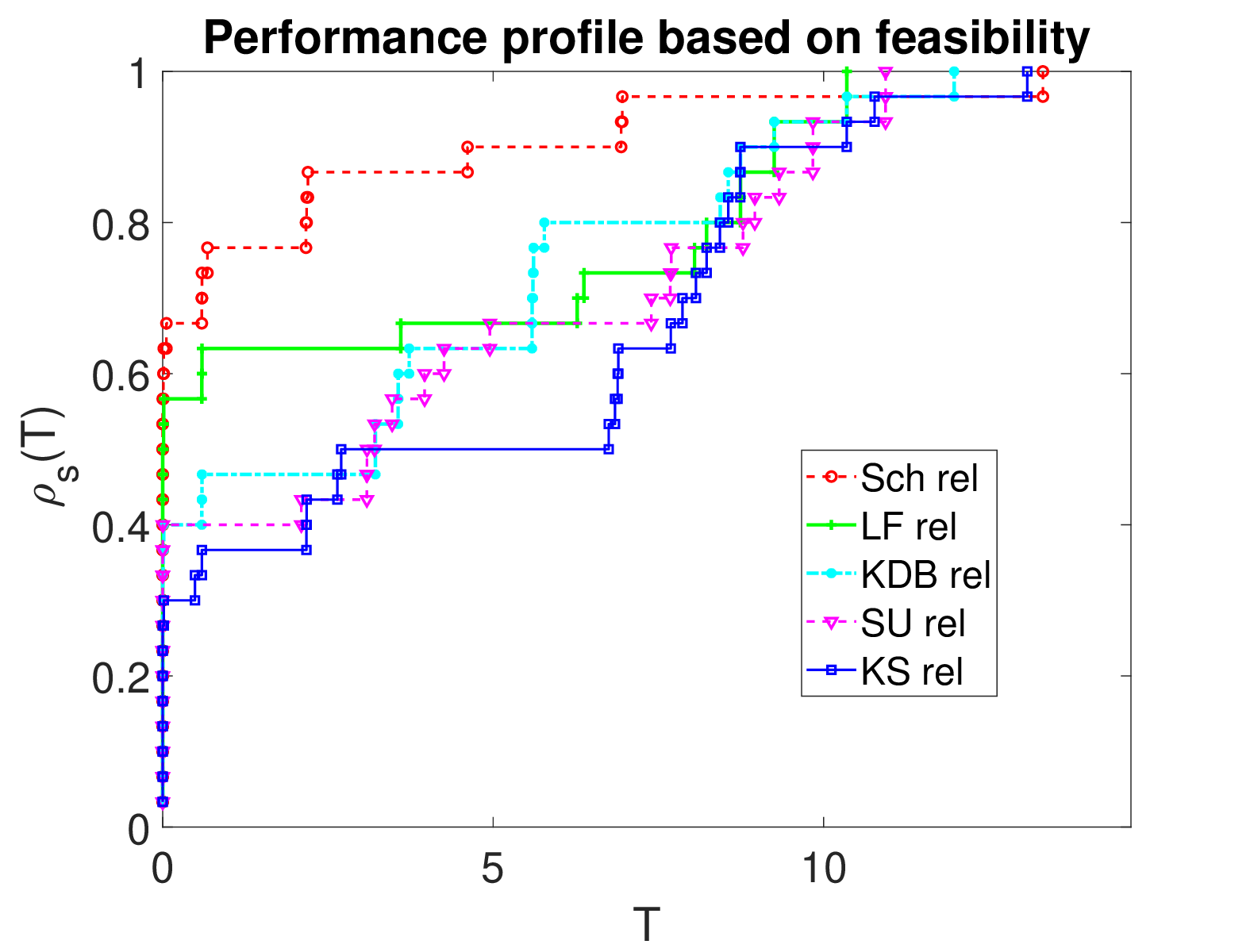}
     \end{subfigure}
          \centering
     \begin{subfigure} 
         \centering
         \includegraphics[width=0.3\textwidth]{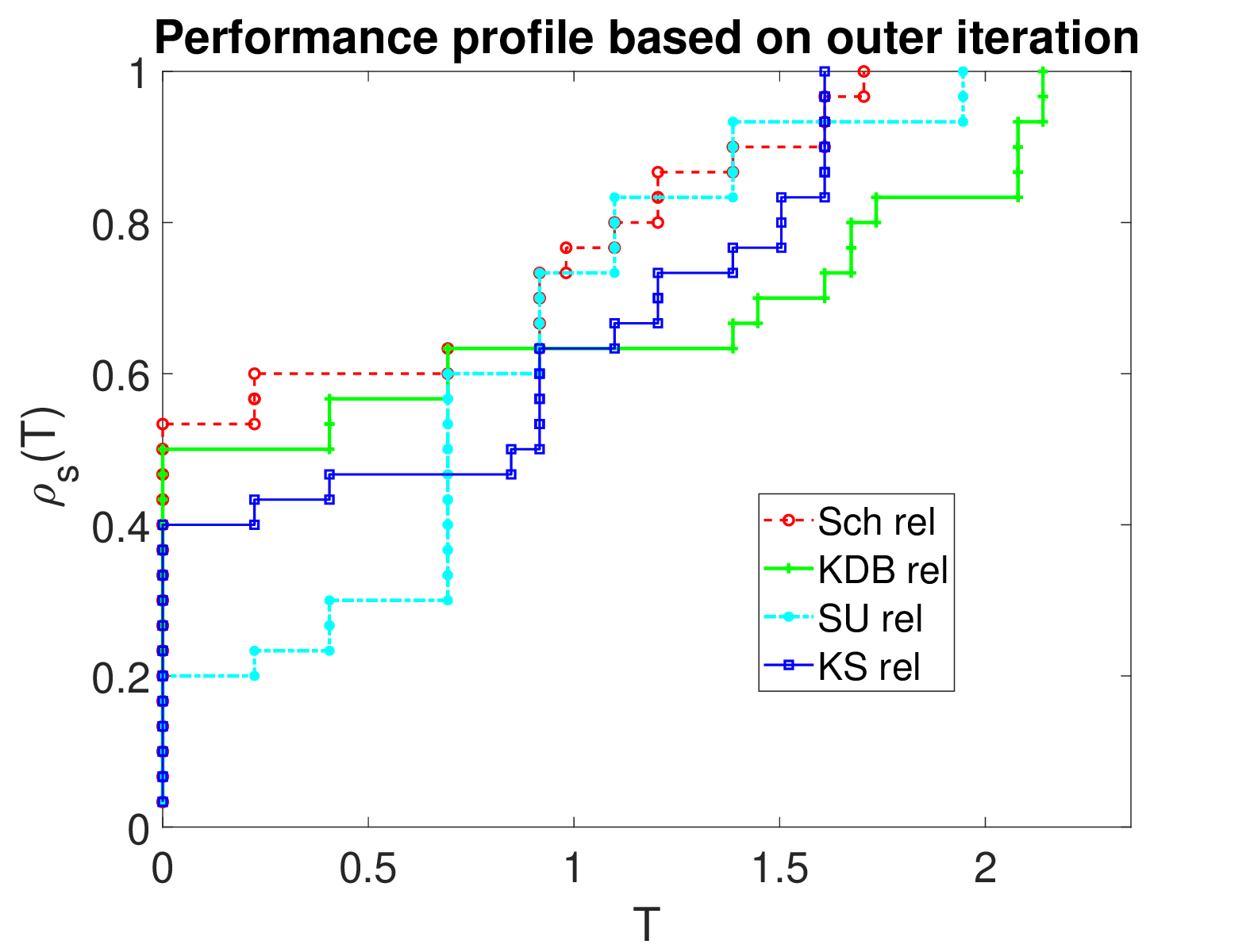}
     \end{subfigure}
     \hfill
     \begin{subfigure} 
         \centering
         \includegraphics[width=0.3\textwidth]{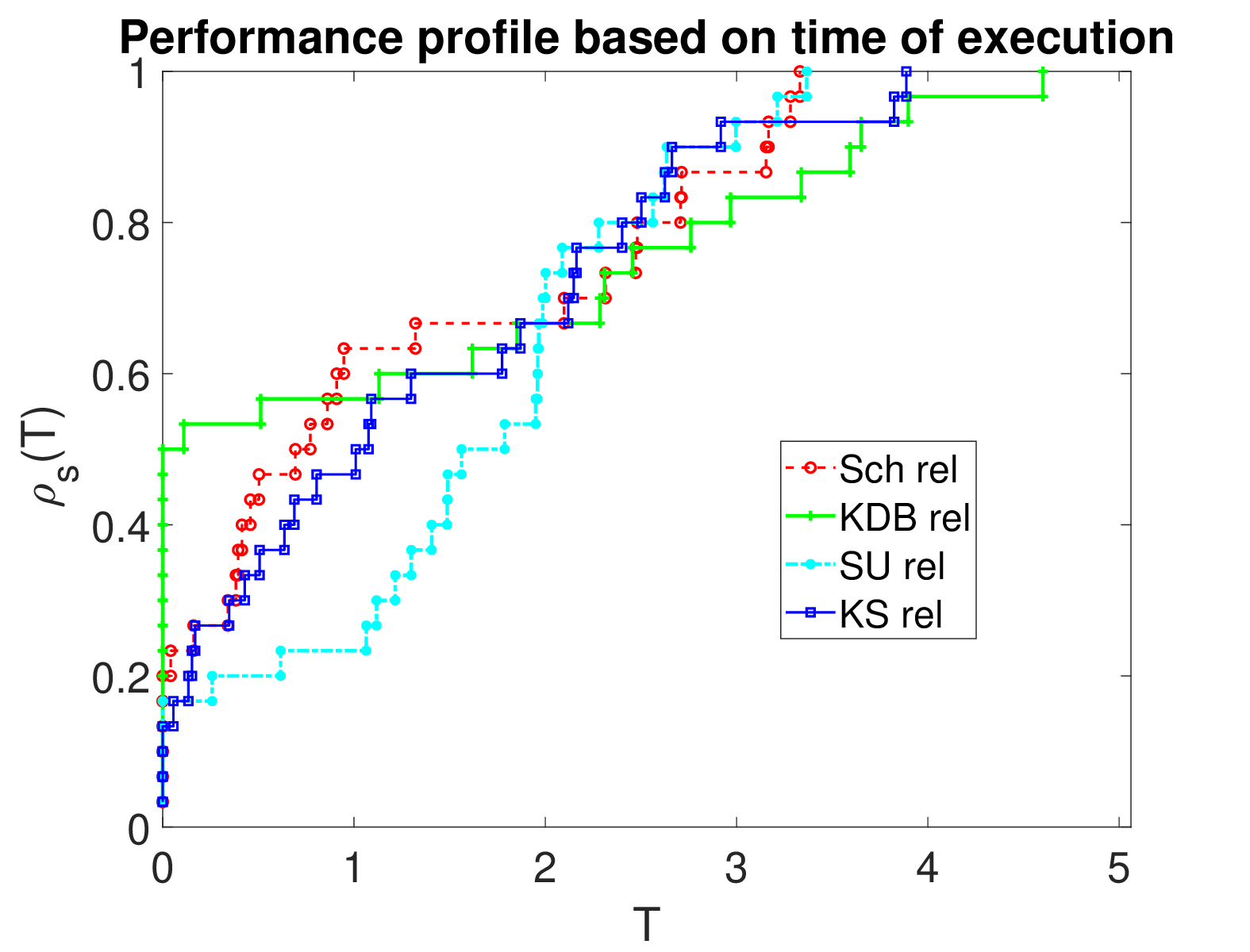}
     \end{subfigure}
     \hfill
     \begin{subfigure} 
         \centering
         \includegraphics[width=0.3\textwidth]{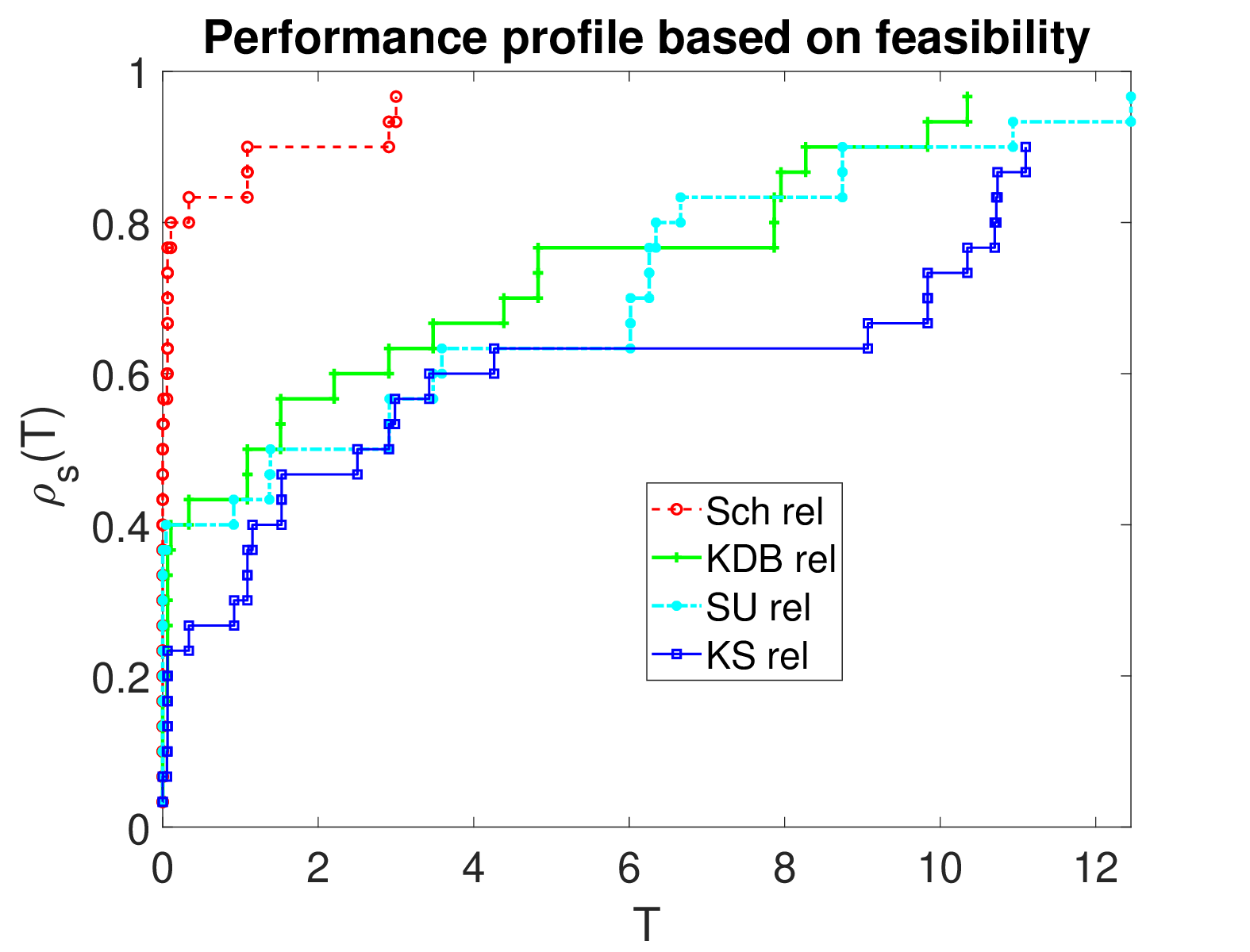}
     \end{subfigure}
      \caption{Comparison of all the relaxation methods on examples satisfying convexity and regularity conditions.} 
       \label{figure2} 
\end{figure}

We conduct a comparison of method performance across all relaxation methods for both detailed and compact systems. It is worth noting that the LF method lacks a compact version. For each example, we employ 10 random starting points, resulting in 30 instances in the first experiment and 100 instances in the second experiment for the comparative analysis. The performance of the methods is then evaluated based on several metrics, including the \textit{number of outer iterations} (i.e., not counting the iterations necessary to   {solve} the subproblem in Step 1 of Algorithm \ref{algorithm1-S}), execution time, the count of inner iterations (representing the number of iterations to solve the system \eqref{EquFinal} for fixed $\epsilon >0$ and $t>0$), and the experimental order of convergence (EOC), which is defined by 
\[
\text{EOC}  := \max \left\{ \frac{\log\|\Psi^{\epsilon,t}_{\mathcal{R}}(\zeta^{K-1})\|}{\log\|\Psi^{\epsilon,t}_{\mathcal{R}}(\zeta^{K-2})\|}, \frac{\log\|\Psi^{\epsilon,t}_{\mathcal{R}}(\zeta^{K})\|}{\log\|\Psi^{\epsilon,t}_{\mathcal{R}}(\zeta^{K-1})\|} \right\}.
\]
We also assess the number of instances in which the algorithm achieves a C-stationary point, along with the accuracy of the obtained solution. The accuracy is  determined by calculating $|F_{pes}(x, y) - F(x^*, y^*)|$, where $F_{pes}(x, y)$ represents the upper-level objective value of the optimal solution to the pessimistic bilevel problem reported in \cite{WiesemannTsoukalasKleniatiRustem2013}, and $F(x^*, y^*)$ corresponds to the optimal solution resulting from the algorithm. 

To verify the feasibility of a point obtained from Algorithm \ref{algorithm1-S}, we examine whether the solution satisfies the conditions $u_j \geq 0$ and $-g_j(x, y) \geq 0$ for $j=1, \dots, q$, using a tolerance level of $10^{-4}$, taking into account that $\mathcal{L}(x, y, u)$ is already involved  in the optimality condition from \eqref{KKT system}. The primary stopping criterion for the algorithm is $\|\Phi^{t_k}_{\mathcal{R}}(\zeta^k)\| < \varepsilon$ in the context of Step 1. 

\begin{table}[h]
   \centering
    \begin{tabular}{ccccccccccc}
         \toprule
          & \multicolumn{2}{c}{Scholtes} 
          & LF 
          & \multicolumn{2}{c}{KDB}
          & \multicolumn{2}{c}{SU}
          & \multicolumn{2}{c}{KS} \\
          \cline{2-3} \cline{5-6} \cline{7-8} \cline{9-10} \\
          & Detail & Comp. & Detail & Detail & Comp. & Detail & Comp. & Detail & Comp. \\
   \midrule 
   Average outer iter & 6.9 & 4.7 & 2.0 & 5.67 & 7.5 & 4.8 & 5.23 & 9.47 & 5.73 \\
   Average time & 0.64 & 0.22 & 0.08 & 0.39 & 0.41 & 0.25 & 0.15 & 0.60 & 0.19  \\
   Average inner iter & 505.23 & 189.27 & 64.4 & 291.73 & 319.63 & 172.83 & 101.7 & 463.5 & 129.43 \\
   Average accuracy & 0.40 & 0.43 & 0.46 & 0.55 & 0.57 & 0.82 & 0.69 & 0.71 & 1.34\\
   C-stationarity & 6 & 13 & 0 & 0 & 0 & 0 & 0 & 0 & 0 \\
   Feasibility (\%) & 43.3 & 83.3 & 83.3& 66.7 & 73.3 & 70& 6& 43.3 & 59.7 \\
   EOC $\leq$ 1 & 22 & 29 & 30 & 20& 20& 23 & 22& 26& 29 \\
   EOC $>$ 1& 8 & 1 & 0 & 10 & 10 & 7 & 8 & 4 & 1 \\
   \bottomrule
    \end{tabular}
    \caption{Numerical results for examples satisfying convexity and regularity conditions}
    \label{tab1}
\end{table}

However, an additional condition has been incorporated to serve as a safeguard for the algorithm when dealing with potential ill-behavior.
 In practice, we set $\varepsilon = 10^{-7}$ and stops the algorithm if
\[
\|\Psi^{\epsilon,t}_{\mathcal{R}}(\zeta^k)\| < \varepsilon \quad \mbox{or} \quad | \|\Psi^{\epsilon,t}_{\mathcal{R}}(\zeta^{k-1})\| - \|\Psi^{\epsilon,t}_{\mathcal{R}}(\zeta^{k})\| | < 10^{-9}.
\]
For initialization, we take $t^{0} = 10^{-3}$, $\theta = 0.05$, $u^0 = 1_q$, $\delta^0 = 1_p$, $\alpha^0 = 1_p$, $\beta^0 = 1_m$, $\mu^0 = 1_q$, while $x^0 \in \mathbb{R}^n$ and $y^0 \in \mathbb{R}^m$ are chosen randomly, as described above. We also take $\epsilon = 10^{-3}$ for the perturbation in \eqref{perb}.  
    
The experimental results are analyzed using the performance profile framework introduced in \cite{Dolan&More}. Let $t_{s,i} > 0$ denote the performance measure for solver $s \in \mathcal{S}$ when solving instance $i \in \mathcal{I}$. Here, $\mathcal{S}$ represents the set of solvers, and $\mathcal{I}$ denotes the set of problem instances corresponding to various starting points. The performance ratio for solver $s$ on instance $i$ is defined as 
\[
 r_{s,i} := \frac{t_{s,i}}{\min_{s' \in \mathcal{S}} t_{s',i}} \;\,\mbox{ for }\;  s \in \mathcal{S},\;\, i \in \mathcal{I}. 
\]
This ratio compares the performance of solver $s$ to the best performance achieved by any solver in $\mathcal{S}$ for the same problem instance $i$.
To evaluate the solver performance across all problem instances, we use the cumulative distribution function $\rho_s$ for solver $s \in \mathcal{S}$ that can be written as 
\[
\rho_s(T) := \frac{|\{i \in \mathcal{I} \mid r_{s,i} \leq T\}|}{|\mathcal{I}|} \;\,\mbox{ for }\;  T \in [1, \infty).
\]
This function $\rho_s(T)$ represents the proportion of instances where the performance of solver $s$ is within a factor $T$ of the best solver's performance on those instances.

The performance profile is a plot of $\rho_s(T)$ for all solvers $s \in \mathcal{S}$. The value $\rho_s(1)$ represents the percentage of problem instances where solver $s$ achieves the best performance, while for arbitrary $T \geq 1$, $\rho_s(T)$ indicates the percentage of instances where solver $s$ shows at most the $T$-fold of the best performance.

\begin{table}[h]
    \centering
    \begin{tabular}{cccccccccc}
         \toprule
          & \multicolumn{2}{c}{Scholtes} 
          & LF 
          & \multicolumn{2}{c}{KDB}
          & \multicolumn{2}{c}{SU}
          & \multicolumn{2}{c}{KS} \\
          \cline{2-3} \cline{5-6} \cline{7-8} \cline{9-10} \\
          & Detail & Comp. & Detail & Detail & Comp. & Detail & Comp. & Detail & Comp. \\
   \midrule 
   Average outer iter & 6.12 & 5.56 & 2.0 & 8.73 & 7.98 & 4.37 & 4.85 & 8.38 & 6.84 \\
   Average time & 0.56 & 0.49 & 0.11 & 1.16 & 0.69 & 0.25 & 0.19 & 0.63 & 0.62  \\
   Average inner iter & 370.9 & 368.4 & 78.28 & 835.65 &552.19 & 159.86 & 127.9 & 409.97 & 455.46 \\
   Average accuracy &  0.48 & 0.48 & 0.88 & 1.18 & 1.19 & 1.46 & 1.10 & 1.09 & 0.66\\
   C-stationarity & 8 & 40 & 0 & 0 & 0 & 0 & 0 & 0 & 0  \\
   Feasibility (\%) & 18 & 51 & 78 & 65 & 64 & 67 & 3 & 30 & 56.9 \\ 
   EOC $\leq$ 1 & 85 & 86 & 100 & 85 & 80 & 62 & 62 & 88 & 90 \\
   EOC $>$ 1& 15 & 14 & 0 & 15 & 20 & 38 & 38 & 12 & 10 \\
   \bottomrule
    \end{tabular}
    \caption{Numerical results for examples that do not satisfy convexity or regularity conditions}
    \label{tab2}
\end{table}
\begin{figure}[h]\label{fig1}
     \centering
     \begin{subfigure} 
         \centering
         \includegraphics[width=0.3\textwidth]{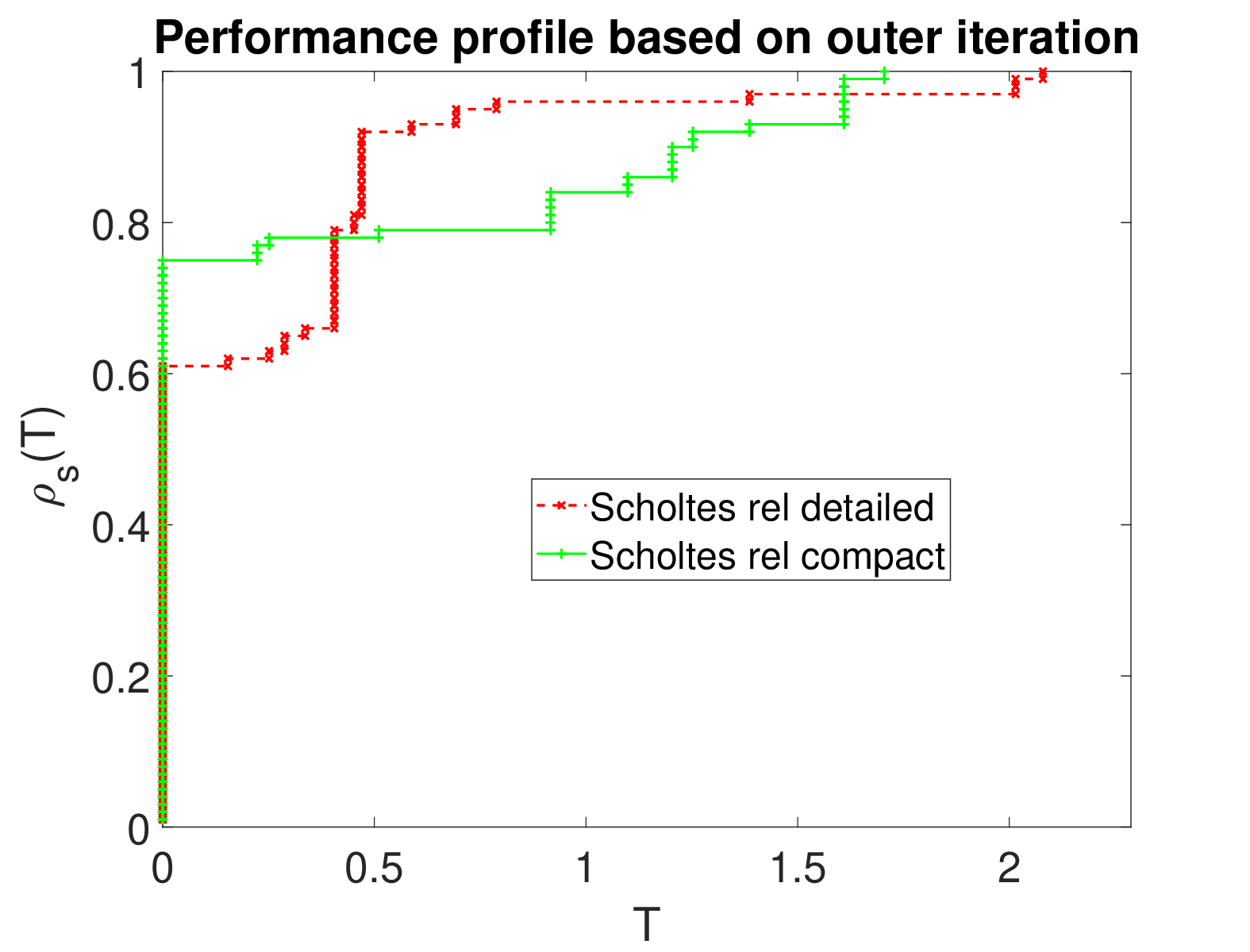}
     \end{subfigure}
\hfill
     \begin{subfigure} 
         \centering
         \includegraphics[width=0.3\textwidth]{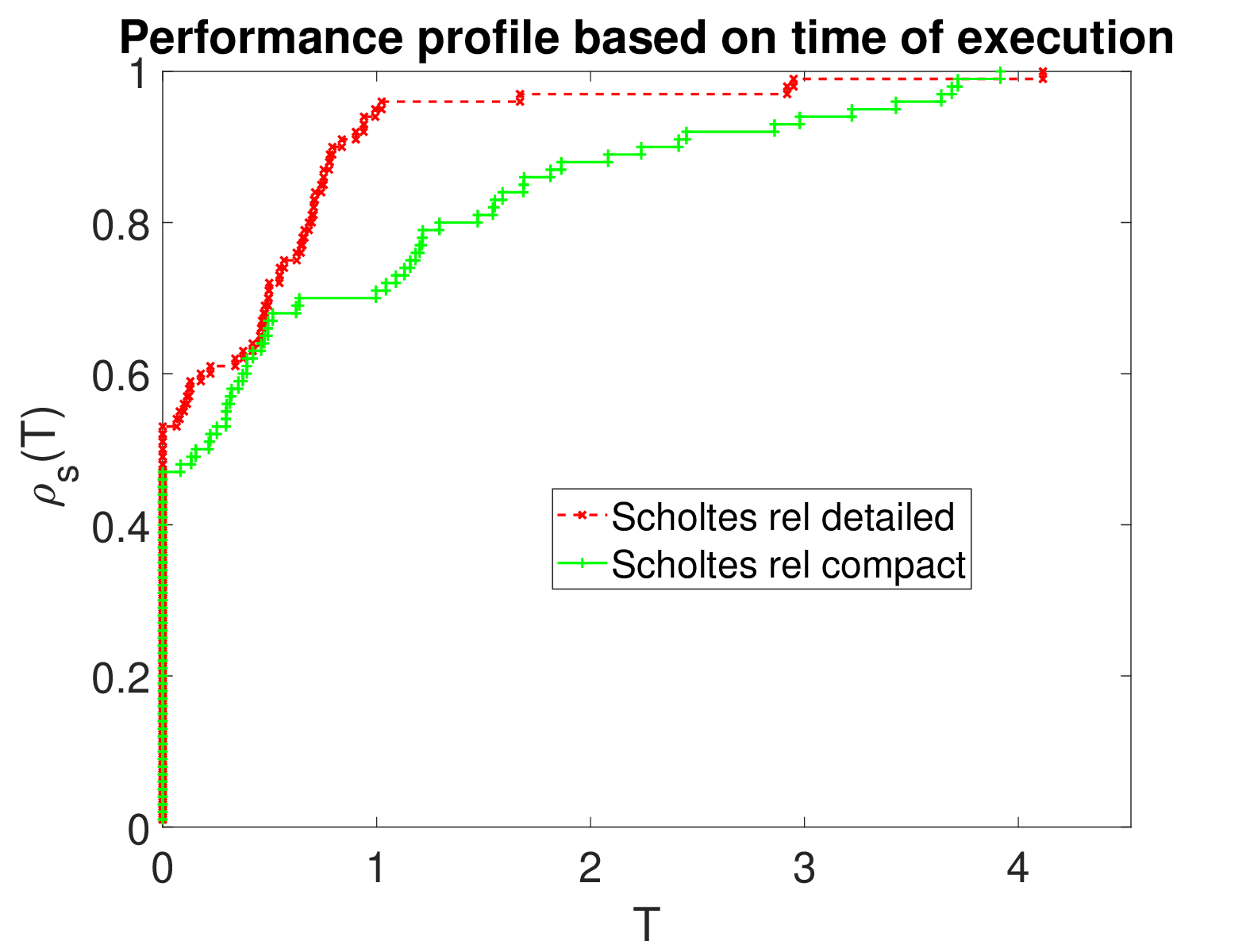}
     \end{subfigure}
  \hfill
     \begin{subfigure} 
         \centering
         \includegraphics[width=0.3\textwidth]{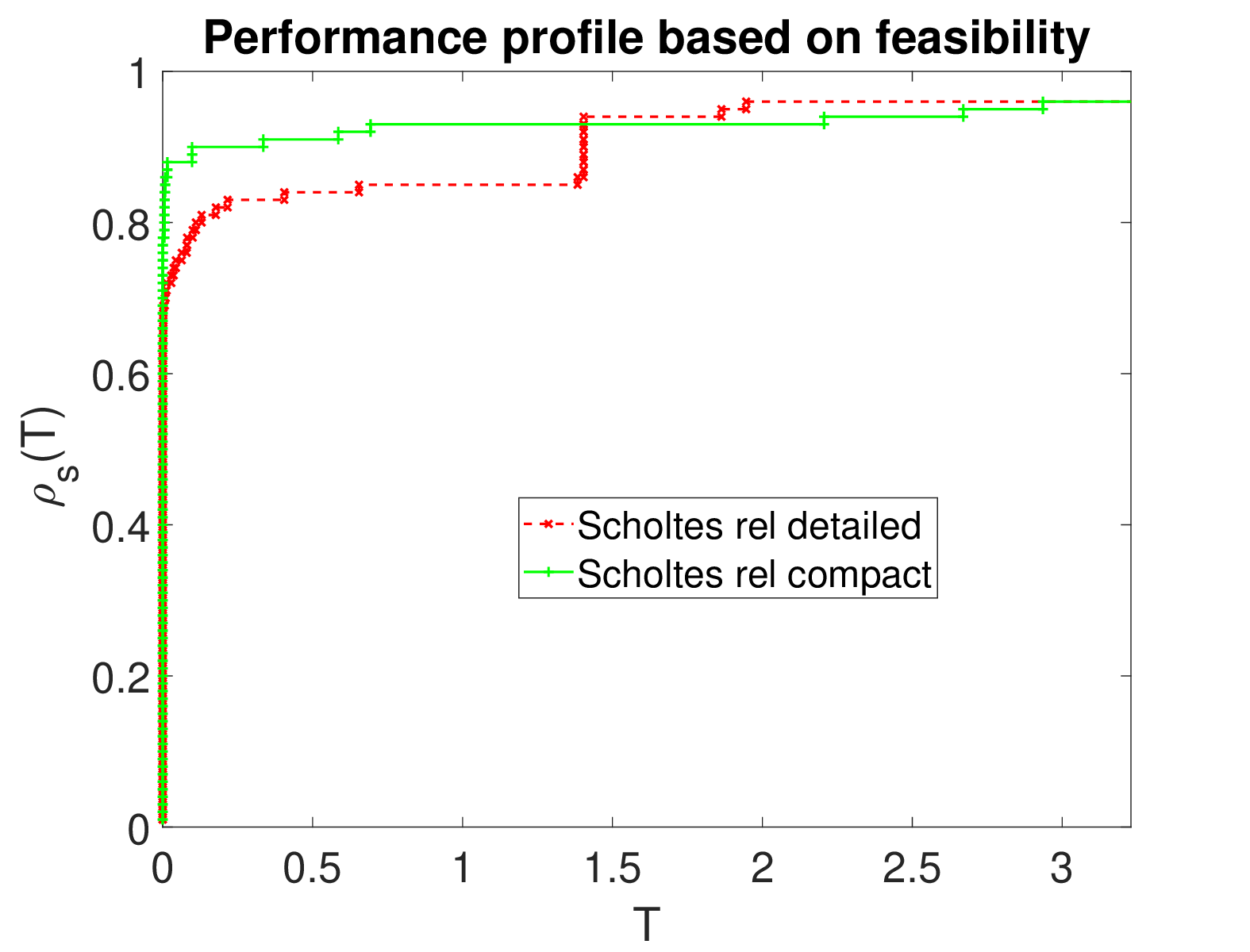}
     \end{subfigure}     
 \hfill     
    \begin{subfigure} 
         \centering
         \includegraphics[width=0.3\textwidth]{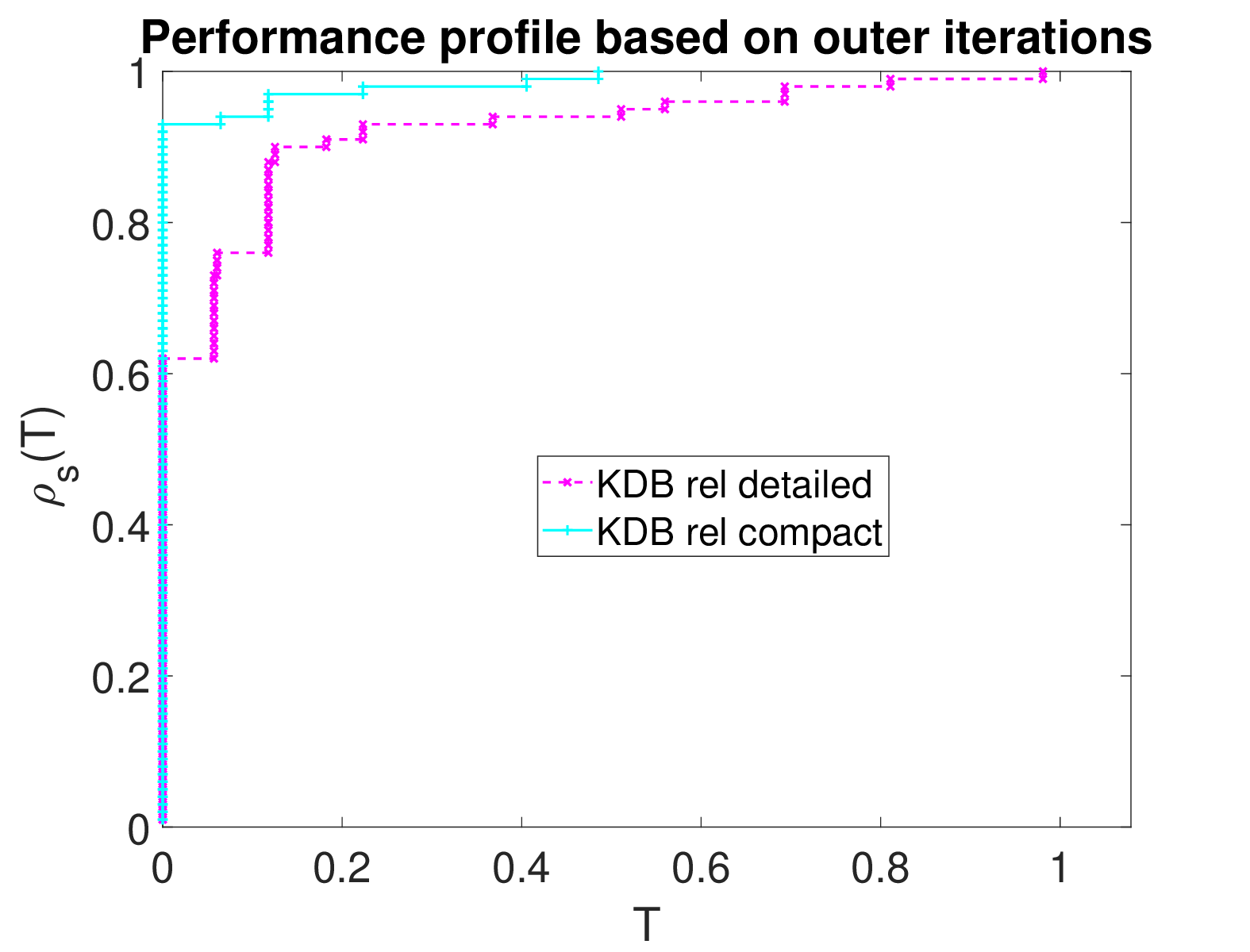}
     \end{subfigure}
\hfill
     \begin{subfigure} 
         \centering
         \includegraphics[width=0.3\textwidth]{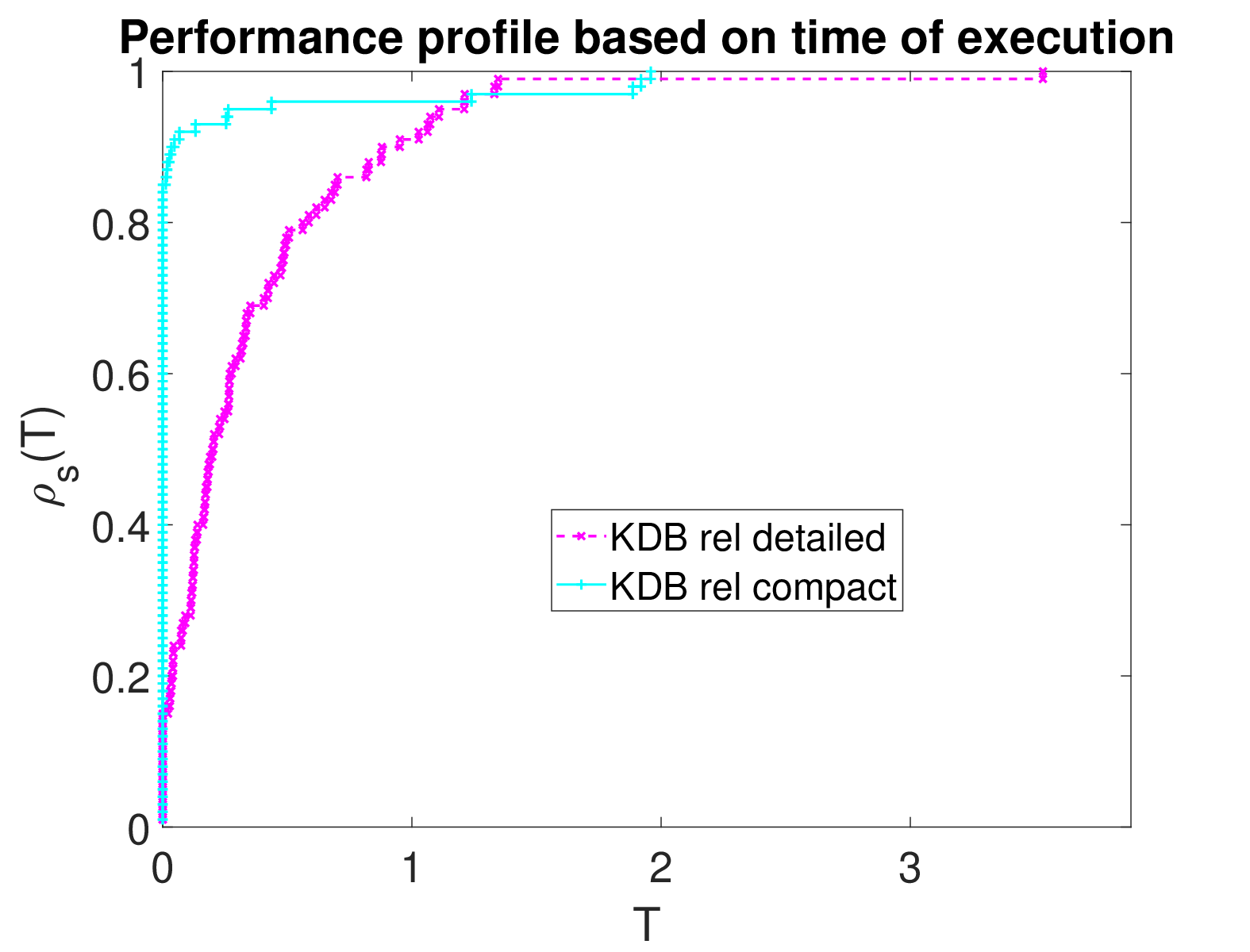}
     \end{subfigure}
  \hfill
     \begin{subfigure} 
         \centering
         \includegraphics[width=0.3\textwidth]{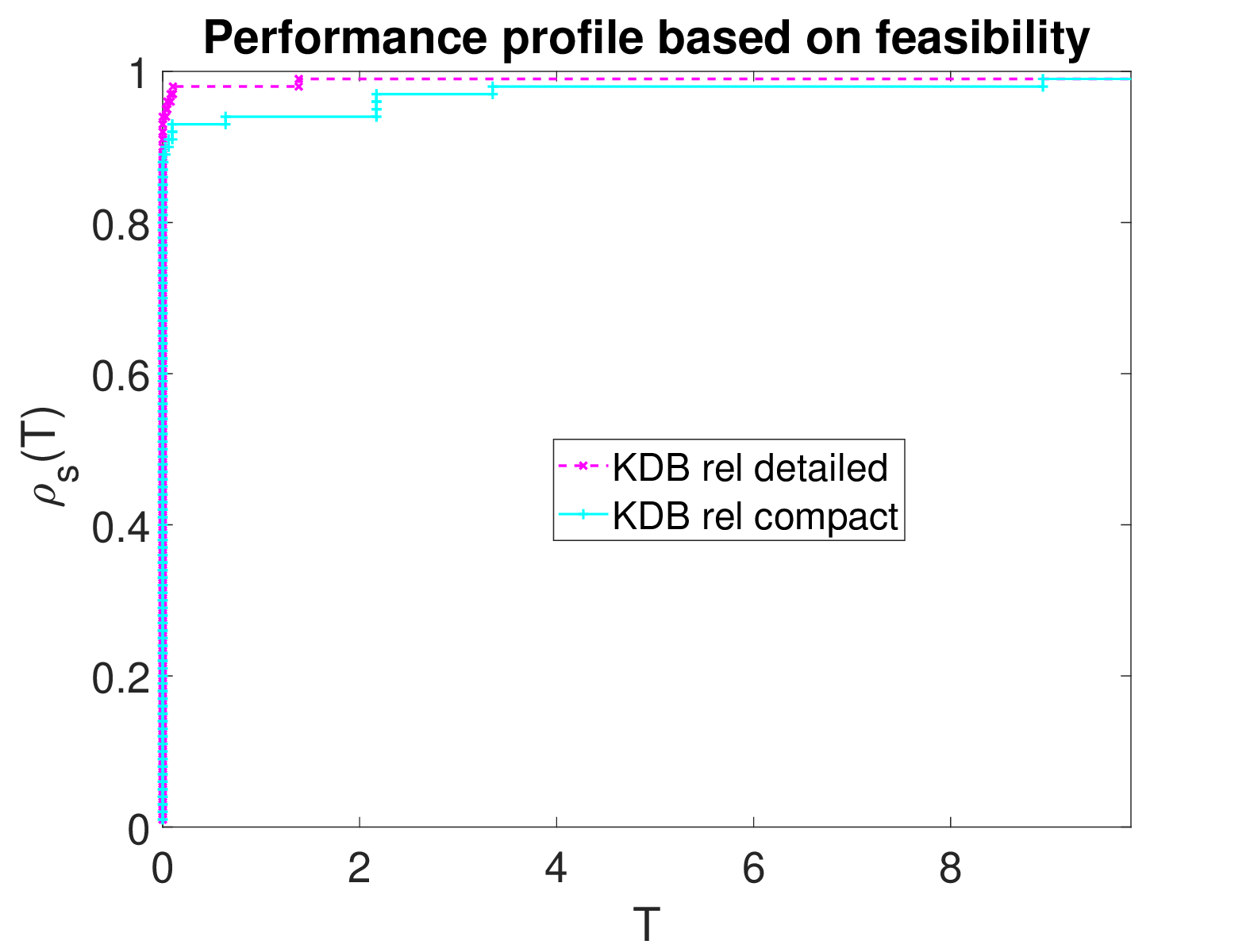}
     \end{subfigure}  
  \hfill    
       \begin{subfigure} 
         \centering
         \includegraphics[width=0.3\textwidth]{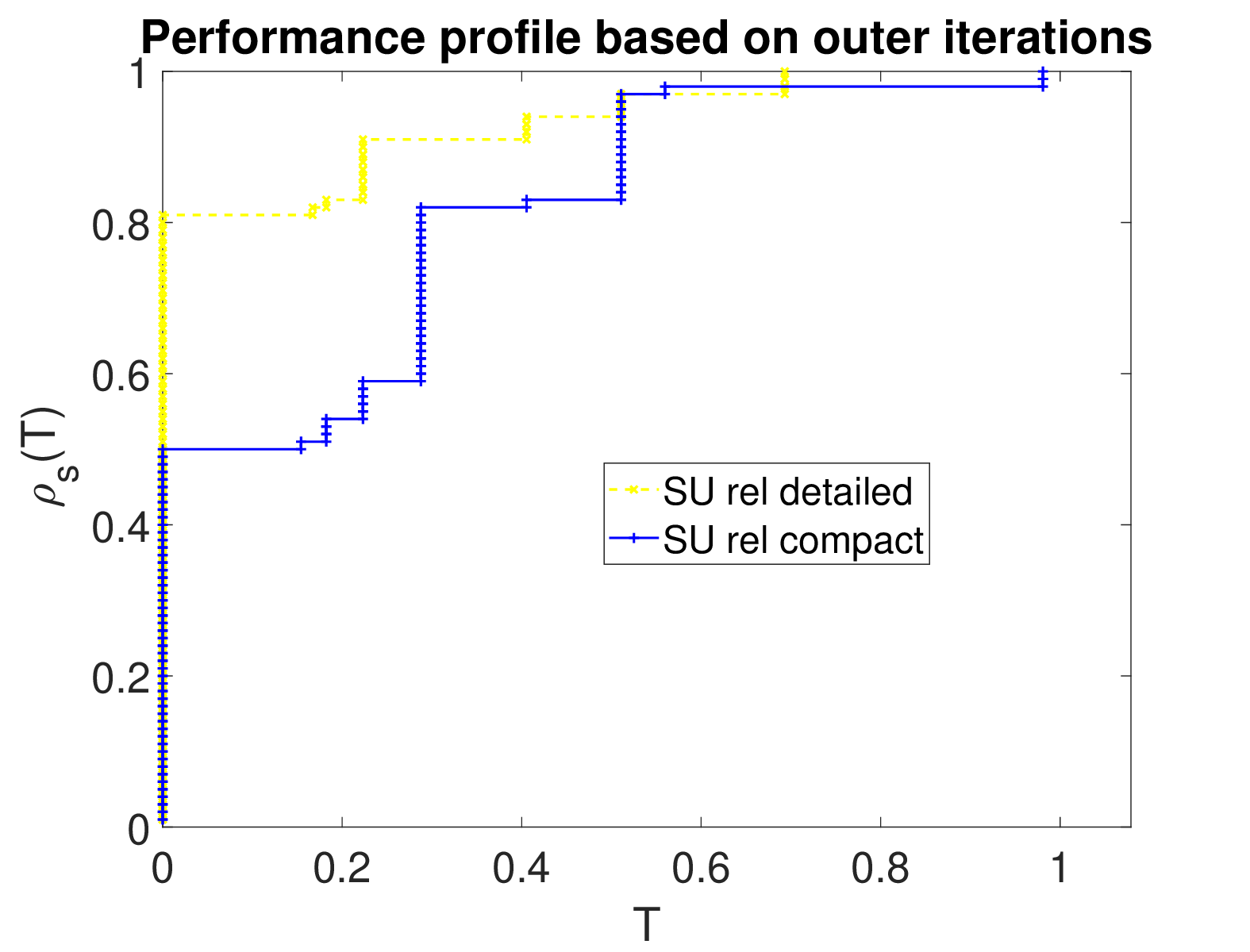}
     \end{subfigure}     
 \hfill     
    \begin{subfigure} 
         \centering
         \includegraphics[width=0.3\textwidth]{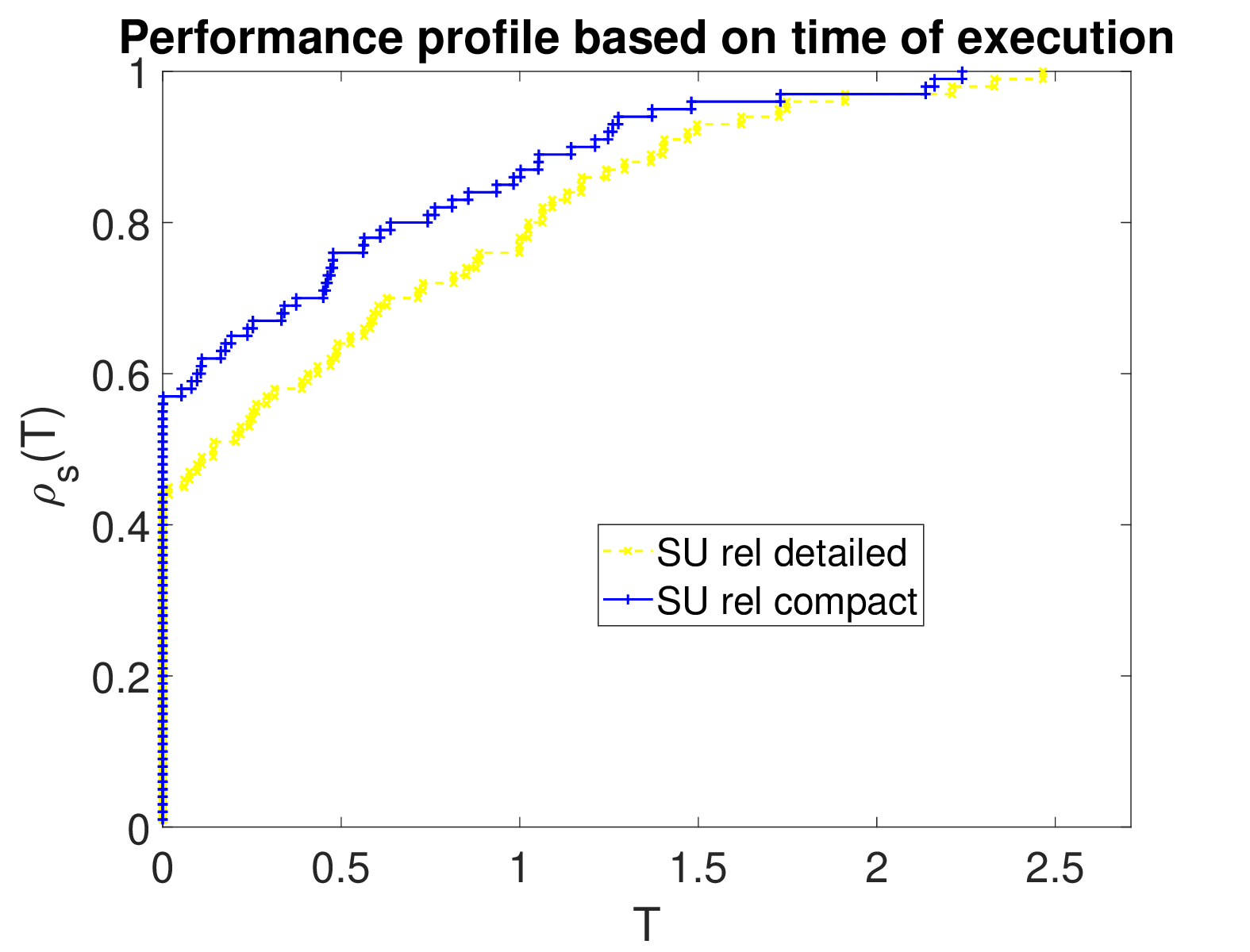}
     \end{subfigure}
\hfill
     \begin{subfigure} 
         \centering
         \includegraphics[width=0.3\textwidth]{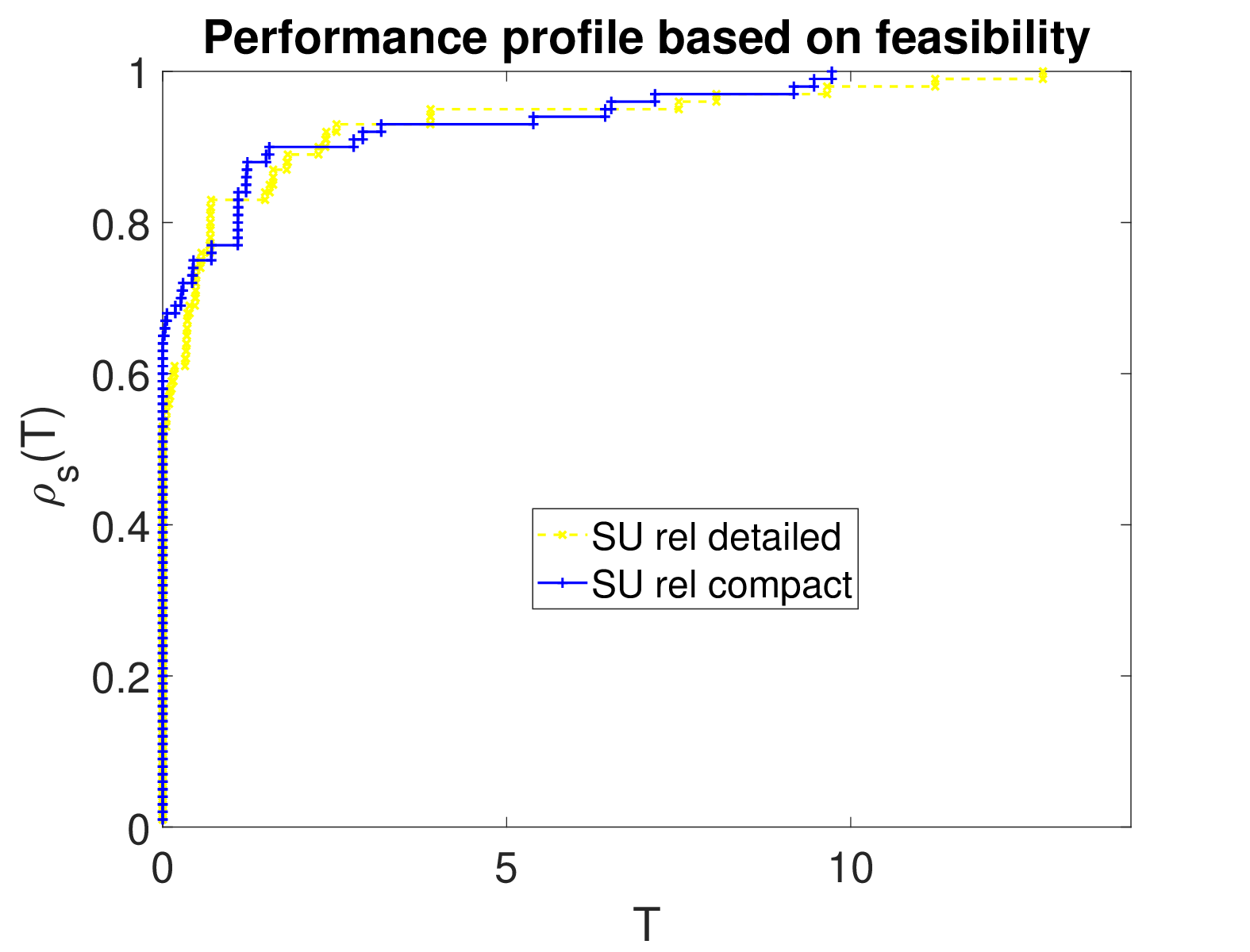}
     \end{subfigure}
  \hfill
     \begin{subfigure} 
         \centering
         \includegraphics[width=0.3\textwidth]{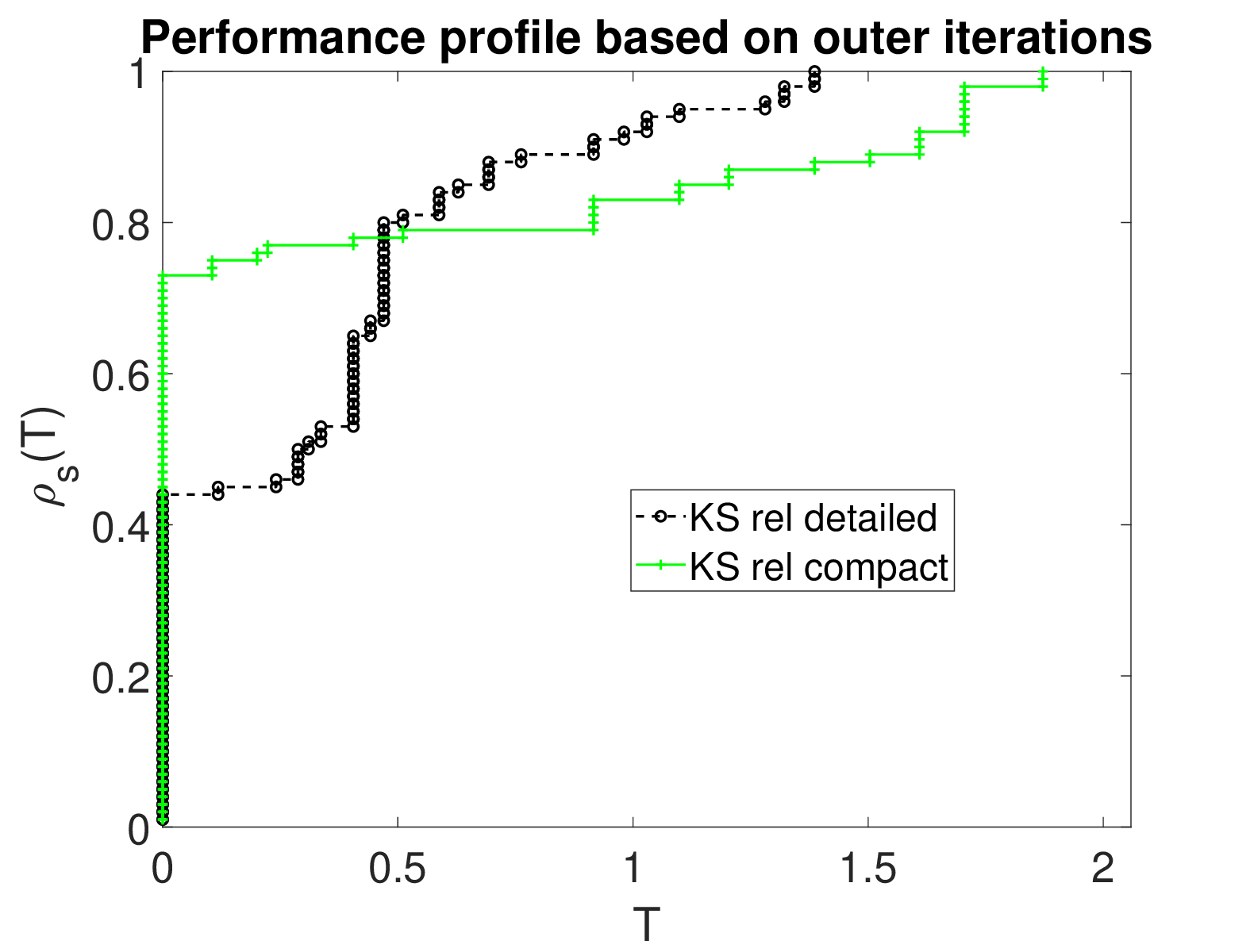}
     \end{subfigure}  
     \hfill
     \begin{subfigure} 
         \centering
         \includegraphics[width=0.3\textwidth]{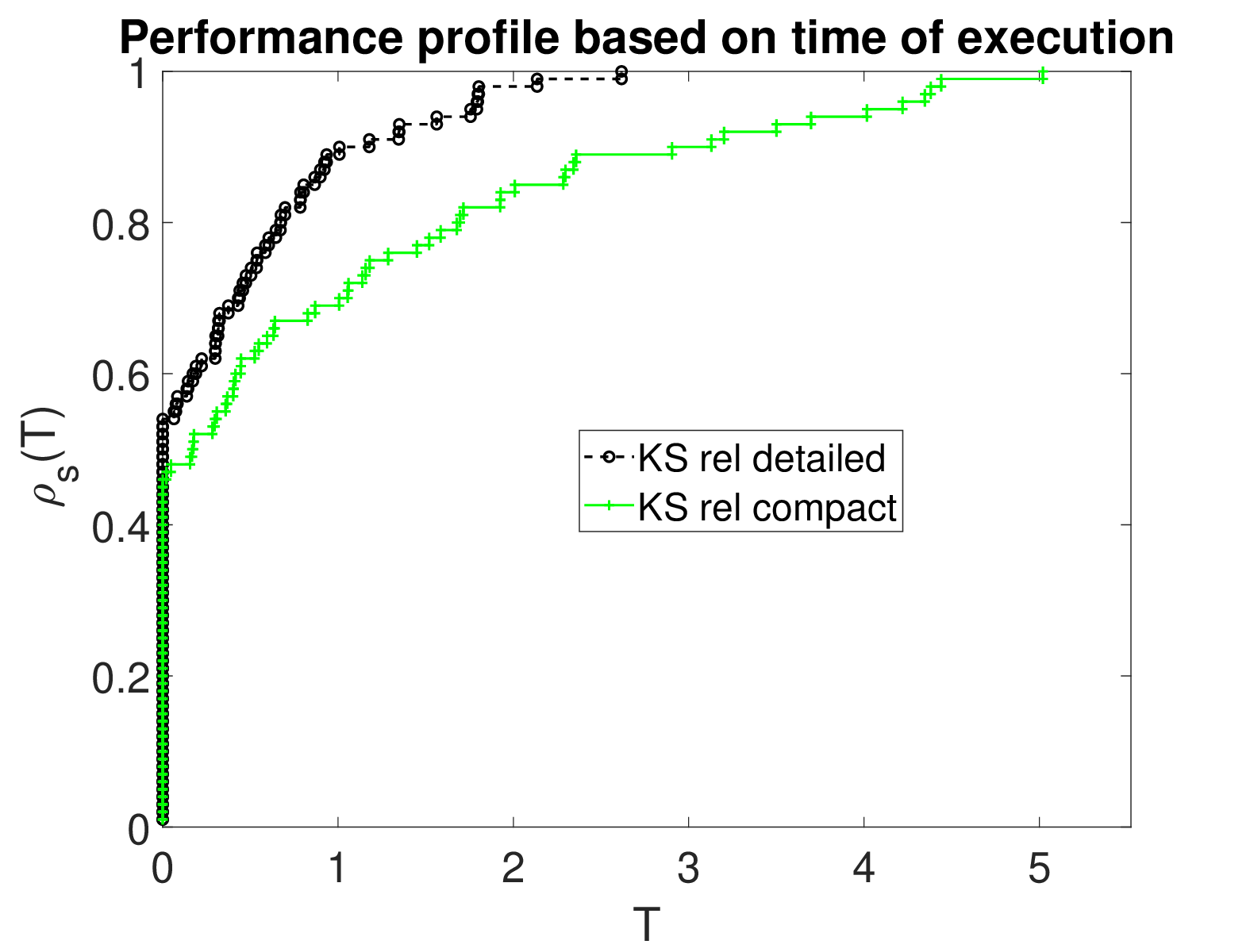}
     \end{subfigure}
  \hfill
     \begin{subfigure} 
         \centering
         \includegraphics[width=0.3\textwidth]{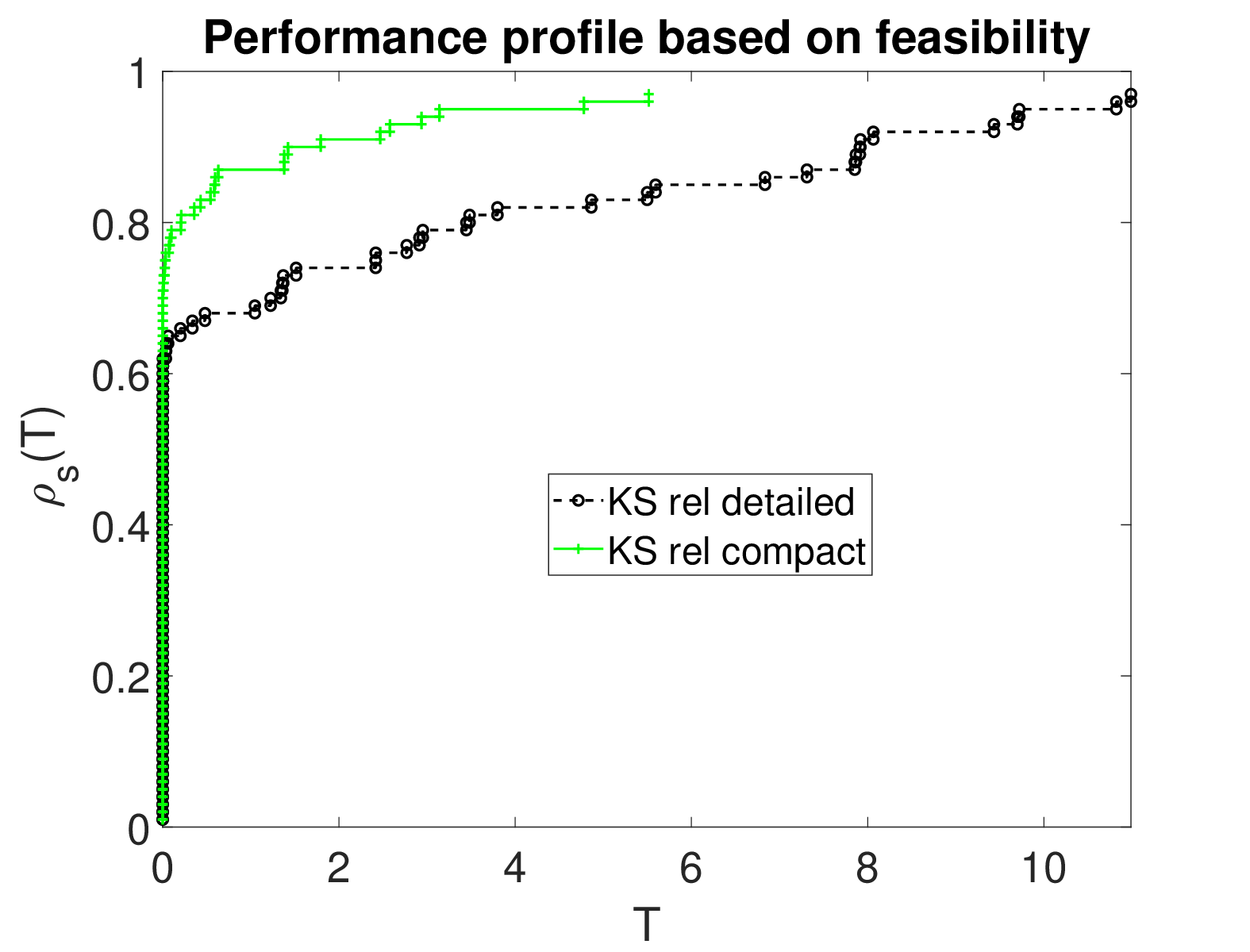}
     \end{subfigure} 
      \caption{Comparison of the detailed and compact form of the relaxation methods with examples that do not satisfy convexity and regularity conditions. From Top: From Top: Scholtes rel(axation), KDB rel(axation), SU rel(axation) and KS methods rel(axation).} 
       \label{figure3} 
\end{figure}
\subsection{Experiment I: Problems  satisfying convexity and linearity constraints }
In this experiment, we utilized 10 random starting points for each of the three examples ({mb\_1\_1\_06}, {mb\_1\_1\_10}, and {mb\_1\_1\_17}), resulting in a total of 30 instances. The graphical comparison can be seen in Figures \ref{figure1} and \ref{figure2}. 
In the evaluation across all five relaxations for the detailed systems, note that the LF method exhibits falsely superior performance compared to the other methods. This is attributed to the fact that the LF method terminated prematurely due to the additional stopping criterion included in the implementation. Excluding the LF method, it is noteworthy that KDB has the smallest number of outer iterations in approximately 60\% of the instances, followed by Scholtes (38\%), KS (22\%), and SU, which exhibited the highest number of iterations for the majority of instances. A comparable pattern is also evident in the execution time across the methods.  Furthermore, the Scholtes relaxation successfully computes solutions satisfying C-stationarity conditions in 6 instances, whereas the points computed by the other relaxations do not meet C-stationarity conditions in any of the instances. Additionally, the number of instances for which each algorithm successfully computes a C-stationary point is detailed in Table \ref{tab1}. In terms of accuracy, the Scholtes relaxation  successfully generates the known solution for approximately 70\% of the instances, followed by the LF method at 54\%. KDB and SU each achieves a success rate of 40\%, while KS reaches 30\%. Notably, each method computes the known optimistic solution for some instances, with Scholtes at about 58\%, LF at 30\%, KDB at 23\%, SU at 30\%, and KS at 30\%. In conclusion, despite the Scholtes relaxation method's relatively lower performance in terms of iteration number and execution time, it consistently provides better solutions compared to other relaxations (i.e., LF, KDB, KS, and SU). Furthermore, the KS relaxation exhibits the poorest performance in terms of iteration number and execution time and often fails to yield good solutions to the problem.

For the compact system, the Scholtes relaxation exhibits the best performance in approximately 55\% of instances, followed by KDB at 50\%, KS at 40\%, and SU at 20\%. Regarding accuracy, it is evident that the Scholtes method computes the known solution for approximately 80\% of the instances, while KDB and SU achieves the known solution for about 40\% each, followed by KS at 28\%. These results indicate that, although the detailed system showed better performance in terms of the number of iterations and execution time for the S, KDB, and SU relexations, the compact versions of the systems exhibit superior performance regarding the practicality and precision. Additionally, across all five relaxations, the S one consistently showcases superior performance compared to the remaining ones. 

\begin{figure}
     \centering
     \begin{subfigure} 
         \centering
         \includegraphics[width=0.3\textwidth]{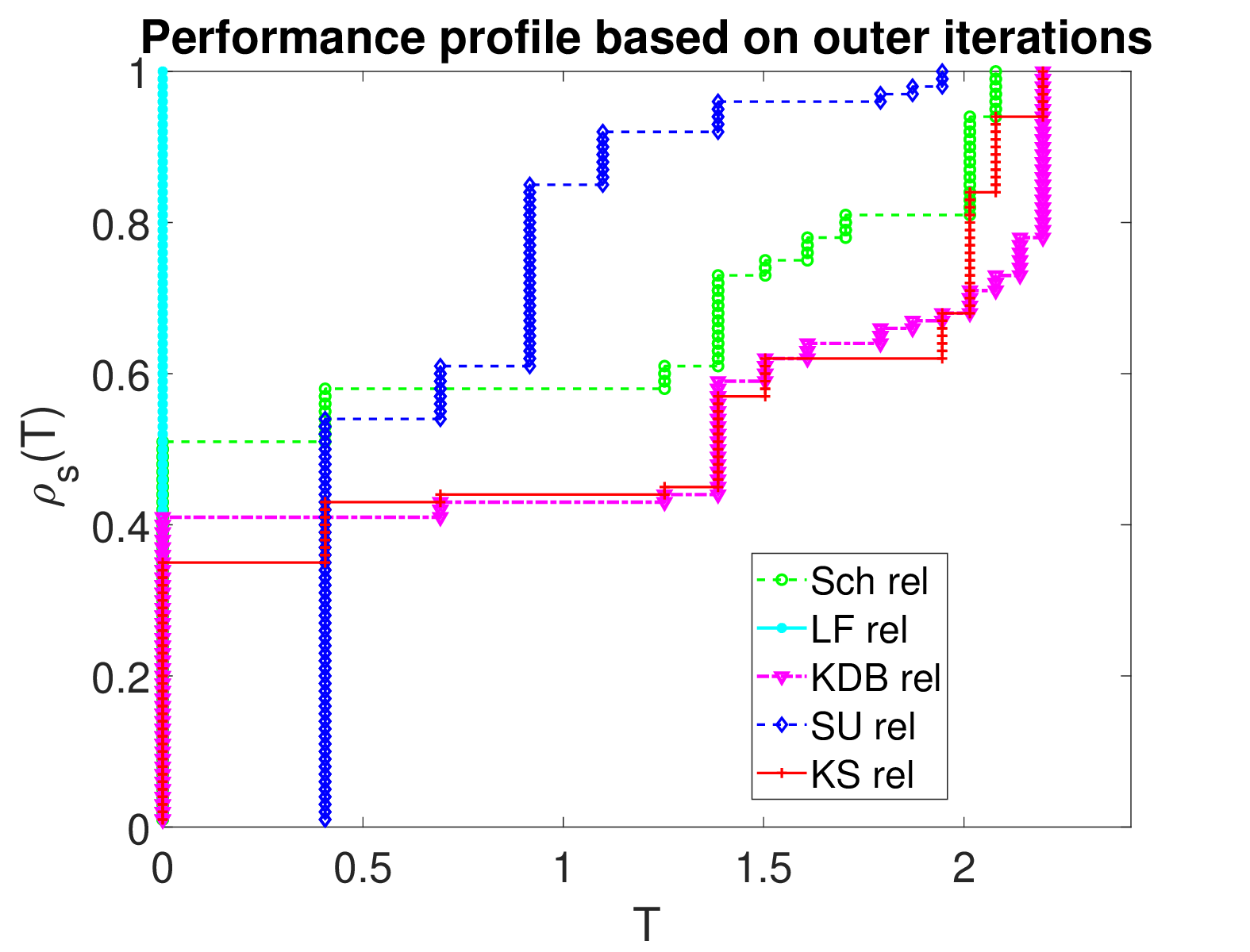}
     \end{subfigure}
     \hfill
  \begin{subfigure} 
         \centering
         \includegraphics[width=0.3\textwidth]{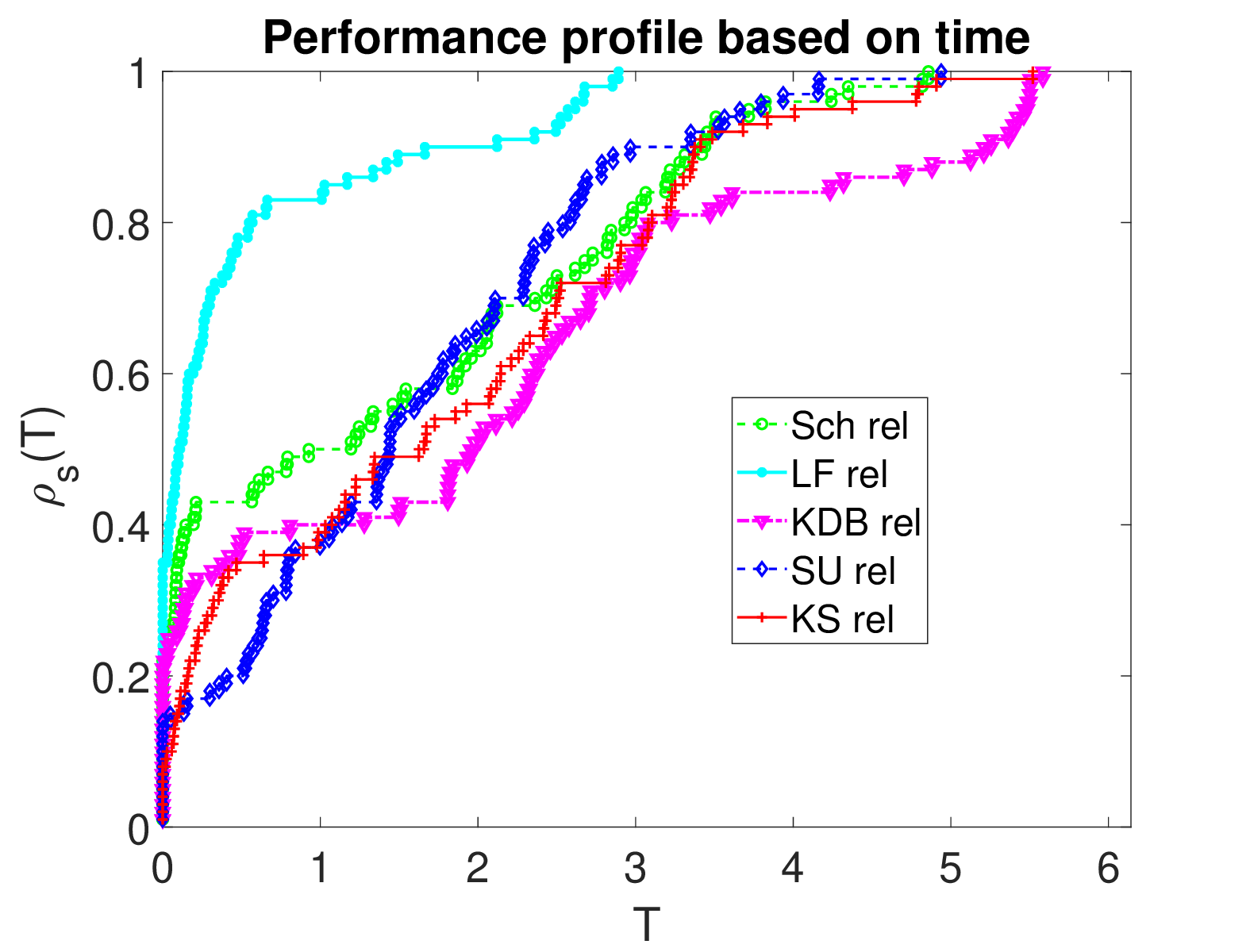}
     \end{subfigure}
     \hfill
     \begin{subfigure} 
         \centering
         \includegraphics[width=0.3\textwidth]{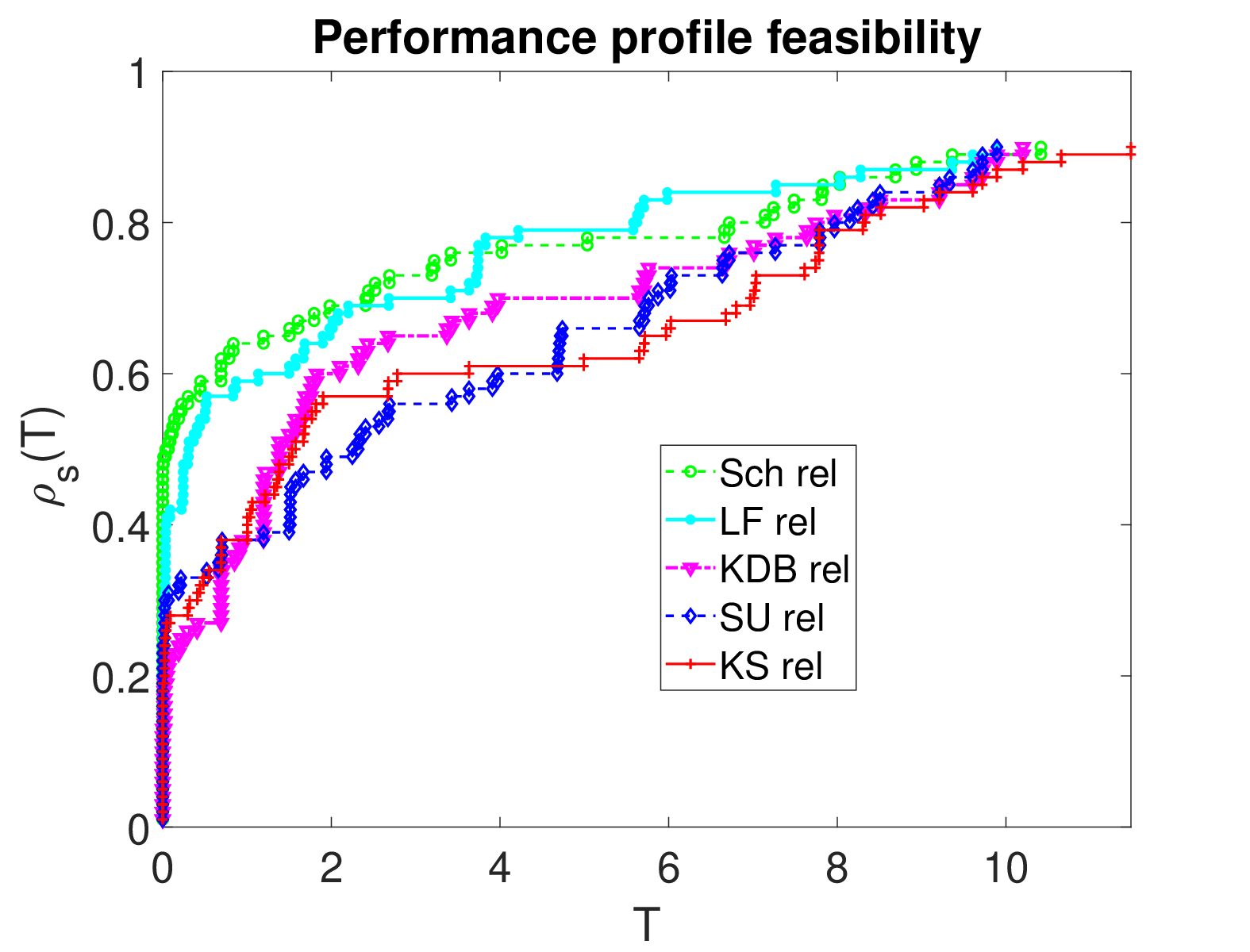}
     \end{subfigure}
     \hfill
       \begin{subfigure} 
         \centering
         \includegraphics[width=0.3\textwidth]{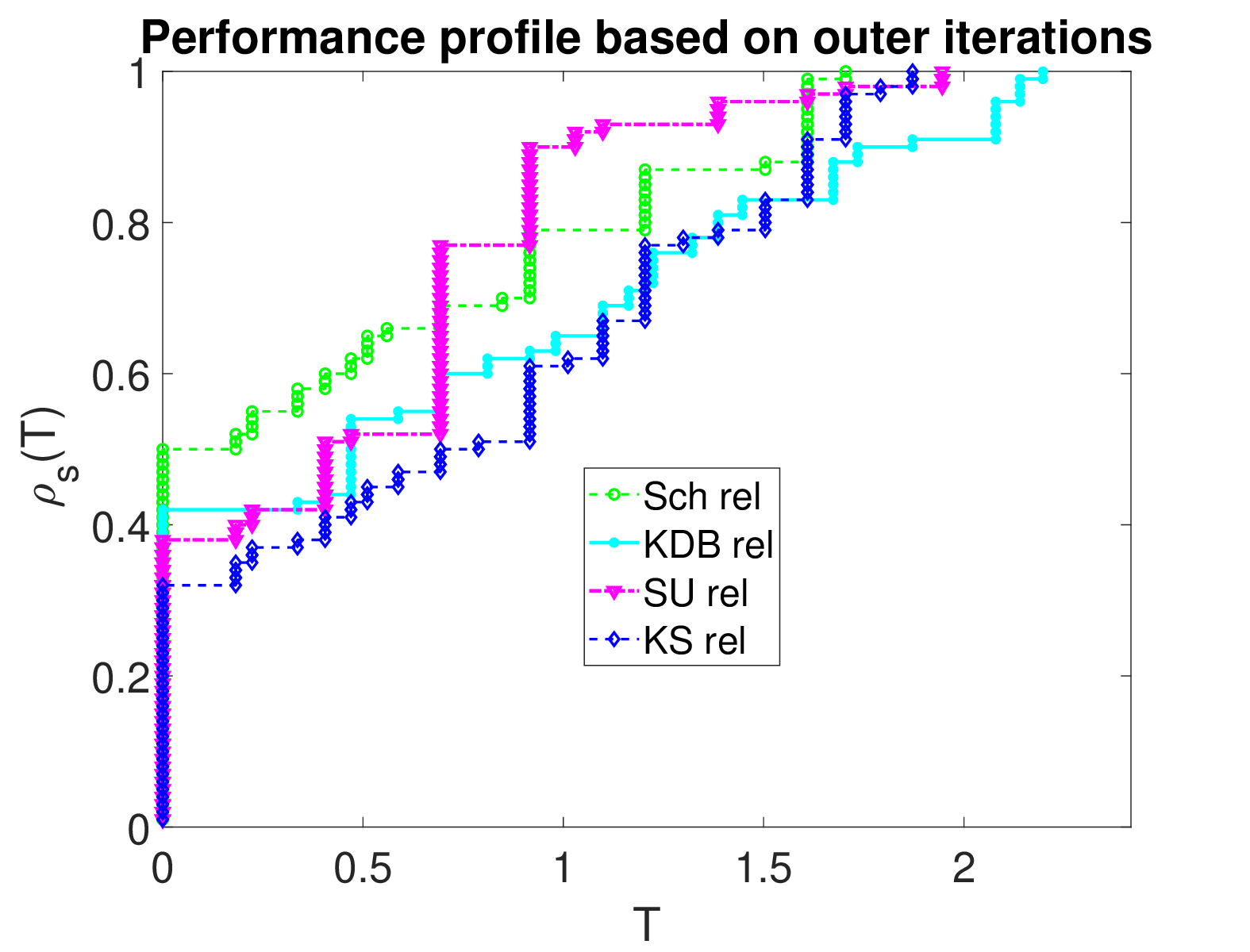}
     \end{subfigure}
     \hfill
      \begin{subfigure} 
         \centering
         \includegraphics[width=0.3\textwidth]{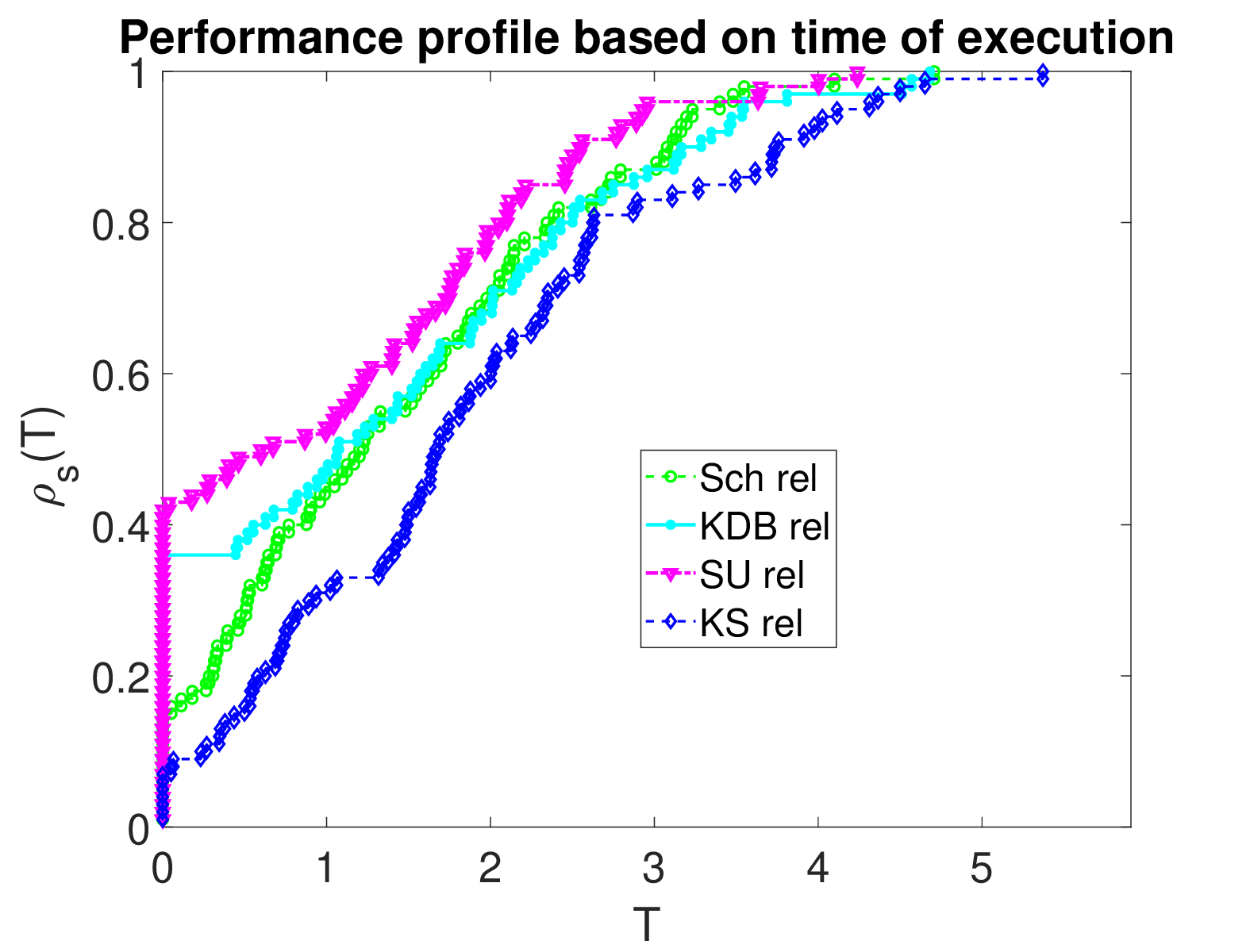}
     \end{subfigure}
     \hfill
       \begin{subfigure} 
         \centering
         \includegraphics[width=0.3\textwidth]{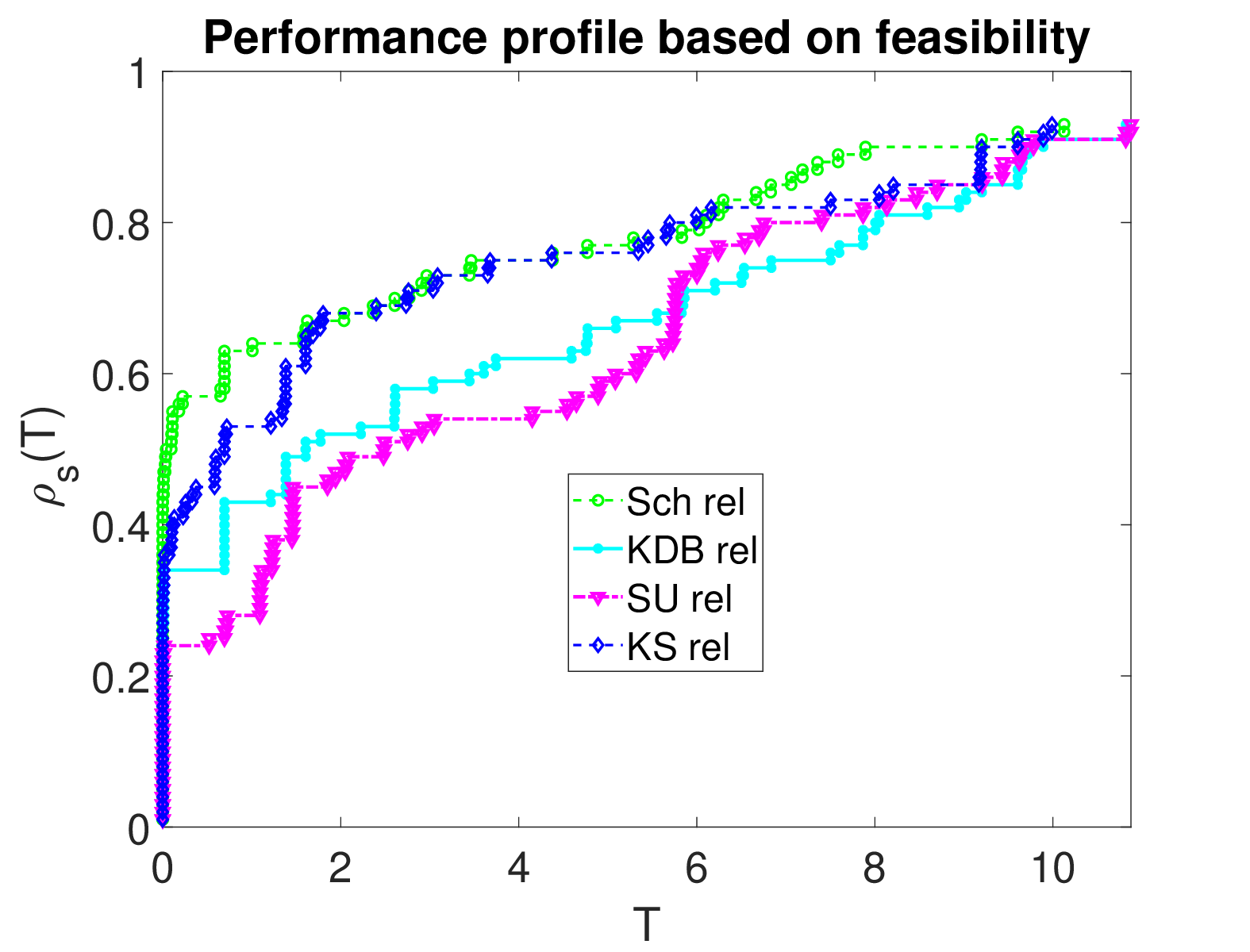}
     \end{subfigure}
     Numerical results for examples that do not satisfy convexity and regularity conditions across all relaxation methods.
     \label{Figure4}
\end{figure} 

\subsection{Experiment II: Problems that do not satisfy  convexity and linearity constraints}
In this experiment, we utilize nonconvex bilevel problem examples, specifically, \textrm{mb\_1\_1\_03}, {mb\_1\_1\_04}, {mb\_1\_1\_05}, {mb\_1\_1\_07}, {mb\_1\_1\_08}, {mb\_1\_1\_09}, {mb\_1\_1\_11}, {mb\_1\_1\_12}, {mb\_1\_1\_13}, and {mb\_1\_1\_14} from \cite{Mitsos2006}. The primary objective of this experiment is to analyze the performance of each relaxation technique when the conditions required for the KKT reformulation \eqref{MPCC} are not satisfied (see Section \ref{Introduction}). We employ 10 random starting points for each problem, resulting in 100 instances used for the comparison. Similar to the previous observations, it is noted that the LF relaxation exhibits falsely better performance due to the second stopping criterion used in the implementation. Excluding the LF method, the Scholtes method also demonstrates the best performance in about 54\% of the instances, followed by KDB at 40\%, KS at 32\%, while SU exhibits  the largest number of iterations for the majority of instances. With regards to accuracy, it is important to highlight that the relaxation methods successfully computed both the known solutions of the pessimistic and optimistic problems in several instances. Therefore, we can conclude that despite the failure to meet the required assumptions, the methods still produce satisfactory solutions for the problems.


\section*{Acknowledgments}
The work of KN is supported by the University of Oran 1 under the PRFU grant with
reference 48/S.D.R.F/2023, while that of AZ is partly supported by the EPSRC grant EP/V049038/1.

\section*{Data availability statement}
The test problems used for the experiments in this paper can be found in \cite{Mitsos2006,WiesemannTsoukalasKleniatiRustem2013}. As for the codes used for
the experiments, they are based on MATLAB’s fsolve and can be requested by an email to the authors.

\section*{Conflict of interest statement}
The authors have no conflicts of interest to declare.

\bibliographystyle{informs2014}

\begin{thebibliography}{99}
\bibitem{Ar} Arag\'{o}n Artacho FJ,  Mordukhovich BS (2011) Enhanced metric regularity and Lipschitzian
 properties of variational systems. \emph{J. Global Optim.} 50(1):145-167.
	
\bibitem{Au} Aussel D, Svensson A   (2019) Is Pessimistic bilevel programming a
	special case of a mathematical program with complementarity constraints?
	\emph{J. Optim. Theory Appl.} 181:504-520.
	
\bibitem{KlatteEtAlBook1982} Bank B, Guddat J,  Klatte D, Kummer B, Tammer K (1982)
\emph{Non-linear Parametric Optimization} (Akademie--Verlag, Berlin).

\bibitem{BNZ}Benchouk I, Jolaoso, LO, Nachi K, Zemkoho AB (2025) Scholtes relaxation method
for pessimistic bilevel optimization. \textit{Set-Valued Var. Anal.} 33:10  \href{https://doi.org/10.1007/s11228-025-00747-5}{https://doi.org/10.1007/s11228-025-00747-5}
	
\bibitem{CervinkaMatonaha} \v{C}ervinka  M, Matonoha C, Outrata JV (2013)  On the computation of
	relaxed pessimistic solutions to MPECs. \emph{Optim. Methods Softw.} 28(1):186-206.
 
 \bibitem{Clarke1990} Clarke FH (1990) Optimization and nonsmooth analysis. \emph{{S}ociety for {I}ndustrial and {A}pplied {M}athematics}.
	
\bibitem{Daskalakis} Daskalakis C, Skoulakis S, Zampetakis M (2021) The complexity of constrained min-max optimization. In Proceedings of the 53rd Annual ACM SIGACT Symposium on Theory of Computing, 1466-1478.

\bibitem{DeLuca} De Luca T, Facchinei F, Kanzow C (1996) A semismooth equation approach to the solution of nonlinear complementarity problems. \emph{Math. Prog.} 75(3):407-439.
	
\bibitem{DempeBook} Dempe S (2002) \emph{Foundations of Bilevel Programming} (Kluwer Academic Publishers, London).
	
\bibitem{DempeDutta2012} Dempe S, Dutta J (2012)  Is bilevel programming a special case of a mathematical program with complementarity constraints? \emph{Math. Program.} 131(1):37-48.

\bibitem{DempeF} Dempe S, Luo G, Franke S (2016) \emph{Pessimistic bilevel linear optimization}. \textit{Journal of Nepal Mathematical Society} 1(1):1-10.
	
\bibitem{D2} Dempe S, Mordukhovich BS, Zemkoho AB (2014) Necessary optimality conditions in pessimistic bilevel programming. \emph{Optimization}  63(4):505-533.
	
\bibitem{D3} Dempe S, Mordukhovich BS, Zemkoho AB (2012) Sensitivity
	analysis for two-level value functions with applications to bilevel
	programming. \emph{SIAM J. Optim.} 22(4):1309-1343.
	
\bibitem{DMZns2019} Dempe S, Mordukhovich BS, Zemkoho AB (2019) Two-level
	value function approach to non-smooth optimistic and pessimistic bilevel programs. \emph{Optimization} 68(2-3):433--455.
	
\bibitem{DempeZemkohoKKTRefNonsmooth}  Dempe S, Zemkoho AB (2014) KKT reformulation and necessary conditions for optimality in nonsmooth bilevel optimization. \emph{SIAM J. Optim.} 24(4):1639-1669.

\bibitem{DempeZemkohoKKTRef}  Dempe S, Zemkoho AB (2012) On the Karush–Kuhn–Tucker reformulation of the bilevel optimization problem. \emph{Nonlinear Anal.} 75(3):1202-1218.
	
\bibitem{DempeZemkohoBook}Dempe S, Zemkoho AB,  eds. (2020)  \emph{Bilevel Optimization: Advances and Next Challenges} (Springer, Cham).

\bibitem{Dolan&More} Dolan ED, Mor\'{e} JJ (2002) Benchmarking optimization software with performance profiles. \emph{Math. Program.} 91(2):201–213.
    
\bibitem{Du} Durea M, Strugariu R  (2012) Openness stability and implicit
	multifunction theorems, applications to variational systems. \emph{SIAM J. Optim.}
	22:899-913.
	
\bibitem{FischerSpecial1992} Fischer A (1992) A special newton-type optimization
	method. \emph{Optimization} 24(3-4):269-284.
	
	\bibitem {FischerZemkohoZhou}  Fischer A, Zemkoho AB, Zhou S (2021)
	Semismooth Newton-type method for bilevel optimization: Global convergence and
	extensive numerical experiments.  \emph{Optim. Methods Softw.} 37(5):1770-1804.
	
	\bibitem {FliegeTinZemkoho}  Fliege J, Tin  A, Zemkoho AB (2021)
	Gauss Newton-type methods for bilevel optimization. \emph{Comput. Optim. Appl.}  78:793–824.
	
	\bibitem {Hoheisel2010}Hoheisel T, Kanzow C. and Schwartz A (2012) Convergence of a local regulariization approach for mathematical programs with complementarity or vanishing constraints. \textit{Optim. Methods
Softw.}  27(3):483-512.
	

\bibitem{HoheiselEtAlComparison2013} Hoheisel T, Kanzow C, Schwartz A (2013)
	Theoretical and numerical comparison of relaxation methods for mathematical
	programs with complementarity constraints. \emph{Math. Program. Ser. A} 137:257-288.
 
\bibitem {KS1} Kadrani A, Dussault JP and Benchakroun A (2009) A new
regularization scheme for mathematical programs with complementarity
constraints. \emph{SIAM J. Optim.} 20:78--103.


\bibitem{KS} Kanzow C, Schwartz, A. (2010) A new regularization method for
mathematical programs with complementarity constraints with strong
convergence properties. \textit{SIAM J. Optim.} 23(2):770-798.

\bibitem{LamparielloEtal2019} Lampariello L, Sagratella S, Stein  O (2019)  The standard pessimistic bilevel problem.
\emph{SIAM J. Optim.} 29(2):1634-1656.
	
	\bibitem {LF} Lin GH, Fukushima M (2005) A modified relaxation scheme for
	mathematical programs with complementarity contraints. \emph{Ann. Oper. Res.} 133:63-84.
	
	\bibitem{Mitsos2006} Mitsos A, Barton PI (2006) \textit{A test set for bilevel programs}. MIT  Technical Report,\\
    available at 
    \href{https://www.avt.rwth-aachen.de/global/show_document.asp?id=aaaaaaaaafkdues}{https://www.avt.rwth-aachen.de/global/show$\_$document.asp?id=aaaaaaaaafkdues}
	
	\bibitem{LoridanMorgan1989}
Loridan  P, Morgan J (1989)  $\varepsilon$--regularized two-level optimization problems: approximation and existence results. Dolecki S, ed.\emph{ Optimization. Lecture Notes in Mathematics} (Springer, Berlin, Heidelberg) 99-113.
	
	\bibitem{LoridanMorgan1996} Loridan  P, Morgan J (1996)  Weak via strong Stackelberg problem: new results. \emph{J. Global Optim.} 8(3):263-287.

	
\bibitem {Mork}Mordukhovich BS (1993) Complete characterization of openness,
	meric regularity, and Lipschitzian properties of multifunctions. \emph{Trans. Amer. Math. Soc.} 340(1):1-35.

	\bibitem {Mor} Mordukhovich BS (1980) Metric approximations and necessary
	optimality conditions for general classes of extremal problems. \emph{Dokl. Akad. Nauk SSSR} 254(5):1072–1076.
		
	\bibitem {BS2}Mordukhovich BS (2006) \emph{Variational Analysis and Generalized
	 Differentiation. I: Basic Theory. II: Applications} (Springer, Berlin).
	
	\bibitem{MordukhovichOutrata2007} Mordukhovich BS, Outrata JV (2007)   Coderivative analysis of quasi-variational inequalities with applications to stability and optimization. \emph{SIAM J. Optim.} 18(2):389--412.
	
	\bibitem {B1S} Mordukhovich BS, Nam NM (2005) Variational stability and
	marginal function via generalized differentiation. \emph{Math. Oper. Res.} 30:800-816.
	
	\bibitem {He} Mordukhovich BS, Nam  NM, Phan HM  (2012) Variational
	analysis of marginal functions with applications to bilevel programming. \emph{J. Optim. Theory  Appl.} 152:557-586.

\bibitem {BS}Mordukhovich BS, Nam MN, Yen ND (2009) Subgradients of
	marginal functions in parametrics mathematical programming. \emph{Math. Program.} 116:369-396.
		
	\bibitem{OutrataBook} Outrata J, Kocvara M, Zowe J (2013)   \emph{Nonsmooth Approach to Optimization Problems with Equilibrium Constraints: Theory, Applications and Numerical Results}  (Kluwer Academic Publishers, Dordrecht).
	
	\bibitem {Qi} Qi L, Wei Z (2000) On the constraint positive linear dependence
	condition and its application to SQP methods. \emph{SIAM J. Optim.} 10(4):963-981.
	
	\bibitem{RTW} Rockafellar  RT, Wets RJ (1998)  \emph{Variational Analysis} (Springer-Verlag, Berlin, Heidelberg).
	
	\bibitem {Scholtes2001} Scholtes S (2001) Convergence properties of a regularisation scheme for
	mathematical programs with complementarity constraints. \emph{SIAM J. Optim.} 11:918-936.
	
	\bibitem {SU}Steffensen S, Ulbrich M (2010) A New Relaxation Scheme
	for Mathematical Programs with Equilibrium Constraints. \emph{SIAM J. Optim.} 20(5):2504--2539. 
	
	
	
\bibitem {TinZemkohoLevenberg} Tin A, Zemkoho  AB (2021)
	Levenberg-Marquardt method and partial exact penalty parameter selection in
	bilevel optimization. \emph{Optim. Eng.} 24:1343-1385.
	
\bibitem{WiesemannTsoukalasKleniatiRustem2013} Wiesemann  W, Tsoukalas  A,  Kleniati PM,  Rustem B (2013)   Pessimistic bilevel optimization. \emph{SIAM J. Optim.} 23(1):353-380.


	
\bibitem {Ye} Ye JJ (2000) Constraint qualifications and necessary optimality
	conditions for optimization problems with variational inequality constraints,
	\emph{SIAM J. Optim.} 10(4):943-962.

\bibitem{YeZhuZhu}  Ye JJ, Zhu DL,  Zhu QJ (1997) Exact penalization and necessary optimality conditions for generalized bilevel programming problems. \emph{SIAM J. optim.} 7(2):481-507.

\bibitem {Zemkoho} Zemkoho AB (2014) A simple approach to optimality conditions in
	minmax programming. \emph{Optimization} 63(3):385-401.
	
\bibitem{ZemkohoSetValPaper1} Zemkoho  AB (2016) Solving ill-posed bilevel programs. \emph{Set-Valued Var. Anal.}  24(3):423-48.
	
\bibitem {ZemkohoZhouTheoretical} Zemkoho AB, Zhou S (2021)  Theoretical and numerical comparison of the Karush-Kuhn-Tucker and
	value function reformulations in bilevel optimization. \emph{Comput. Optim. Appl.}  78(2):625-674.
	
	\bibitem{Zeng} Zeng B (2020) A practical scheme to compute the pessimistic bilevel optimization problem. \emph{INFORMS J. Comput.} 32(4):1128-1142.
	
\bibitem {ZhouZemkohoTin2018} Zhou S, Zemkoho  AB, Tin A  (2020)  BOLIB:
	Bilevel Optimization LIBrary of test problems. Dempe S, Zemkoho AB, eds. \emph{Bilevel Optimization: Advances and Next Challenges}
(Springer, Cham),  563-580. 
\end{thebibliography}

\appendix

\section{Proof of Proposition \ref{lem}}\label{A}
(a) is automatically satisfied given that the functions $f$ \eqref{S(x)} and $g$ \eqref{Y(x)} describing the lower-level problem are continuously differentiable, while the functions $\varphi^t_{i, SU}$ and $\varphi^t_{i, KS}$ are continuous, for $i=1, \ldots, q$. 
Observe also that assertion (d) derives from assertion (c) since for any $t>0$, it holds that $\mathcal{D}(x) \subset \mathcal{D}^{t}_{\mathcal{R}}(x)$. 
Subsequently, to prove assertion (e), it suffices to show that $\underset{k\rightarrow\infty}{\lim\sup}
\mathcal{D}_{\mathcal{R}}^{t_{k}}(x)\subset\mathcal{D}(x)$ for any $(t_{k})\downarrow0$ and all $\mathcal{R}\in \left\{\mbox{S}, \mbox{LF}, \mbox{SU}, \mbox{KS}\right\}$. For remaining properties, see \cite[Proposition 2.1]{BNZ} for $\mathcal{R}=\mbox{S}$ and see the proofs of the cases $\mathcal{R}\in \left\{\mbox{LF}, \mbox{SU}, \mbox{KS}\right\}$ below, respectively.   {Let us prove  assertions (c) (resp. (e))  in the context of $\mathcal{R}=\mbox{KDB}$. Observe  that  inclusion $\underset{t>0}{\cap}\mathcal{D}_{KDB}^{t}(x)\subset\mathcal{D}(x)$ is obvious. Given $(t_{k})\downarrow0$ and
$(y,u)\in\mathcal{D}(x)$ with $(u_{i},g_{i}(x,y))\neq(0,0)$ for all $i=1, \ldots, q$, we have $u_{i}+t_{k}\geq0$ and $g_{i}(x,y)-t_{k}\leq0$ for $k\in \mathbb{N}$. Furthermore, since $(t_{k})\downarrow0$ and $u_{i}-g_{i}(x,y)>0$ for $i=1, \ldots, q$,  there exists
$k_{0}\in\mathbb{N}$ such that 
\[
t_{k}\in\left]0,\;\;\, 
\underset{1\leq i\leq q}{\min}\left\{u_{i}-g_{i}(x,y)\right\}\right] \;\, \mbox{ for all } \;\, k\geq k_{0}.
\]
Hence, for all $i=1, \ldots, q$ and  $k\geq k_{0}$, 
$
(u_{i}-t_{k})(g_{i}
(x,y)+t_{k})=-t_{k}^{2}+(u_{i}-g_{i}(x,y))t_{k}\geq0
$ which leads to 
$(y,u)\in\underset{k\geq k_{0}}{\cap}\mathcal{D}_{KDB}^{t_{k}}(x).$  Now for assertion (e), take $(y,u) \in \underset{(t,x')\rightarrow (0^+,x)}{\lim\sup}
\mathcal{D}_{KDB}^{t}(x')$ so there exists sequences $(t_{k})\downarrow0$ and $(x^{k})
\rightarrow x$ along with a sequence of points $(y^{k},u^{k}
)\in\mathcal{D}_{KDB}^{t_{k}} $  which converges (up to a subsequence)
to $(y,u)$. Now as for any $k$ 
$\mathcal{L}(x^{k},y^{k},u^{k}) =0$ and 
\begin{align*}
-u_{i}^{k}-t_{k}  \leq0, \;\; 
g_{i}(x^{k},y^{k})-t_{k}   \leq0, \;\;
-(u_{i}^{k}-t_{k})(g_{i}(x^{k},y^{k})+t_{k})  & \leq0,
\end{align*}
 taking the limit as $k\rightarrow\infty$, we get $(y,u)\in\mathcal{D} (x)$
since all functions are continuous.}\\[3ex]
\textbf{\underline{Case $\mathcal{R}$=LF}.} Start by recalling that the relaxation $\mathcal{R}$=LF consists of  replacing the complementarity conditions in $\mathcal{D}(x)$, for a given $x\in X$, by 
\begin{equation}\label{LFc}
    \begin{array}
	[c]{l}
	\;\; u_{i}g_{i}(x,y)+t^{2}\geq0,\text{ } \;\;
	(u_{i}+t)(-g_{i}(x,y)+t)-t^{2}\geq 0 , \;\; i=1,\ldots,q.
	\end{array}
\end{equation}
Hence, by taking $t=0$, we have that $\mathcal{D}(x)\subset \mathcal{D}_{LF}^{0}(x) $ for any $x$.
    	 For item (c), take $(y,u)\in \underset{t>0}{\cap}\mathcal{D}_{LF}^{t}(x)\ $ so that for all $t>0$, \eqref{LFc} is satisfied. Thus, for $i=1, \ldots, q$,  $u_{i}g_{i}(x,y)=0$ and $u_{i}\geq g_{i}(x,y)$. This leads to $(y,u)\in \mathcal{D}(x)$.
	To prove the Painlev\'{e}--Kuratowski convergence (assertion (e)), it suffices to show that $\underset{t\downarrow0}{\lim\sup}\,\mathcal{D}^{t}_{LF}(x) \subset \mathcal{D}(x)$. Let $(y,u)\in$ $\underset{t\downarrow0}{\lim\sup}\,\mathcal{D}^{t}_{LF}(x)$, it follows that
	for a sequence $(t_{k})\downarrow0$, the point $(y,u)$ is a cluster point of a sequence
	in $\mathcal{D}^{t_{k}}_{LF}(x)$; i.e., there exists a sequence $(y^{k}
	,u^{k})_{k}$ with $(y^{k},u^{k})\in\mathcal{D}^{t_{k}}_{LF}(x)$, which converges
	to $(y,u)$. Thus, for any $k$, we have $\mathcal{L}(x,y^{k},u^{k})=0$ and
	\[
	\begin{array}
	[c]{l}
	\;\; u_{i}^{k}g_{i}(x,y^{k})+t_{k}^{2}\geq0,\text{ } \;\;
	(u_{i}^{k}+t_{k})(-g_{i}(x,y^{k})+t_{k})-t^{2}_{k}\geq 0 , \;\; i=1,\ldots,q.
	\end{array}
	\]
	Thus, 
	$-(u_{i}^{k}-g_{i}(x,y^{k}))t_{k}\leq u_{i}^{k}(-g_{i}(x,y^{k}))\leq t_{k}^{2}$ and $t_{k}+(u_{i}^{k}-g_{i}(x,y^{k})\geq 0$, $i=1,\ldots,q$.
	Taking the limit as $k\rightarrow{+\infty}$, we get $\mathcal{L}(x,y,u)=0$ and
    $u_{i}(-g_{i}(x,y))=0$, $u_{i}-g_{i}(x,y)\geq 0$, $i=1,\ldots,q$. Hence, $(y,u)\in\mathcal{D}(x)$, and therefore,  $\underset
	{t\rightarrow0^{+}}{\lim\sup}\,\mathcal{D}^{t}_{LF}(x)\subset\,\mathcal{D}(x).$ 

 \medskip
\noindent\textbf{\underline{Case $\mathcal{R}$=SU}.} In this case, the properties of the regularization function $\theta(\cdot)$ and the corresponding relaxation function $\varphi_{i,SU}^{t}$ for $i=1, \ldots, q$ as given in \cite{SU} 
are used.  Let us start by proving that $\mathcal{D}_{SU}^{t_{1}}(x)\subset\mathcal{D}_{SU}^{t_{2}}(x)$ for
		$0<t_{1}<t_{2}$. It suffices to show that 
$\varphi_{i,SU}^{t_2}(x,y,u)\leq0 $ whenever $\varphi_{i,SU}^{t_1}(x,y,u)\leq0 $  for $i=1,\ldots,q$. Let $i=1,\ldots,q$, then for  $\left\vert
u_{i}+g_{i}(x,y)\right\vert \geq t_{2}$, it holds that
$\varphi_{i,SU}^{t_2}(x,y,u) = \varphi_{i,SU}^{t_1}(x,y,u).$ So in this case, if $(y,u)\in \mathcal{D}_{SU}^{t_{1}}(x)$, then $(y,u) \in \mathcal{D}_{SU}^{t_{2}}(x)$. Now, let $i=1,\ldots,q$, it holds that for  $ t_{1} \leq  \left\vert
u_{i}+g_{i}(x,y)\right\vert <t_{2}$, we have from Lemma 3.1 in \cite{SU}  (see too \cite[Lemma 4.4 (a)]{Hoheisel2010}) that
\[
\varphi_{i,SU}^{t_2}(x,y,u)= u_{i}-g_{i}(x,y)-t_{2}\theta\left(\dfrac{u_{i}+g_{i}(x,y)}{t_{2}}\right) <u_{i}-g_{i}(x,y)-\left\vert
u_{i}+g_{i}(x,y)\right\vert =\varphi_{i,SU}^{t_1}(x,y,u).
\]
Also in this case $(y,u)\in\mathcal{D}_{SU}^{t_{1}}(x)$  implies $(y,u) \in \mathcal{D}_{SU}^{t_{1}}(x)$. 
Finally, for $i=1, \ldots, q$ such that $\left\vert
 u_{i}+g_{i}(x,y)\right\vert <t_{1}$, it follows gain from Lemma 3.1 in \cite{SU} that the function $t\rightarrow\varphi_{i,SU}^{t}(x,y,u)$ is strictly
    monotonically decreasing on  $ [t_1,+\infty[$. Hence, we have $ \varphi_{i,SU}^{t_2}(x,y,u) \leq \varphi_{i,SU}^{t_1}(x,y,u)$ and assertion (b) follows. 
For assertion (c), let $(y,u) \in \underset{t>0}{\cap}\mathcal{D}_{SU}^{t}(x)$. For any $t>0$, 
\[
 \mathcal{L}(x,y,u)=0, \;\; u_{i}\geq0,\text{ \ }-g_{i}(x,y)\geq0,\text{ \ } 
\varphi_{i,SU}^{t}(x,y,u) \leq0\text{ \ } \forall i=1,\ldots,q.
\]
Taking the limit as $t$ goes to $0^+$ and considering the fact that 
\[
\underset{t\rightarrow0^+}{\lim}\varphi_{i,SU}^{t}(x,y,u)=u_{i}-g_{i}(x,y)-\left\vert u_{i}+g_{i}(x,y)\right\vert
\]
(see property (c) of Lemma 4.4 in \cite{Hoheisel2010}),  
we obtain
$(y,u) \in \mathcal{D}(x)$. Let us now prove assertion (e) in which we only need to show that $\underset{t\rightarrow0^+}{\limsup} \, \mathcal{D}_{SU}^{t}(x)\subset \mathcal{D}(x).$
Let $(y,u)\in$ $\underset{t\downarrow0}{\lim\sup}\,\mathcal{D}^{t}_{SU}(x)$, that is 
	for a sequence $(t_{k})\downarrow0$, there exists a sequence $(y^{k}
	,u^{k})_{k}$ with $(y^{k},u^{k})\in\mathcal{D}^{t_{k}}_{SU}(x)$ which converges
	to $(y,u)$. Thus, for any $k$,  $\mathcal{L}(x,y^{k},u^{k})=0$ as well as 
	$u_{i}^{k}\geq 0$, $g_{i}(x,y^{k})\leq0$ and $\varphi_{i,SU}^{t_k}(x,y^{k},u^{k})\leq 0$ for all $i=1,\ldots,q$.
		Taking the limit of this system $k\rightarrow +\infty$, we get $\mathcal{L}(x,y,u)=0$ with  $u_{i}\geq 0$, $g_{i}(x,y) \leq 0$ and also by considering Lemma 4.4 (c) in \cite{Hoheisel2010}, we have 
\[
 u_{i}-g_{i}(x,y)-\left\vert u_{i}+g_{i}(x,y)\right\vert  \; \leq 0 \;\,\mbox{ for all } \;\,i=1,\ldots,q,
 \]
  which leads to $(y,u)\in\mathcal{D}(x)$. Hence,  $\underset
	{t\rightarrow0^{+}}{\lim\sup}\,\mathcal{D}^{t}_{SU}(x)\subset\,\mathcal{D}(x).$
 \medskip
${}$\\
\noindent\textbf{\underline{Case $\mathcal{R}$=KS}.} Let us show the second assertion. Take  $t_{2}\geq t_{1} \geq 0$ and
$(y,u)$ an arbitrary element in $ \mathcal{D}_{KS}^{t_{1}}(x)$. Thus by taking into account the fact that for  any $i$, the function $\mathbb{R}^{+}\ni
t\longmapsto\phi^{t}_{i,KS}(x,y,u) $ decreases, 
we have
$$ \phi^{t_2}_{i,KS}(x,y,u) \leq \phi^{t_1}_{i,KS}(x,y,u)\leq 0$$
since $(y,u)\in \mathcal{D}_{KS}^{t_{1}}(x)$.
For assertion (c), it suffices to show that for any $(y,u)\in \underset{t>0}{\cap}\mathcal{D}_{KS}^{t}(x)$, it holds for all $i=1,\ldots,q$, $u_{i}g_{i}(x,y)=0$ 
or equivalently $\varphi(u_{i},-g_{i}(x,y))\leq 0$  where $\varphi$ is the NCP-function used in \cite{KS} (namely, $\varphi(a,b)=ab$ if $a+b\geq0$, $\varphi(a,b)=-\dfrac{1}{2}(a^{2}+b^{2})$ if $a+b<0$). 
Assume that there is an  $i=1,\ldots,q$ such that $\varphi(u_{i},-g_{i}(x,y))> 0$. Considering Lemma 3.1(c) in \cite{KS},
\[
\left\{
\begin{array}[c]{l}
\varphi(a,b)>0\text{\ \ \ if \ \ }a>0\text{ and }b>0,\\
\varphi(a,b)<0\text{\ \ \ if \ \ }a<0\text{ or }b<0.
\end{array}
\right.
\]
Therefore, $\varphi(u_{i},-g_{i}(x,y))> 0$ if $u_{i}>0$ and $g_{i}(x,y)<0$. So for $0<t^{*}< \min(u_{i},-g_{i}(x,y))$, we have $ u_{i}-g_{i}(x,y)\geq 2t^{*}$ and hence, 
\[
\phi^{t^{*}}_{i,KS}(x,y,u) =(u_{i}-t^{*})(-g_{i}(x,y)-t^{*})\geq 0.
\]
Consequently,  $(y,u)\notin \mathcal{D}_{KS}^{t^{*}}(x)$ which a contradiction with the assumption that $(y,u)\in \underset{t>0}{\cap}\mathcal{D}_{KS}^{t}(x).$
For assertion (e), take a sequence $(t_{k})\downarrow0$ and $(y,u)$ the  limit of a sequence $(y^{k},u^{k})_{k}$
such that $(y^{k},u^{k})\in\mathcal{D}_{KS}^{t_{k}
}(x)$.
Thus, for all $k$ and $i=1,\ldots,q$, 
 \[
 \mathcal{L}(x,y^{k},u^{k})=0,\,\,
u_{i}^{k}\geq0,\,\, g_{i}(x,y^{k})\leq0 
\] with $(u_{i}^{k}-t_{k})(-g_{i}(x,y^{k})-t_{k})\leq0$ for all $i$ such that $u_{i}^{k}-g_{i}(x,y^{k})\geq2t_{k}$, given that when $u_{i}^{k}-g_{i}(x,y^{k})<2t_{k}$, 
\[
\phi^{t_{k}}_{i,KS}(x,y^{k},u^{k}
)=-\dfrac{1}{2}((u_{i}^{k}-t_{k})^{2}+(-g_{i}(x,y^{k})-t_{k})^{2})\leq0.
\]
Taking now the limit as $k\rightarrow +\infty$, we get by continuity that $\mathcal{L}(x,y,u)=0$, $u_{i}\geq0$, $g_{i}(x,y)\leq0$, $u_{i}
g_{i}(x,y)\geq0$ for all $i=1,\ldots,q$, meaning that $(y,u)\in\mathcal{D}
(x)$  and assertion (e) follows.\\

\section{Proof of Theorem \ref{ConvergenceResult}}\label{B}
Since $x^{k}$ is a stationary point of  \eqref{KKT-RG} for $t:=t^k$, there exist a vector
	$(y^{k},u^{k})\in\mathcal{S}_\mathcal{R}^{t_{k}}(x^{k})$ and multipliers $(\alpha
	^{k},\beta^{k},\lambda^{k})$ such that the relationships (\ref{Er1S*})-(\ref{Er5S*}) are satisfied. On the other hand, from (\ref{e1}), there exists $(z^{k},w^{k})\in
	\mathcal{S}_{p}(\bar{x})$ such that
\begin{equation}\label{Tr1-1}
    \underset{k \rightarrow
		\infty}{\lim }\left\Vert y^{k}-z^{k}\right\Vert = 0 \;\, \text{ and } \;\,\underset{k \rightarrow
		\infty}{\lim }\left\Vert
	u^{k}-w^{k}\right\Vert = 0
\end{equation}
	and since $\mathcal{S}_{p}(\bar{x})$ is compact, the sequence $(y^{k}
	,u^{k})_{k}$ (up to a subsequence) converges to some  point $(\bar{y},\bar
	{u})\in\mathcal{S}_{p}(\bar{x})$. For the rest of the proof, see the cases below for $\mathcal{R}\in \{\mbox{LF}, \mbox{SU}, \mbox{KDB}, \mbox{KS}\}$ and \cite{BNZ} for $\mathcal{R}=\mbox{S}$.\\[3ex]
\noindent \textbf{\underline{Case $ \mathcal{R}$= LF}:} To proceed, consider the following index sets defined for $(x,(y,u))\in X\times\mathcal{D}_{LF}^{t}(x)$ with $t>0$: 
\[
\begin{array}{rll}
I_{G}(x) & := & \left\{ i\in \{1,\ldots,p\}:\;\;G_{i}(x)=0\right\} , \\
I^{+}(x,y,u,t) & := & \left\{ i\in
\{1,\ldots,q\}:\;\;u_{i}g_{i}(x,y)+t^{2}=0\right\} , \\
I^{-}(x,y,u,t) & := & \left\{ i\in
\{1,\ldots,q\}:\;\;(u_{i}+t)(-g_{i}(x,y)+t)-t^{2}=0\right\}. \\
&  &
\end{array}
\]
Clearly,
$I^{+}(x,y,u,t)\subset \left\{ i\in
\{1,\ldots,q\}:\,u_{i}>0,\,\,g_{i}(x,y)<0\right\} $
and
$$\theta(x,y,u) \subset I^{-}(x,y,u,t)\subset
\left\{ i\in \{1,\ldots,q\}:\,u_{i}+t>0,\,\,-g_{i}(x,y)+t>0\right\}. $$
	We have for some $(y^{k},u^{k})\in\mathcal{S}_{LF}^{t_{k}}(x^{k})$ and multipliers $(\alpha
	^{k},\beta^{k},\mu^{k},\gamma^{k},\delta^{k})$, the relationships
	\begin{equation}\label{S1}
	\left\{
	\begin{array}{l}
	\nabla_{x}F(x^{k},y^{k})+\nabla G(x^{k})^{\top}\alpha^{k}-\nabla_{x}\mathcal{L}(x^{k},y^{k},u^{k})^{\top}\beta^{k}\\
    \qquad \qquad \qquad +\sum\limits_{i=1}^{q}\left(\gamma_{i}^{k} u_{i}^{k}-(u_{i}^{k}+t_{k})\delta_{i}^{k}\right)  \nabla_{x}g_{i}(x^{k},y^{k})=0, \\
	\nabla_{y}F(x^{k},y^{k})-\nabla_{y}\mathcal{L}(x^{k},y^{k},u^{k})^{\top}
	\beta^{k}+\sum\limits_{i=1}^{q}\left( \gamma_{i}^{k} u_{i}^{k}-(u_{i}^{k}+t_{k})\delta_{i}^{k}\right)  \nabla_{y}g_{i}(x^{k},y^{k})=0,\\
	\forall i=1, \ldots, q:\;\; -\nabla _{y}g_{i}(x^{k},y^{k})\beta ^{k}+\gamma
	_{i}^{k}g_{i}(x^{k},y^{k})+\delta _{i}^{k}(-g_{i}(x^{k},y^{k})+t_{k})=0, \\[1ex]
	\end{array}
	\right.
	\end{equation}
	together with the following conditions:
	\begin{equation}\label{S2}
	\left\{\begin{array}{l}
	\forall i=1, \ldots, p:\;\;\alpha_{i}^{k}\geq0,\;\; G_{i}(x^{k})\leq0,\;\; \alpha_{i}^{k}G_{i}(x^{k})=0, \\[1ex]
	\forall i=1, \ldots, q:\;\;\gamma _{i}^{k}\geq 0,\,\,\, \delta _{i}^{k}\geq
	0,\;\;\\[1ex]
	\forall i=1, \ldots, q:\,\, \gamma
	_{i}^{k}(u_{i}^{k}g_{i}(x^{k},y^{k})+t_{k}^{2})=0, \;\; \delta
	_{i}^{k}((u_{i}^{k}+t_{k})(-g_{i}(x^{k},y^{k})+t_{k})-t_{k}^{2})=0.
	\end{array}
	\right.
	\end{equation}
	Hence, it follows that for all $k$,
	\begin{equation}
	\left\{
	\begin{array}{lll}
	\nabla_{y}g_{i}(x^{k},y^{k})\beta^{k}=\gamma_{i}^{k}g_{i}(x^{k},y^{k}) & \mbox{ for } & i\in\mathrm{supp}\gamma^{k},\\[1ex]
	\nabla_{y}g_{i}(x^{k},y^{k})\beta^{k}=\delta_{i}^{k}(-g_{i}(x^{k},y^{k})+t_{k}) & \mbox{ for } & i\in\mathrm{supp}\delta^{k},\\[1ex]
	\nabla_{y}g_{i}(x^{k},y^{k})\beta^{k}=0 & \mbox{ for } & i\notin (\mathrm{supp}\gamma^{k}\cup\mathrm{supp}\delta^{k}),
	\end{array}
	\right. \label{S2b}
	\end{equation}
	and
	\begin{align}
	\alpha_{i}^{k} &  \geq0,\text{ \ }\mathrm{supp}\alpha^{k}\subset I_{G}
	(x^{k}),\medskip\label{S3}\\
	\gamma_{i}^{k} &  \geq0,\text{ \ }\mathrm{supp}\gamma^{k}\subset I^{+}
	(x^{k},y^{k},u^{k},t_{k}),\medskip\label{S5}\\
	\delta_{i}^{k} &  \geq0,\text{ \ }\mathrm{supp}\delta^{k}\subset I^{-}
	(x^{k},y^{k},u^{k},t_{k}).\label{S6}
	\end{align}
		Thus, $\mathrm{supp}\gamma^{k}\cap\mathrm{supp}\delta^{k}=\emptyset $ with $\mathrm{supp}(z) := \left\lbrace i |\,\, z_i \neq 0\right\rbrace $ being  the support of a vector $z \in \mathbb{R}^n.$ Next, we can then define the following new Lagrange multipliers
	\begin{equation}\label{S8}
	\tilde{\gamma}_{i}^{k}:=\left\{
	\begin{array}{lll}
	\gamma_{i}^{k}u_{i}^{k} & \mbox{ if } \,\,i\in\mathrm{supp}\gamma^{k}\backslash\eta,&\\
	-(u_{i}^{k}+t_{k})\delta_{i}^{k} & \mbox{ if }\,\, i\in\mathrm{supp}\delta^{k}\backslash\eta,&\\
	0& \mbox{ else. }&
	\end{array}
	\right.
	\end{equation}
		Hence, by setting $\tilde{\beta}^{k}:=-\beta^{k}$, the system of optimality conditions in (\ref{S1}) becomes
	\begin{equation}
	\left\{
	\begin{array}{l}
	\nabla_{x}F(x^{k},y^{k})+\nabla G(x^{k})^{T}\alpha^{k}+\nabla_{x}\mathcal{L}(x^{k},y^{k},u^{k})^{T}\tilde{\beta}^{k}\\[0.5ex]
	\qquad \qquad \qquad \qquad +\, \sum\limits_{i=1}^{q}\tilde{\gamma}_{i}
	^{k}\nabla_{x}g_{i}(x^{k},y^{k}) + \sum\limits_{i\in\eta
	}\left( \gamma_{i}^{k} u_{i}^{k}-(u_{i}^{k}+t_{k})\delta_{i}^{k}\right)   \nabla_{x}g_{i}(x^{k},y^{k})=0,\\[1ex]
	\nabla_{y}F(x^{k},y^{k})+\nabla_{y}\mathcal{L}(x^{k},y^{k},u^{k})^{T}\tilde{\beta}^{k}\\
        \qquad \qquad \qquad \qquad +\,\sum\limits_{i=1}^{q}\tilde{\gamma}_{i}^{k}\nabla_{y}g_{i}(x^{k},y^{k})+\sum\limits_{i\in\eta}\left( \gamma_{i}^{k} u_{i}^{k}-(u_{i}^{k}+t_{k})\delta_{i}^{k}\right)  \nabla_{x}g_{i}(x^{k},y^{k}) =0,
	\end{array}
	\right. \label{S10}
	\end{equation}
	with
	\begin{equation}
	\nabla_{y}g_{i}(x^{k},y^{k})\tilde{\beta}^{k}=\left\{
	\begin{array}{lll}
	-\gamma_{i}^{k}g_{i}(x^{k},y^{k})& \mbox{ if }\,\, i\in\mathrm{supp}\gamma^{k}\backslash\upsilon,&\\
	-\delta _{i}^{k}(-g_{i}(x^{k},y^{k})+t_{k})& \mbox{ if }\,\, i\in\mathrm{supp}\delta^{k}\backslash\upsilon,& \\
	-\gamma
	_{i}^{k}g_{i}(x^{k},y^{k})-\delta _{i}^{k}(-g_{i}(x^{k},y^{k})+t_{k})  & \mbox{ if }\,\,  i\in \upsilon,&  \\
	0 & \mbox{ else. } &
	\end{array}
	\right. \label{S11}
	\end{equation}
	 Next, we are going to show that the sequence
	$(\chi_{k}):=
	\begin{pmatrix}
	\alpha^{k},\tilde{\beta}^{k},\tilde{\gamma}^{k},\gamma
	_{\eta\cup\upsilon}^{k},\delta
	_{\eta\cup\upsilon}^{k}
	\end{pmatrix}
	_{k}$ is bounded. Assume that this is not the case and consider the sequence
	$\left(\dfrac{\chi_{k}}{\left\Vert \chi_{k}\right\Vert }\right)_{k}$ that converges to a
	nonvanishing vector $\chi:=(\alpha,\tilde{\beta},\tilde{\gamma
	},\gamma_{\eta\cup\upsilon},\delta_{\eta\cup\upsilon})$ (otherwise, a suitable subsequence is chosen).
	Dividing (\ref{S10}) by $\left\Vert \chi_{k}\right\Vert $ and taking the limit
	$k\rightarrow\infty$ while taking into account the continuous differentiability of all
	involved functions and the fact that $\bar{u}_{i}=0$ for $i\in\eta$, we come up with
	\begin{equation}\label{CQ}%
	\nabla G(\bar{x})^{T}\alpha+\nabla_{x}\mathcal{L}(\bar{x},\bar{y},\bar{u})^{T}\tilde{\beta}+\nabla_{x}g^{T}(\bar{x},\bar{y})\tilde{\gamma}=0
	\mbox{ and }
	\nabla_{y}\mathcal{L}(\bar{x},\bar{y},\bar{u})^{T}\tilde{\beta}+\nabla_{y}g^{T}(\bar{x},\bar{y})\tilde{\gamma}=0.
	\end{equation}
		Let us show now that we have the following inclusions for any $k$ sufficiently large:
	\begin{equation}
	\mathrm{supp}\alpha\subset I_{G}(x^{k})\subset I_{G},\label{S12}%
	\end{equation}%
	\begin{equation}
	\mathrm{supp}\tilde{\gamma}\subset I^{+}(x^{k},y^{k},u^{k},t_{k})\cup I^{-}(x^{k},y^{k},u^{k},t_{k})\backslash\eta\subset\theta\cup\upsilon,\label{S13}%
	\end{equation}
	\begin{equation}
	\mathrm{supp}\tilde{\delta}\subset I^{+}(x^{k},y^{k},u^{k},t_{k})\cup I^{-}(x^{k},y^{k},u^{k},t_{k})\backslash\upsilon\subset\theta\cup \eta.\label{S14}%
	\end{equation}
	Indeed, for (\ref{S12}), let $i\in\mathrm{supp}\alpha$ such that $\alpha_{i}>0$.
	For any $k$ large enough, $\alpha_{i}^{k}>0$ that is, $i\in \mathrm{supp}\alpha^{k}\subset I_{G}(x^{k})$ by (\ref{S3}). Let now $i$ be such that for any $k\geq k_{0},$ $G_{i}(x^{k})=0$. Passing to
	the limit as $k\rightarrow\infty$, we get $G_{i}(\bar{x})=0$ and so $i\in I_{G}(\bar{x}).$ For (\ref{S13}): take $i\in
	\mathrm{supp}\tilde{\gamma}$ thus for any $k$ sufficiently large,  $i\in\mathrm{supp}\tilde{\gamma}^{k}.$ Then for all $k$
	sufficiently large, $i\in\mathrm{supp}\gamma^{k}\cup\mathrm{supp}\delta
	^{k}\backslash\eta.$ Hence, by (\ref{S5}) and (\ref{S6}), $i\in
	I^{+}(x^{k},y^{k},u^{k},t_{k})\cup I^{-}(x^{k},y^{k},u^{k},t_{k})\backslash\eta$.  Now, if
	$u_{i}^{k}g_{i}(x^{k},y^{k})+t_{k}=0$ (resp. $(u_{i}^{k}+t_{k})(-g_{i}(x^{k},y^{k})+t_{k})-t_{k}^{2})=0$) for $k$ large enough, with
	$i\notin\eta.$ Taking the limit, it follows that $\bar{u}_{i}g_{i}(\bar
	{x},\bar{y})=0$ so $i\in\theta\cup\upsilon.$ Similarly, we get (\ref{S14}). On the
	other hand, 
	\begin{align*}
    \alpha_{j} \geq0, \,G_{j}(\bar{x})\leq0, \,\alpha_{j}G_{j}(\bar{x}
	) & =0  \mbox{ for }\;\, j=1,\ldots,p,\\
	\delta_{\eta\cup\upsilon} &  =\lim\delta_{\eta\cup\upsilon}^{k}\geq0,\\
	\tilde{\gamma}_{i} &  =\lim\tilde{\gamma}_{i}^{k}=0\text{ \ \ }\forall i\in\eta,\\
	\nabla_{y}g_{\upsilon}(\bar{x},\bar{y})\tilde{\beta} & = -\lim\left( \gamma_{\upsilon}^{k}g_{\upsilon}(x^{k},y^{k})+\delta _{\upsilon}^{k}(-g_{\upsilon}(x^{k},y^{k})+t_{k})\right)
	=-\left( \gamma_{\upsilon}-\delta_{\upsilon}\right) g_{\upsilon}(\bar{x},\bar{y})=0.
	\end{align*}
	Let us now prove that
	$
	(\tilde{\gamma}_{i}<0\wedge\nabla_{y}g_{i}(\bar{x},\bar{y})\tilde{\beta
	}<0)\vee\tilde{\gamma}_{i}\nabla_{y}g_{i}(\bar{x},\bar{y})\tilde{\beta}=0
	$
	for all $i\in\theta.$ Assume that $\tilde{\gamma}_{i}>0$ or $\nabla_{y}
	g_{i}(\bar{x},\bar{y})\tilde{\beta}>0$ for some $i\in\theta$ with
	$\tilde{\gamma}_{i}\nabla_{y}g_{i}(\bar{x},\bar{y})\tilde{\beta}\neq0.$ If
	$\tilde{\gamma}_{i}>0$ then for all $k$ large enough, $\tilde{\gamma}_{i}
	^{k}>0$ and so from (\ref{S8}), $i\in\mathrm{supp}\gamma^{k}$. Moreover, as
	$\mathrm{supp}\gamma^{k}\cap$ \textrm{supp}$\delta^{k}=\emptyset,$
	$i\notin\mathrm{supp}\delta^{k}$ and from (\ref{S11}) and (\ref{S8}),
	\[
	\nabla_{y}g_{i}(x^{k},y^{k})\frac{\tilde{\beta}^{k}}{\left\Vert \chi
		_{k}\right\Vert }=-\frac{\delta_{i}^{k}}{\left\Vert \chi_{k}\right\Vert }
	g_{i}(x^{k},y^{k})
	\]
	and at the limit, we get $\nabla_{y}g_{i}(\bar{x},\bar{y})\tilde{\beta
	}=0$ as $g_{i}(x^{k},y^{k})$ converges to $g_{i}(\bar{x},\bar{y})=0$
	($i\in\theta$) and the sequence $\left(\dfrac{\delta_{i}^{k}}{\left\Vert \chi
		_{k}\right\Vert}\right)$ is bounded. This leads to a contradiction ($\tilde{\gamma}_{i}\nabla_{y}g_{i}(\bar{x},\bar{y})\tilde{\beta}\neq0$ by
	assumption). Similarly, if $\nabla_{y}g_{i}(\bar{x},\bar{y})\tilde{\beta}>0$ so
	for all $k$ large enough, $\nabla_{y}g_{i}(x^{k},y^{k})\tilde{\beta}^{k}>0$
	and from (\ref{S11}), $i\in\mathrm{supp}\gamma^{k}.$ Then $\dfrac
	{\tilde{\gamma}_{i}^{k}}{\left\Vert \chi_{k}\right\Vert }=\dfrac{\gamma
		_{i}^{k}}{\left\Vert \chi_{k}\right\Vert }u_{i}^{k}$ converges to
	$\tilde{\gamma}_{i}=0$ ($\bar{u}_{i}=0$) which leads to a contradiction too.
	Consequently,
	\[
	\left\{
	\begin{array}{l}
	\nabla_{y}\mathcal{L}(\bar{x},\bar{y},\bar{u})^{T}\tilde{\beta}+\nabla
	_{y}g^{T}(\bar{x},\bar{y})\tilde{\gamma}=0,\\
	\nabla_{y}g_{v}(\bar{x},\bar{y})\tilde{\beta}=0,\text{ \ }\tilde{\gamma}_{\eta}=0,\\
	(\tilde{\gamma}_{i}<0\wedge\nabla_{y}g_{i}(\bar{x},\bar{y})\tilde{\beta}<0)\vee\tilde{\gamma}_{i}\nabla_{y}g_{i}(\bar{x},\bar{y})\tilde{\beta
	}=0\; \mbox{ for }\; i\in\theta,
	\end{array}
	\right.
	\]
	so that $(-\tilde{\beta},-\tilde{\gamma})\in\Lambda_{y}^{em}(\bar{x},\bar
	{y},\bar{u},0)$ and from $(A_{2}^{m})$, we get
	$
	\nabla_{x}\mathcal{L}(\bar{x},\bar{y},\bar{u})^{T}\tilde{\beta}+\nabla
	_{x}g^{T}(\bar{x},\bar{y})\tilde{\gamma}=0$. Using (\ref{CQ}), it yields $\nabla G(\bar{x})^{T}\alpha=0$; this implies that $\alpha=0$ ($\bar{x}$ being upper-level regular). Thus,
	\[
	\left\{
	\begin{array}{l}
	\nabla_{x,y}\mathcal{L}(\bar{x},\bar{y},\bar{u})^{T}\tilde{\beta}+\nabla g^{T}(\bar{x},\bar{y})\tilde{\gamma}=0,\\
	\nabla_{y}g_{v}(\bar{x},\bar{y})\tilde{\beta}=0,\text{ \ }\tilde{\gamma}_{\eta}=0,\\
	(\tilde{\gamma}_{i}<0\wedge\nabla_{y}g_{i}(\bar{x},\bar{y})\tilde{\beta}<0)\vee\tilde{\gamma}_{i}\nabla_{y}g_{i}(\bar{x},\bar{y})\tilde{\beta
	}=0\; \mbox{ for }\; i\in\theta;
	\end{array}
	\right.
	\]
	that is, $(-\tilde{\beta},-\tilde{\gamma})\in\Lambda^{em}(\bar{x},\bar{y},
	\bar{u},0)$ and by the condition $(A_{1}^{m}),$ we get $(\tilde{\beta},
	\tilde{\gamma})=0.$ Hence $(\alpha,\tilde{\beta},\tilde{\gamma}
	)=0$  so
	$(\gamma_{\eta\cup\upsilon},\delta_{\eta\cup\upsilon})\neq0.$ Assume that for $i\in\eta$, $\gamma_{i}>0.$ Thus for $k$ large enough,
	$i\in\mathrm{supp}\gamma_{i}^{k}\backslash\upsilon$ and from (\ref{S11}),
	$\nabla_{y}g_{i}(x^{k},y^{k})\tilde{\beta}^{k}=
	-\gamma _{i}^{k}g_{i}(x^{k},y^{k})$. Dividing by $\left\Vert \chi_{k}\right\Vert$ and passing to the limit, we get $\nabla_{y}g_{i}(\bar{x},\bar{y})\tilde{\beta}=-\tilde{\gamma} _{i}g_{i}(\bar{x},\bar{y}) > 0$  and the
	contradiction follows since $\tilde{\beta}=0$. Assume now that there
	exists $i\in\upsilon$ such that $\gamma_{i}>0.$ Thus for $k$ large enough,
	$i\in\mathrm{supp}\gamma_{i}^{k}\backslash\eta$ so that from (\ref{S8}),
	$\tilde{\gamma}_{i}^{k}= \gamma_{i}^{k}u_{i}^{k}$.  Taking the limit, we get $\tilde{\gamma}_{i}=\gamma _{i}\bar{u}_{i} > 0$  which contradict the fact that  $\tilde{\gamma}=0$).
	Similary, if
	$\delta_{\eta\cup\upsilon}\neq0$ we get a contradiction.
	Consequently, the sequence $(
	\alpha^{k},\tilde{\beta}^{k},\tilde{\gamma}^{k},\gamma
	_{\eta\cup\upsilon}^{k},\delta
	_{\eta\cup\upsilon}^{k})_{k}$ is bounded.
	Consider $(\bar{\alpha},\overline{\beta},\overline{\tilde{\gamma}},\bar{\gamma}_{\eta\cup\upsilon},\bar{\delta}_{\eta\cup\upsilon})$ its limit (up to a
	subsequence) so that 
	$
	\mathrm{supp}\bar{\alpha}\subset I_{G},\text{ }\, \mathrm{supp}\overline{\tilde{\gamma}
	}\subset\theta\cup\upsilon.
	$
	Taking the limit in (\ref{S10}), 
	\[
	\left\{
	\begin{array}{l}
	\nabla_{x}F(\bar{x},\bar{y})+\nabla G(\bar{x})^{T}\bar{\alpha}+\nabla
	_{x}\mathcal{L}(\bar{x},\bar{y},\bar{u})^{T}\overline{\beta}+\sum
	\limits_{i=1}^{q}\overline{\tilde{\gamma}}_{i}\nabla_{x}g_{i}(\bar{x},\bar
	{y})=0,\\[1ex]
	\multicolumn{1}{l}{\nabla_{y}F(\bar{x},\bar{y})+\nabla_{y}\mathcal{L}(\bar
		{x},\bar{y},\bar{u})^{T}\overline{\beta}+\sum\limits_{i=1}^{q}\overline
		{\tilde{\gamma}}_{i}\nabla_{y}g_{i}(\bar{x},\bar{y})=0,}
	\end{array}
	\right.
	\]
	with $\bar{\alpha}\geq0$, $\bar{\alpha}^\top G(\bar{x})=0$, $\overline{\tilde{\gamma}}_{\eta}=0$, and $\nabla_{y}g_{v}(\bar
	{x},\bar{y})\overline{\beta}=0$. 	
	It therefore remains to prove that $\overline{\tilde{\gamma}}_{i}\nabla_{y}g_{i}(\bar
	{x},\bar{y})\overline{\beta}\geq0$ for $i\in\theta.$ Let us assume that for
	some $i\in\theta$,
	$\overline{\tilde{\gamma}}_{i}<0$ and $\nabla_{y}g_{i} (\bar{x},\bar{y})\overline{\beta}>0.$
	So from (\ref{S8}), $i\in\mathrm{supp}\delta^{k}$ for all $k$
	large enough so that,
	\begin{equation}
	\nabla_{y}g_{i}(x^{k},y^{k})\tilde{\beta}^{k}=\left\{
	\begin{array}{lll}
	
	-\delta _{i}^{k}(-g_{i}(x^{k},y^{k})+t_{k})& \mbox{ for }\,\, i\in\mathrm{supp}\delta^{k}\backslash\upsilon,& \\
	-\gamma
	_{i}^{k}g_{i}(x^{k},y^{k})-\delta _{i}^{k}(-g_{i}(x^{k},y^{k})+t_{k})  & \mbox{ for }\,\,  i\in \upsilon,&
	\end{array}
	\right.
	\end{equation}
		Thus, it holds that
	$
	\nabla_{y}g_{i}(\bar{x},\bar{y})\overline{\beta} \leq0,
	$
	which is a contradiction with our assumption that $\nabla
	_{y}g_{i}(\bar{x},\bar{y})\overline{\beta}>0.$ Similarly, if $\overline
	{\tilde{\gamma}}_{i}>0$ and $\nabla_{y}g_{i}(\bar{x},\bar{y})\overline{\beta
	}<0.$ Thus, $\tilde{\gamma}^{k}>0$ for all $k$ large enough and by (\ref{S8}),
	$i\in\mathrm{supp}\gamma^{k}$ for all $k $ large enough. Hence,

	\begin{equation}
	\nabla_{y}g_{i}(x^{k},y^{k})\tilde{\beta}^{k}=\left\{
	\begin{array}{lll}
	
	-\delta _{i}^{k}g_{i}(x^{k},y^{k})& \mbox{ for }\,\, i\in\mathrm{supp}\gamma^{k}\backslash\upsilon,& \\
	-\gamma
	_{i}^{k}g_{i}(x^{k},y^{k})-\delta _{i}^{k}(-g_{i}(x^{k},y^{k})+t_{k})  & \mbox{ for }\,\,  i\in \upsilon,&
	\end{array}
	\right.
	\end{equation}
	and the limit leads to the contradiction that  $\nabla_{y}g_{i}(\bar{x},\bar{y})\overline{\beta}\geq0$. Hence, $\overline{\tilde{\gamma}}_{i}(\nabla_{y}g_{i}(\bar
	{x},\bar{y})\bar{\beta})\geq0$ for $i\in\theta$ and the conclusion
	follows.


${}$\\
\noindent \textbf{\underline{Case $ \mathcal{R}$= SU}:}  Similarly to the previous case, we start the proof here by considering the index sets
\[
\begin{array}
[c]{rll}
I_{G}(x) & := & \left\{  i\in\{1,\ldots,p\}:\;\;G_{i}(x)=0\right\}  ,\\
I_{u}(x,y,u) & := & \left\{  i\in\{1,\ldots,q\}:\;u_{i}=0\right\}  ,\\
I_{g}(x,y,u) & := & \left\{  i\in\{1,\ldots,q\}:\text{ }g_{i}(x,y)=0\right\}  ,\\
I_{\varphi^{t}_{SU}}(x,y,u) & := & \left\{  i\in\ 1,\ldots,q:\text{ }\varphi_{i,SU}^{t}(x,y,u)=0\right\},
\end{array}
\]
 for $(x,(y,u))\in X\times\mathcal{D}_{SU}^{t}(x)$ ($t>0$). Hence, 
$i\in I_{\varphi^{t}_{SU}}(x,y,u)$ if and only if $\max(u_{i},-g_{i}(x,y))\geq t$. 
	We know that there exist a
	vector $(y^{k},u^{k})\in\mathcal{S}_{SU}^{t_{k}}(x^{k})$ and multipliers
	$(\alpha^{k},\beta^{k},\mu^{k},\gamma^{k},\delta^{k})$ such that 
	\begin{equation}
	\left\{
	\begin{array}
	[c]{l}
	\nabla_{x}F(x^{k},y^{k})+\nabla G(x^{k})^{\top}\alpha^{k}+\nabla
	_{x}\mathcal{L}(x^{k},y^{k},u^{k})^{\top}\beta^{k}+\sum\limits_{i=1}
	^{q}\left(  \xi_{1i}^{k}\delta_{i}^{k}-\gamma_{i}^{k}\right)  \nabla_{x}
	g_{i}(x^{k},y^{k})=0,\\[1ex]
	\nabla_{y}F(x^{k},y^{k})+\nabla_{y}\mathcal{L}(x^{k},y^{k},u^{k})^{\top}
	\beta^{k}+\sum\limits_{i=1}^{q}\left(  \xi_{1i}^{k}\delta_{i}^{k}-\gamma
	_{i}^{k}\right)  \nabla_{y}g_{i}(x^{k},y^{k})=0,\\[1ex]
	\nabla_{y}g_{i}(x^{k},y^{k})\beta^{k}+\mu_{i}^{k}-(2-\xi_{1i}^{k})\delta
	_{i}^{k}=0,\text{ \ \ }\forall i=1,\ldots,q
	\end{array}
	\right.  \label{t2}
	\end{equation}
	hold together with the following  conditions:
	\begin{equation}
	\left\{
	\begin{array}
	[c]{l}
	\forall j=1,\ldots,p:\;\;\alpha_{j}^{k}\geq0,\;\;G_{j}(x^{k})\leq
	0,\;\;\alpha_{j}^{k}G_{j}(x^{k})=0,\\[1ex]
	\forall i=1,\ldots,q:\;\mu_{i}^{k}\geq0,\;\ \gamma_{i}^{k}\geq0,\,\,\,\delta
	_{i}^{k}\geq0,\\[1ex]
	\forall i=1,\ldots,q:\,\mu_{i}^{k}u_{i}^{k}=0,\ \ \gamma_{i}^{k}g_{i}
	(x^{k},y^{k})=0,\;\;\delta_{i}^{k}\varphi^{t_{k}}_{i,SU}(x^{k},y^{k},u^{k})=0.
	\end{array}
	\right.  \label{t3}
	\end{equation}
	Let for all $k\in
		\mathbb{N}
	$,
	\begin{equation}
	\varepsilon_{1i}^{k}:=\xi_{1i}^{k}\delta_{i}^{k}\text{ and }\varepsilon
	_{2i}^{k}:=(2-\xi_{1i}^{k})\delta_{i}^{k}.\label{t4}
	\end{equation}
	Noting that
	\begin{align}
	\alpha_{i}^{k} &  \geq0,\text{ \ }\mathrm{supp}\alpha^{k}\subset I_{G}
	(x^{k}),\medskip\label{t6}\\
	\mu_{i}^{k} &  \geq0,\text{ \ }\mathrm{supp}\mu^{k}\subset I_{u}(x^{k}
	,y^{k},u^{k}),\medskip\label{t7}\\
	\gamma_{i}^{k} &  \geq0,\text{ \ }\mathrm{supp}\gamma^{k}\subset I_{g}
	(x^{k},y^{k},u^{k}),\medskip\label{t8}\\
	\delta_{i}^{k} &  \geq0,\text{ \ }\mathrm{supp}\delta^{k}\subset I_{\phi^{t_{k}} _{SU}
	}(x^{k},y^{k},u^{k})\label{t9}
	\end{align}
	and
	\begin{align}
	\varepsilon_{1i}^{k}  & >0 \quad \Longleftrightarrow \quad \xi_{1i}^{k}>0\text{ and }\delta
	_{i}^{k}>0,\label{t5}\\[1ex]
	\varepsilon_{2i}^{k}  & >0\quad \Longleftrightarrow \quad \xi_{1i}^{k}<2\text{ and }\delta
	_{i}^{k}>0,\label{t5*}
	\end{align}
	so
	\begin{align}
	\mathrm{supp}\varepsilon_{1}^{k}  & \subset I_{\varphi^{t_{k}} _{SU}
	}(x^{k},y^{k},u^{k})\backslash I_{u}(x^{k},y^{k},u^{k}),\label{T5}\\[1ex]
	\mathrm{supp}\varepsilon_{2}^{k}  & \subset I_{\varphi^{t_{k}} _{SU}
	}(x^{k},y^{k},u^{k})\backslash I_{g}(x^{k},y^{k},u^{k}),\label{T6}
	\end{align}
	for all $k$ large enough. Hence, the system of optimality conditions in
	(\ref{t2}) becomes
	\begin{equation}
	\left\{
	\begin{array}[c]{l}%
	\nabla_{x}F(x^{k},y^{k})+\nabla G(x^{k})^{T}\alpha^{k}+\nabla_{x}%
	\mathcal{L}(x^{k},y^{k},u^{k})^{T}\beta^{k}+\sum\limits_{i=1}^{q}\lambda
	_{1i}^{k}\nabla_{x}g_{i}(x^{k},y^{k})=0,\\[1ex]
	\nabla_{y}F(x^{k},y^{k})+\nabla_{y}\mathcal{L}(x^{k},y^{k},u^{k})^{T}\beta
	^{k}+\sum\limits_{i=1}^{q}\lambda_{1i}^{k}\nabla_{y}g_{i}(x^{k},y^{k})=0,
	\end{array}
	\right.  \label{t11}%
	\end{equation}
	and
	\begin{equation}
	\nabla_{y}g_{i}(x^{k},y^{k})\beta^{k}=\lambda_{2i}^{k}\label{T12}%
	\end{equation}
	with $\lambda_{1i}^{k}:=\xi_{1i}^{k}\delta_{i}^{k}-\gamma_{i}^{k}$ and
	$\lambda_{2i}^{k}:=-\mu_{i}^{k}+(2-\xi_{1i}^{k})\delta_{i}^{k}$. 
	  As $(t_{k})\downarrow0$ and  $u^{k}\rightarrow\bar{u}$ (the sequence $(y^{k}
	,u^{k})_{k}$ (along a subsequence) converges to some point $(\bar{y},\bar
	{u})\in\mathcal{S}_{p}(\bar{x})$), 
	 then for sufficiently large $k\in\mathbb{N}$ it holds
	$u_{i}^{k}>t_{k}$ for all $i\notin I_{u}(\bar{x},\bar{y},\bar{u})=\theta
	\cup\eta$. Hence, by the feasibility of $(y^{k},u^{k})_{k}$, 	$i\in I_{g}(x^{k},y^{k},u^{k})\cap I_{\varphi^{t_{k}} _{SU}
	}(x^{k},y^{k},u^{k})$ for all $i\notin\theta\cup\eta.$ Similarly, we get $i\in I_{u}
	(x^{k},y^{k},u^{k})\cap I_{\varphi^{t_{k}} _{SU}
	}(x^{k},y^{k},u^{k})$ for all
	$i\notin\theta\cup\nu=I_{g}(\bar{x},\bar{y},\bar{u}).$ Therefore, for
	sufficiently large $k\in\mathbb{N}$,
	\[
	(\mu_{i}^{k},\xi_{1i}^{k})=(0,2)\text{\ \ \ }\forall i\notin\theta\cup
	\eta\text{ \ and \ }(\gamma_{i}^{k},\xi_{1i}^{k})=0\text{ \ \ }\forall
	i\notin\theta\cup\nu
	\]
	so that 
    $
	\lambda_{1i}^{k}=0\text{ }\forall i\notin\theta\cup\nu\text{\ \ and \ }%
	\lambda_{2i}^{k}=0\text{ }\forall i\notin\theta\cup\eta.
    $
    Next, we are going to show that the sequence $(\chi_{k}):=
	\begin{pmatrix}
	\alpha^{k},\beta^{k},\lambda_{1}^{k},\lambda_{2}^{k}
	\end{pmatrix}
	_{k}$ is bounded. Assume that this is not the case and consider the sequence
	$\left(  \dfrac{\chi_{k}}{\left\Vert \chi_{k}\right\Vert }\right)  _{k}$ which
	converges to a nonvanishing vector $\chi:=(\bar{\alpha},\bar{\beta}
	,\bar{\lambda}_{1},\bar{\lambda}_{2})$ (otherwise, a suitable subsequence is
	chosen). Dividing (\ref{t11}) by $\left\Vert \chi_{k}\right\Vert $ and taking
	the limit $k\rightarrow\infty$ while taking into account the continuous
	differentiability of all involved functions, we come up with
	\begin{align}
	\nabla G(\bar{x})^{T}\bar{\alpha}+\nabla_{x}\mathcal{L}(\bar{x},\bar{y}
	,\bar{u})^{T}\bar{\beta}+\sum\limits_{i=1}^{q}\bar{\lambda}_{1i}\nabla
	_{x}g_{i}(\bar{x},\bar{y})  & =0,\bigskip\label{t13}\\
	\mbox{ and }\nabla_{y}\mathcal{L}(\bar{x},\bar{y},\bar{u})^{T}\bar{\beta}
	+\sum\limits_{i=1}^{q}\bar{\lambda}_{1i}\nabla_{y}g_{i}(\bar{x},\bar{y})  &
	=0,\label{t14}%
	\end{align}
	where, clearly, $\bar{\lambda}_{1\mid\eta}=0$ and $\bar{\lambda}_{2\mid\nu}=0.$
	We prove now that for all $i\in\theta$: 
	\[
	(\bar{\lambda}_{1i}<0\wedge\nabla_{y}g_{i}(\bar{x},\bar{y})\bar{\beta}
	<0)\vee\bar{\lambda}_{1i}\nabla_{y}g_{i}(\bar{x},\bar{y})\bar{\beta}=0. 
	\]
Let $\bar{\lambda}_{1i}>0$ or $\nabla_{y}
	g_{i}(\bar{x},\bar{y})\bar{\beta}>0$ for some $i\in\theta$ with $\bar{\lambda
	}_{1i}\nabla_{y}g_{i}(\bar{x},\bar{y})\bar{\beta}\neq0.$ If $\bar{\lambda
	}_{1i}>0$ then for all $k$ large enough, $\lambda_{1i}^{k}=\varepsilon
	_{1i}^{k}-\gamma_{i}^{k}>0$ so $\varepsilon_{1i}^{k}>0.$ Now using (\ref{t5})
	and (\ref{T5}) and the feasibility of $(x^{k},y^{k},u^{k})$,  $\xi
	_{1i}^{k}=2$ and $\mu_{i}^{k}=0$ since 
    \[i\in I_{g}(x^{k},y^{k},u^{k})\cap I_{\varphi^{t_{k}} _{SU}
	}(x^{k},y^{k},u^{k})\backslash I_{u}(x^{k}
	,y^{k},u^{k})
    \]
    so $\lambda_{2i}^{k}=0$. Thus, $\nabla_{y}g_{i}
	(\bar{x},\bar{y})\bar{\beta}=0$,  which contradicts our assumption. Suppose now
	that $\nabla_{y}g_{i}(\bar{x},\bar{y})\bar{\beta}>0$, that is $\bar{\lambda
	}_{2,i}>0$ then for any $k$ large, $\bar{\lambda}_{2i}^{k}=-\mu_{i}^{k}
	+\varepsilon_{2i}^{k}>0$ this implies that $\varepsilon_{2i}^{k}>0.$ And from
	(\ref{t5*}) and (\ref{T6}), $\xi_{1i}^{k}=0$ and $\gamma_{i}^{k}=0$ (since
	$i\in I_{u}(x^{k},y^{k},u^{k})\cap I_{\varphi^{t_{k}} _{SU}
	}(x^{k},y^{k},u^{k})\backslash I_{g}(x^{k},y^{k},u^{k})$) and this leads to a
	contradiction too.
	Consequently, 
	\[
	\left\{
	\begin{array}
	[c]{l}
	\nabla_{y}\mathcal{L}(\bar{x},\bar{y},\bar{u})^{T}\bar{\beta}+\nabla_{y}
	g^{T}(\bar{x},\bar{y})\bar{\lambda}_{1}=0,\\[1ex]
	\nabla_{y}g_{\nu}(\bar{x},\bar{y})\bar{\beta}=\bar{\lambda}_{2\mid\nu
	}=0,\text{ \ }\bar{\lambda}_{1\mid\eta}=0,\\[1ex]
	(\bar{\lambda}_{1i}<0\wedge\nabla_{y}g_{i}(\bar{x},\bar{y})\bar{\beta}
	<0)\vee\bar{\lambda}_{1i}\nabla_{y}g_{i}(\bar{x},\bar{y})\bar{\beta
	}=0\;\mbox{ for }\;i\in\theta,
	\end{array}
	\right.
	\]
	so that $(-\bar{\beta},-\bar{\lambda}_{1})\in\Lambda_{y}^{em}(\bar{x},\bar
	{y},\bar{u},0)$ and by condition $(A_{2}^{m})$, we get
	$
	\nabla_{x}\mathcal{L}(\bar{x},\bar{y},\bar{u})^{T}\bar{\beta}+\nabla_{y}%
	g^{T}(\bar{x},\bar{y})\bar{\lambda}_{1}=0
	$
	using (\ref{t13}), it yields $\nabla G(\bar{x})^{T}\bar{\alpha}=0$; this
	implies that $\bar{\alpha}=0$. Thus,
	\[
	\left\{
	\begin{array}
	[c]{l}
	\nabla_{x,y}\mathcal{L}(\bar{x},\bar{y},\bar{u})^{T}\bar{\beta}+\nabla
	_{y}g^{T}(\bar{x},\bar{y})\bar{\lambda}_{1}=0,\\[1ex]
	\nabla_{y}g_{\nu}(\bar{x},\bar{y})\bar{\beta}=0,\text{ \ }\bar{\lambda}
	_{1\mid\eta}=0,\\[1ex]
	(\bar{\lambda}_{1i}<0\wedge\nabla_{y}g_{i}(\bar{x},\bar{y})\bar{\beta}
	<0)\vee\bar{\lambda}_{1i}\nabla_{y}g_{i}(\bar{x},\bar{y})\bar{\beta
	}=0\;\mbox{ for }\;i\in\theta,
	\end{array}
	\right.
	\]
	that is, $(-\beta,-\bar{\lambda}_{1})\in\Lambda^{em}(\bar{x},\bar{y},\bar
	{u},0)$ and by $(A_{1}^{m}),$ we get $(-\beta,-\bar{\lambda}
	_{1})=0.$ Hence $(\bar{\alpha},\bar{\beta},\bar{\lambda}_{1},\bar{\lambda}
	_{2})=0$ (here $\bar{\lambda}_{2i}^{k}=\nabla_{y}g_{i}(x^{k},y^{k})\beta
	^{k}\rightarrow\nabla_{y}g_{i}(\bar{x},\bar{y})\bar{\beta}=0$) which is a contradiction. 	Consequently, the sequence $
	\begin{pmatrix}
	\alpha^{k},\beta^{k},\lambda_{1}^{k},\lambda_{2}^{k}
	\end{pmatrix}
	_{k}$ is bounded. Consider $(\bar{\alpha},\overline{\beta},\bar{\lambda}
	_{1},\bar{\lambda}_{2})$ its limit (up to a subsequence) so that
$\mathrm{supp}\bar{\alpha}\subset I_{G}$, $\bar{\lambda}_{1\mid\eta
	}=0$, and $\bar{\lambda}_{2\mid\nu}=0$. 
	Taking the limit in (\ref{t11}), we obtain
	\[
	\left\{
	\begin{array}
	[c]{l}
	\nabla_{x}F(\bar{x},\bar{y})+\nabla G(\bar{x})^{T}\bar{\alpha}+\nabla
	_{x}\mathcal{L}(\bar{x},\bar{y},\bar{u})^{T}\overline{\beta}+\nabla_{x}
	g^{T}(\bar{x},\bar{y})\bar{\lambda}_{1}=0,\\[1ex]
	\nabla_{y}F(\bar{x},\bar{y})+\nabla_{y}\mathcal{L}(\bar{x},\bar{y},\bar
	{u})^{T}\overline{\beta}+\nabla_{x}g^{T}(\bar{x},\bar{y})\bar{\lambda}_{1}=0,
	\end{array}
	\right.
	\]
	with
	\[
	\bar{\alpha}\geq0,\text{ \ }G(\bar{x})\leq0,\text{  }\bar{\alpha}^{\top}
	G(\bar{x})=0, \text{ \ and  }\nabla_{y}g_{\nu}(\bar{x},\bar{y})\overline{\beta
	}=\bar{\lambda}_{2\mid\nu}=0.
	\]
	Let us prove now that for any $i\in\theta,$ $\bar{\lambda}_{1i}\nabla_{y}
	g_{i}(\bar{x},\bar{y})\overline{\beta}\geq0.$ Assume that there is $i\in
	\theta$ such that $\bar{\lambda}_{1i}<0$ and $\nabla_{y}g_{i}(\bar{x},\bar
	{y})\overline{\beta}>0.$ So for $k$ large enough  $\lambda_{1i}^{k}
	=\varepsilon_{1i}^{k}-\gamma_{i}^{k}<0$ and $\lambda_{2i}^{k}=\varepsilon
	_{2i}^{k}-\mu_{i}^{k}>0$ so that $\varepsilon_{2i}^{k}>0$ since $\mu_{i}
	^{k}\geq0.$ Hence from (\ref{T6}), $\gamma_{i}^{k}=0$ then $\varepsilon
	_{1i}^{k}<0$ which impossible. Assume now that $\bar{\lambda}_{1i}>0$ and
	$\nabla_{y}g_{i}(\bar{x},\bar{y})\overline{\beta}<0.$ Similarly, we get that
	for $k$ large, $\lambda_{1i}^{k}=\varepsilon_{1i}^{k}-\gamma_{i}^{k}>0$ and
	$\lambda_{2i}^{k}=\varepsilon_{2i}^{k}-\mu_{i}^{k}<0$ so that $\varepsilon
	_{1i}^{k}>0.$ This implies from (\ref{T5}) that $\mu_{i}^{k}=0$ and so
	$\varepsilon_{2i}^{k}<0$ which contradicts the fact that $\varepsilon_{2i}
	^{k}\geq0$ for any $k.$ Consequently,
	$\bar{\lambda}_{1i}\nabla_{y}g_{i}(\bar{x},\bar{y})\overline{\beta}\geq0$ for all $i\in\theta$; i.e., $\bar{x}$ is a C-stationary point. \\

\noindent \textbf{\underline{Case $ \mathcal{R}$= KDB}:} Also in this case, we need  the following index subsets  defined for
$(x,(y,u))\in X\times\mathcal{D}_{KDB}^{t}(x)$ ($t>0$):
\begin{align*}
I_{u}(x,y,u,t)  & :=\{i:  u_{i}+t=0\},\\
I^{u}(x,y,u,t)  & :=\{i: u_{i}-t=0\},\\
I^{u+}(x,y,u,t)  & :=\{i: u_{i}-t>0,\text{ }g_{i}(x,y)+t=0\},\\
I^{u-}(x,y,u,t)  & :=\{i: u_{i}-t<0,\text{\ }g_{i}(x,y)+t=0\},\\
I_{g}(x,y,u,t)  & :=\{i: g_{i}(x,y)-t=0\},\\
I^{g}(x,y,u,t)  & :=\{i: g_{i}(x,y)+t=0\},\\
I^{g+}(x,y,u,t)  & :=\{i: u_{i}-t=0,\text{ }g_{i}(x,y)+t>0\},\\
I^{g-}(x,y,u,t)  & :=\{i: u_{i}-t=0,\text{ }g_{i}(x,y)+t<0\},\\
I^{ug}(x,y,u,t)  & :=\{i: u_{i}-t=g_{i}(x,y)+t=0\}.
\end{align*}
From the assumptions of Theorem \ref{ConvergenceResult} for $\mathcal{R}=KDB$, there exist a vector
	$(y^{k},u^{k})\in\mathcal{S}_{KDB}^{t_{k}}(x^{k})$ and multipliers
	$(\alpha^{k},\beta^{k},\mu^{k},\gamma^{k},\delta^{k})$ such that the following
	properties (\ref{k3}), (\ref{k4}), and (\ref{k5}) hold: 
	\begin{equation}
	\left\{
	\begin{array}
	[c]{c}
	\nabla_{x}F(x^{k},y^{k})+\nabla G^{T}(x^{k})\alpha^{k}-\nabla_{x}
	\mathcal{L}(x^{k},y^{k},u^{k})^{T}\beta^{k}+\sum\limits_{i=1}^{q}(\delta
	_{i}^{k}(u_{i}^{k}-t_{k})-\gamma_{i}^{k})\nabla_{x}g_{i}(x^{k},y^{k})=0,\\[1ex]
	\multicolumn{1}{l}{\nabla_{y}F(x^{k},y^{k})-\nabla_{y}\mathcal{L}(x^{k}
		,y^{k},u^{k})^{T}\beta^{k}+\sum\limits_{i=1}^{q}(\delta_{i}^{k}(u_{i}
		^{k}-t_{k})-\gamma_{i}^{k})\nabla_{y}g_{i}(x^{k},y^{k})=0,}\\[1ex]
	\multicolumn{1}{l}{-\nabla_{y}g_{i}(x^{k},y^{k})\beta^{k}+\mu_{i}^{k}
		+\delta_{i}^{k}(g_{i}(x^{k},y^{k})+t_{k})=0,\;i=1,\ldots,q}
	\end{array}
	\right. \label{k3}
	\end{equation}
	with
	\begin{equation}
	\alpha_{i}^{k}\geq0,\text{ }\alpha_{i}^{k}G_{i}(x^{k}
	)=0\ \ \ i=1,\ldots,p\label{k4}
	\end{equation}
	and for $i=1,\ldots,q$,
	\begin{equation}
	\left\{
	\begin{array}[l]{l}
	\mu_{i}^{k}\geq0\text{, }\mu_{i}^{k}(u_{i}^{k}+t_{k})=0,\\[1ex]
	\gamma_{i}^{k}\geq0,\text{ }\gamma_{i}^{k}(g_{i}(x^{k},y^{k})-t_{k}
	)=0,\\[1ex]
	\delta_{i}^{k}\geq0,\text{ }\delta_{i}^{k}(u_{i}^{k}-t_{k})(g_{i}(x^{k}
	,y^{k})+t_{k})=0.
	\end{array}
	\right. \label{k5}
	\end{equation}
	Putting
	\begin{equation}
	\xi_{i}^{u,k}:=\delta_{i}^{k}(g_{i}(x^{k},y^{k})+t_{k}),\text{ \ \ \ }\xi
	_{i}^{g,k}:=-\delta_{i}^{k}(u_{i}^{k}-t_{k}), \label{kdb6}
	\end{equation}
	 $\xi_{i}^{u,k}>0$ and $\xi_{i}^{g,k}>0$ whenever $i\in
	\mathrm{supp}\xi^{u,k}\cap\mathrm{supp}\gamma^{k}$ and $i\in\mathrm{supp}
	\xi^{g,k}\cap\mathrm{supp}\mu^{k}$ respectively, we also have 
	\begin{align}
	\mathrm{supp}\mu^{k}\cap\mathrm{supp}\xi^{u,k}  & =\varnothing,\medskip
	\label{kd11}\\
	\mathrm{supp}\gamma^{k}\cap\mathrm{supp}\xi^{g,k}  & =\varnothing
	,\medskip\label{kd12}\\
	\mathrm{supp}\xi^{u,k}\cap\mathrm{supp}\xi^{g,k}  & =\varnothing.\label{kd13}%
	\end{align}
	Also one can check that for $k$ sufficiently large,
	\begin{align}
	\mathrm{supp}\mu^{k} &  \subset I_{u}(x^{k},y^{k},u^{k},t_{k})\subset
	\theta\cup\eta,\medskip\label{k7}\\
	\mathrm{supp}\gamma^{k} &  \subset I_{g}(x^{k},y^{k},u^{k},t_{k})\subset
	\theta\cup\nu,\medskip\label{k8}\\
	\mathrm{supp}\xi^{u,k} &  \subset I^{u}(x^{k},y^{k},u^{k},t_{k})\subset
	\theta\cup\eta,\medskip\label{k9}\\
	\mathrm{supp}\xi^{g,k} &  \subset I^{g}(x^{k},y^{k},u^{k},t_{k})\subset
	\theta\cup\nu.\label{k10}
	\end{align}
	By (\ref{kdb6}) and setting $\tilde{\beta}^{k}:=-\beta^{k},$ the optimality
	conditions become as follows:
	\begin{equation}
	\left\{
	\begin{array}
	[c]{c}%
	\nabla_{x}F(x^{k},y^{k})+\nabla G^{T}(x^{k})\alpha^{k}+\nabla_{x}
	\mathcal{L}(x^{k},y^{k},u^{k})^{T}\tilde{\beta}^{k}-\sum\limits_{i=1}^{q}
	(\xi_{i}^{g,k}+\gamma_{i}^{k})\nabla_{x}g_{i}(x^{k},y^{k})=0,\\[1ex]
	\multicolumn{1}{l}{\nabla_{y}F(x^{k},y^{k})+\nabla_{y}\mathcal{L}(x^{k}
		,y^{k},u^{k})^{T}\tilde{\beta}^{k}-\sum\limits_{i=1}^{q}(\xi_{i}^{g,k}
		+\gamma_{i}^{k})\nabla_{y}g_{i}(x^{k},y^{k})=0}
	\end{array}
	\right. \label{kd16}
	\end{equation}
	with%
	\begin{equation}
	\nabla_{y}g_{i}(x^{k},y^{k})\tilde{\beta}^{k}=\left\{
	\begin{tabular}
	[c]{lll}%
	$-\mu_{i}^{k}$ & if & $i\in\mathrm{supp}\mu^{k}$,\\
	$-\xi_{i}^{u,k}$ & if & $i\in\mathrm{supp}\xi^{u,k}$,\\
	$0$ & if & $i\notin(\mathrm{supp}\mu^{k}\cup\mathrm{supp}\xi^{u,k}).$
	\end{tabular}
	\right. \label{kd17}
	\end{equation}
	 Now let us prove that the sequence
	$(\chi_{k}):=
	\begin{pmatrix}
	\alpha^{k},\tilde{\beta}^{k},\mu^{k},\gamma^{k},\xi^{u,k},\xi^{g,k}
	\end{pmatrix}
	_{k}$ is bounded. To this end, we assume that this is not the case.
	Nevertheless, we may suppose, without loss of generality, that there is a
	nonvanishing vector $(\alpha,\tilde{\beta},\mu,\gamma,\xi^{u},\xi^{g})$ such
	that the sequence $\left(\dfrac{\chi_{k}}{\left\Vert \chi_{k}\right\Vert }\right)_{k}$
	converges to $(\alpha,\tilde{\beta},\mu,\gamma,\xi^{u},\xi^{g})$.
	Hence, $\alpha_{i}\geq0,\ \alpha_{i}G_{i}(\bar{x})=0\ (\forall
	i=1,..,p$) and for $k$ sufficiently large,%
	\begin{align*}
	\mathrm{supp}\alpha & \subset\mathrm{supp}\alpha^{k},\text{ }\mathrm{supp}%
	\mu\subset\mathrm{supp}\mu^{k},\text{ }\mathrm{supp}\gamma\subset
	\mathrm{supp}\gamma^{k},\\[1ex]
	\mathrm{supp}\xi^{u}  & \subset\mathrm{supp}\xi^{u,k},\text{ }\mathrm{supp}%
	\xi^{g}\subset\mathrm{supp}\xi^{g,k},
	\end{align*}
	and observe also that $\mathrm{supp}\xi^{u}\cap\mathrm{supp}\xi^{g}=\varnothing.$
	Taking the limit as $k\rightarrow\infty$ in the system of optimality
	conditions after dividing by $\left\Vert \chi_{k}\right\Vert $,
	\begin{equation}
	\left\{
	\begin{array}
	[c]{l}
	\nabla G(\bar{x})^{T}\alpha+\nabla_{x}\mathcal{L}(\bar{x},\bar{y},\bar{u}%
	)^{T}\tilde{\beta}-\sum\limits_{i=1}^{q}(\xi_{i}^{g}+\gamma_{i})\nabla
	_{x}g_{i}(\bar{x},\bar{y})=0,\\[1ex]
	\nabla_{y}\mathcal{L}(\bar{x},\bar{y},\bar{u})^{T}\tilde{\beta}-\sum
	\limits_{i=1}^{q}(\xi_{i}^{g}+\gamma_{i})\nabla_{y}g_{i}(\bar{x},\bar
	{y})=0,\\[1ex]
	\nabla_{y}g_{i}(\bar{x},\bar{y})\tilde{\beta}+\mu_{i}+\xi_{i}^{u}=0\text{
		\ \ \ \ \ }\forall i=1,\ldots,q.
	\end{array}
	\right. \label{kd19}%
	\end{equation}
	Consider now the following new multipliers by setting%
	\begin{equation}
	\tilde{\gamma}_{i}:=\left\{
	\begin{array}
	[c]{ll}%
	-\gamma_{i} & \mbox{ if }\text{ }i\in\mathrm{supp}\gamma,\medskip\\
	-\xi_{i}^{g} & \mbox{ if }\text{ }i\in\mathrm{supp}\xi^{g},\medskip\\
	0 & \mbox{otherwise}
	\end{array}
	\right.  \text{ \ and \ }\tilde{\mu}_{i}:=\left\{
	\begin{array}
	[c]{ll}
	\mu_{i} & \mbox{ if }\text{ }i\in\mathrm{supp}\mu,\medskip\\
	\xi_{i}^{u} & \mbox{ if }\text{ }i\in\mathrm{supp}\xi^{u},\medskip\\
	0 & \mbox{otherwise}.
	\end{array}
	\right. \label{kd21}
	\end{equation}
	Thanks to properties (\ref{kd11}), (\ref{kd12}) and (\ref{kd13}), we can
	rewrite (\ref{kd19}) as:
	\begin{equation}
	\left\{
	\begin{array}
	[c]{l}
	\nabla G(\bar{x})^{T}\alpha+\nabla_{x}\mathcal{L}(\bar{x},\bar{y},\bar{u}
	)^{T}\tilde{\beta}+\sum\limits_{i=1}^{q}\tilde{\gamma}_{i}\nabla_{x}g(\bar{
		x},\bar{y})=0,\\[1ex]
	\nabla_{y}\mathcal{L}(\bar{x},\bar{y},\bar{u})^{T}\tilde{\beta} +\sum
	\limits_{i=1}^{q}\tilde{\gamma}_{i}\nabla_{y}g(\bar{x},\bar{y} )=0,\\[1ex]
	\nabla_{y}g_{i}(\bar{x},\bar{y})\tilde{\beta}=-\tilde{\mu}_{i}\text{
		\ \ \ \ \ }\forall i=1,\ldots,q.
	\end{array}
	\right. \label{kd21*}
	\end{equation}
	On the other hand, we have $\tilde{\gamma}_{\eta}=0$ since $i\notin
	\mathrm{supp}\gamma\cup\mathrm{supp}\xi^{g}$ when $i\in\eta$ and we have
	$\nabla_{y}g_{\nu}(\bar{x},\bar{y})\tilde{\beta}=0$ since $i\notin
	\mathrm{supp}\mu\cup\mathrm{supp}\xi^{u}$ whenever $i\in v$. Let us prove now
	that for all $i\in\theta$,
	\[
	\tilde{\gamma}_{i}\nabla_{y}g_{i}(\bar{x},\bar{y})\tilde{\beta}\geq0.
	\]
	Assume that it is not the case for some $i\in\theta.$ If $\tilde{\gamma}
	_{i}>0$ and $\nabla_{y}g_{i}(\bar{x},\bar{y})\tilde{\beta}<0$ thus
	$i\in\mathrm{supp}\xi^{g}\cap\mathrm{supp}\mu$ (since $\mathrm{supp}\xi
	^{g}\cap\mathrm{supp}\xi^{u}=\emptyset$)$.$ The case implies that for $k $
	large enough, $i\in\mathrm{supp}\xi^{g,k}\cap\mathrm{supp}\mu^{k}$ so $\xi
	_{i}^{g,k}>0$ thus $\tilde{\gamma}_{i}=-\xi_{i}^{g}\leq0$ and the
	contradiction follows. Similarly, if $\tilde{\gamma}_{i}<0$ and
	$\nabla_{y}g_{i}(\bar{x},\bar{y})\tilde{\beta}>0$ so from (\ref{kd21})
	-(\ref{kd21*}), $i\in\mathrm{supp}\gamma\cap\mathrm{supp}\xi^{u}$ so that, for
	$k$ large enough, $i\in\mathrm{supp}\gamma^{k}\cap\mathrm{supp}\xi^{u,k}$ thus
	$\xi_{i}^{u,k}>0$ which implies that $\nabla_{y}g_{i}(\bar{x},\bar{y}
	)\tilde{\beta}=-\xi_{i}^{u}\leq0.$ We conclude that $\tilde{\gamma}_{i}\leq0$
	and $\nabla_{y}g_{i}(\bar{x},\bar{y})\tilde{\beta}\leq0$ for all $i\in\theta.$
	Consequently, 
	\[
	\left\{
	\begin{array}
	[c]{l}
	\nabla_{y}\mathcal{L}(\bar{x},\bar{y},\bar{u})^{T}\tilde{\beta}+\nabla
	_{y}g^{T}(\bar{x},\bar{y})\tilde{\gamma}=0,\medskip\\
	\nabla_{y}g_{v}(\bar{x},\bar{y})\tilde{\beta}=0,\text{ \ }\gamma_{\eta
	}=0,\medskip\\
	\tilde{\gamma}_{i}\nabla_{y}g_{i}(\bar{x},\bar{y})\tilde{\beta}\geq0\text{
		\ \ \ }\forall i\in\theta,
	\end{array}
	\right.
	\]
	so that, $(-\tilde{\beta},-\tilde{\gamma})\in\Lambda_{y}^{ec}(\bar{x},\bar
	{y},\bar{u},0)$ and by condition $(A_{2}^{c}),$ we get
	$
	\nabla_{x}\mathcal{L}(\bar{x},\bar{y},\bar{u})^{T}\tilde{\beta}+\nabla
	_{x}g^{T}(\bar{x},\bar{y})\tilde{\gamma}=0.
	$
	Using (\ref{kd21*}), $\nabla G(\bar{x})^{T}\alpha=0$ so that
	$\alpha=0$ ($\bar{x}$ is upper-level regular). Thus it holds that
	\[
	\left\{
	\begin{array}
	[c]{l}%
	\nabla_{x,y}\mathcal{L}(\bar{x},\bar{y},\bar{u})^{T}\tilde{\beta}+\nabla
	g^{T}(\bar{x},\bar{y})\tilde{\gamma}=0,\medskip\\
	\nabla_{y}g_{v}(\bar{x},\bar{y})\tilde{\beta}=0,\text{ }\tilde{\gamma}_{\eta
	}=0,\medskip\\
	\tilde{\gamma}_{i}(\nabla_{y}g_{i}(\bar{x},\bar{y}))\tilde{\beta}\geq0\text{
		\ \ \ }\forall i\in\theta.
	\end{array}
	\right.
	\]
	That is, $(-\tilde{\beta},-\tilde{\gamma})\in\Lambda^{ec}(\bar{x},\bar{y}
	,\bar{u},0)$ so by the CQ condition $(A_{1}^{c}),$ $(-\tilde{\beta}
	,-\tilde{\gamma})=0.$ Hence $(\alpha,\tilde{\beta},\mu,\gamma,\xi^{u},\xi
	^{g})=0$ (observe that $(\mu,\xi^{u},\xi^{g})=0$ since $\tilde{\mu}
	=\tilde{\gamma}=0$), and the contradiction follows. Consequently, the sequence
	$(\alpha^{k},\tilde{\beta}^{k},\mu^{k},\gamma^{k},\xi^{u,k},\xi^{g,k})_{k}$ is
	bounded. Consider $(\bar{\alpha},\bar{\beta},\bar{\mu},\overline{\gamma}%
	,\bar{\xi}^{u},\bar{\xi}^{g})$ its limit (up to a subsequence) then for $k$
	large enough, the following inclusions are immediate
	\begin{equation}
	\left\{
	\begin{array}
	[c]{l}
	\mathrm{supp}\bar{\alpha}\subset\mathrm{supp}\alpha^{k},\text{ }
	\mathrm{supp}\bar{\mu}\subset\mathrm{supp}\mu^{k},\text{ }\mathrm{supp}
	\bar{\gamma}\subset\mathrm{supp}\gamma^{k}\text{,\medskip\medskip} \\[1ex]
	\mathrm{supp}\bar{\xi}^{u}\subset\mathrm{supp}\xi^{u,k}\text{ , }
	\mathrm{supp}\bar{\xi}^{g}\subset\mathrm{supp}\xi^{g,k}.
	\end{array}
	\right. \label{sup}
	\end{equation}
	Taking the limit in the system of optimality conditions, we obtain%
	\begin{equation}
	\left\{
	\begin{array}
	[c]{l}
	\nabla_{x}F(\bar{x},\bar{y})+\nabla G(\bar{x})^{T}\bar{\alpha}+\nabla
	_{x}\mathcal{L}(\bar{x},\bar{y},\bar{u})^{T}\bar{\beta}-\sum\limits_{i=1}
	^{q}(\overline{\gamma}_{i}+\bar{\xi}_{i}^{g})\nabla_{x}g_{i}(\bar{x},\bar
	{y})=0,\\[1ex]
	\nabla_{y}F(\bar{x},\bar{y})+\nabla_{y}\mathcal{L}(\bar{x},\bar{y},\bar
	{u})^{T}\bar{\beta}-\sum\limits_{i=1}^{q}(\overline{\gamma}_{i}+\bar{\xi}
	_{i}^{g})\nabla_{y}g_{i}(\bar{x},\bar{y})=0,\\[1ex]
	\nabla_{y}g_{i}(\bar{x},\bar{y})\bar{\beta}+\bar{\mu}_{i}+\bar{\xi}_{i}
	^{u}=0\text{ \ \ \ \ \ }\forall i=1,\ldots,q
	\end{array}
	\right. \label{kd25}
	\end{equation}
	with
	$\bar{\alpha}_{i}  \geq0$, $\bar{\alpha}_{i}G_{i}(\bar{x})=0$ for $i=1,..,p$, 
    $\overline{\gamma}_{\eta}  =0$, and $\nabla_{y}g_{v}(\bar{x},\bar{y})\bar{\beta}=0$.
	Consider now
	\[
	\lambda_{i}^{u}:=\left\{
	\begin{array}
	[c]{ll}
	\bar{\mu}_{i} & \mbox{ if }\text{ }i\in\mathrm{supp}\bar{\mu},\text{\medskip}\\
	\bar{\xi}_{i}^{u} & \mbox{ if }\text{ }i\in\mathrm{supp}\bar{\xi}%
	^{u},\text{\medskip}\\
	0 & \mbox{otherwise}
	\end{array}
	\right.  \text{ \ and\ \ }\lambda_{i}^{g}:=\left\{
	\begin{array}
	[c]{ll}%
	-\bar{\gamma}_{i} & \mbox{ if }\text{ }i\in\mathrm{supp}\bar{\gamma
	},\text{\medskip}\\
	-\bar{\xi}_{i}^{g} & \mbox{ if }\text{ }i\in\mathrm{supp}\bar{\xi}
	^{g},\text{\medskip}\\
	0 & \mbox{otherwise}
	\end{array}
	\right.
	\]
	for $i=1,\ldots,q$. So (\ref{kd25}) becomes
	\[
	\left\{
	\begin{array}
	[c]{l}
	\nabla_{x}F(\bar{x},\bar{y})+\nabla G(\bar{x})^{T}\bar{\alpha}+\nabla
	_{x}\mathcal{L}(\bar{x},\bar{y},\bar{u})^{T}\bar{\beta}+\sum\limits_{i=1}
	^{q}\lambda_{i}^{g}\nabla_{x}g_{i}(\bar{x},\bar{y})=0,\\[1ex]
	\nabla_{y}F(\bar{x},\bar{y})+\nabla_{y}\mathcal{L}(\bar{x},\bar{y},\bar
	{u})^{T}\bar{\beta}+\sum\limits_{i=1}^{q}\lambda_{i}^{g}\nabla_{y}g_{i}
	(\bar{x},\bar{y})=0,\\[1ex]
	\nabla_{y}g_{i}(\bar{x},\bar{y})\bar{\beta}=-\lambda_{i}^{u}\text{
		\ \ \ \ \ }\forall i=1,\ldots,q.
	\end{array}
	\right.
	\]
	It remains to prove that
	\[
	\left\{
	\begin{array}
	[c]{l}
	\nabla_{y}g_{\nu}(\bar{x},\bar{y})\bar{\beta}=0,\text{ \ \ \ }\lambda_{\eta
	}^{g}=0,\\[1ex]
	(\lambda_{i}^{g}<0\wedge\nabla_{y}g_{i}(\bar{x},\bar{y})\bar{\beta}
	<0)\vee\lambda_{i}^{g}\nabla_{y}g_{i}(\bar{x},\bar{y})\bar{\beta}=0\text{
		\ \ \ }\forall i\in\theta.
	\end{array}
	\right.
	\]
	Let $i\in\nu$ so from (\ref{k7}) and (\ref{k9},) $i\notin\mathrm{supp}\mu
	^{k}\cup\mathrm{supp}\xi^{u,k}$. Using (\ref{sup}), $i\notin\mathrm{supp}%
	\bar{\mu}\cup\mathrm{supp}\bar{\xi}^{u}$ which leads to $\lambda_{i}^{u}=0$
	and so is $\nabla_{y}g_{i}(\bar{x},\bar{y})\bar{\beta}.$ Similary when
	$i\in\eta,$ $i\notin\mathrm{supp}\gamma^{k}\cup\mathrm{supp}\xi^{g,k}$ by
	(\ref{k8}-\ref{k10}) which implies from (\ref{sup}) that $i\notin
	\mathrm{supp}\bar{\gamma}\cup\mathrm{supp}\bar{\xi}^{g}$ so that, $\lambda
	_{i}^{g}=0.$ Consider now $i\in\theta.$ If $\lambda_{i}^{u}=0$ or $\lambda
	_{i}^{g}=0,$ the condition of M-stationarty is trivial for such $i$. Assume
	that for some $i\in\theta,$ $\lambda_{i}^{g}>0$ so $i\in\mathrm{supp}\bar{\xi
	}^{g}.$ If in addition, $i\in\mathrm{supp}\bar{\mu}$ this implies from
	(\ref{sup}) that $i\in\mathrm{supp}\xi^{g,k}\cap\mathrm{supp}\mu^{k}$ for $k$
	sufficiently large which in turn implies that $\xi^{g,k}>0 $ for $k$ large
	enaugh$.$ Therefore $\bar{\xi}^{g}\geq0$ and then $\lambda_{i}^{g}\leq0$ so
	that such $i\notin\mathrm{supp}\bar{\mu}.$ If now that the considered index
	$i\in\mathrm{supp}\bar{\xi}^{u}$ so from (\ref{sup}), $i\in\mathrm{supp}%
	\xi^{u,k}\cap\mathrm{supp}\xi^{g,k}$ for $k$ sufficiently large which
	contradicts property (\ref{kd13}) and so such an index $i$ does not exists.
	Suppose now that for $i\in\theta,$ $\nabla_{y}g_{i}(\bar{x},\bar{y})\bar
	{\beta}>0$ that is, $\lambda_{i}^{u}<0$ so $i\in\mathrm{supp}\bar{\xi}^{u}.$
	If in addition $i\in\mathrm{supp}\bar{\gamma}$ so $i\in\mathrm{supp}\xi
	^{u,k}\cap\mathrm{supp}\gamma^{k}$ for $k$ sufficiently large which gives that
	$\xi^{u,k}>0.$ Thus $\bar{\xi}^{u}\geq0$ so that $\lambda_{i}^{u}\geq0$ and
	such index $i\notin\mathrm{supp}\bar{\gamma}$. Moreover, as $i\in
	\mathrm{supp}\bar{\xi}^{u},$ under properties (\ref{kd13}) and (\ref{sup}),
	such $i\notin\mathrm{supp}\bar{\xi}^{g}.$ Finally, we conclude that $\bar{x}$
	is a M-stationary point. 

${}$\\
\noindent \textbf{\underline{Case $ \mathcal{R}$= KS}:} Here also we need to introduce new index sets defined for
$(x,(y,u))\in X\times\mathcal{D}_{KS}^{t}(x)$ as
\begin{align*}
I_{G}(x,y,u)  & :=\{i: G_{i}(x)=0\},\\
I_{u}(x,y,u)  & :=\{i: u_{i}=0\},\\
I_{g}(x,y,u)  & :=\{i: g_{i}(x,y)=0\},\\
I_{\varphi^{t}_{KS}}(x,y,u)  & :=\{i: \varphi^{t}_{i,KS}(x,y,u)=0\},
\end{align*}
together with the following partition of the index set $I_{\varphi^{t}_{KS}}(x,y,u)$: 
\begin{align*}
I_{\varphi^{t}_{KS}}^{00}(x,y,u)  & :=\left\{i\in I_{\varphi^{t}_{KS}}(x,y,u,): u_{i}-t=0,\text{
}g_{i}(x,y)+t=0\right\},\\
I_{\varphi^{t}_{KS}}^{+0}(x,y,u)  & :=\left\{i\in I_{\varphi^{t}_{KS}}(x,y,u): u_{i}-t>0,\text{
}g_{i}(x,y)+t=0\right\},\\
I_{\varphi^{t}_{KS}}^{0+}(x,y,u)  & :=\left\{i\in I_{\varphi^{t}_{KS}}(x,y,u): u_{i}-t=0,\text{
}g_{i}(x,y)+t<0\right\}.
\end{align*}
       	Similarly from the stationality assumption of the point $x^{k}$ of \eqref{KKT-RG} for $t:=t^k$ and $\mathcal{R}=KS$,
       	we can find $(y^{k},u^{k})\in\mathcal{S}_{KS}^{t_{k}}(x^{k})$ and
       	multipliers $(\alpha^{k},\beta^{k},\mu^{k},\gamma^{k},\delta^{k})$ such that
       	\begin{equation}
       	\left\{
       	\begin{array}
       	[c]{l}
       	\nabla_{x}F(x^{k},y^{k})+\nabla G(x^{k})^{\top}\alpha^{k}+\nabla
       	_{x}\mathcal{L}(x^{k},y^{k},u^{k})^{\top}\beta^{k}-\sum\limits_{i=1}^{q}
       	\gamma_{i}^{k}\nabla_{x}g_{i}(x^{k},y^{k})\\
       	\qquad \qquad \qquad \qquad \qquad \qquad \qquad \qquad \qquad\qquad \qquad \qquad \quad \;\, -\sum\limits_{i=1}^{q}\delta_{i}^{k}\nabla_{x}\varphi_{{i}_{KS}}^{t_{k}}(x^{k},y^{k},u^{k})=0,\\
       	\nabla_{y}F(x^{k},y^{k})+\nabla_{y}\mathcal{L}(x^{k},y^{k},u^{k})^{\top}
       	\beta^{k}-\sum\limits_{i=1}^{q}\gamma_{i}^{k}\nabla_{y}g_{i}(x^{k},y^{k}
       	)
       	-\sum\limits_{i=1}^{q}\delta_{i}^{k}\nabla_{y}\varphi_{{i}_{KS}}^{t_{k}}(x^{k},y^{k},u^{k})=0,\medskip \\
       	\nabla_{y}g_{i}(x^{k},y^{k})\beta^{k}+\mu_{i}^{k}-\delta_{i}^{k}\nabla_{u}
       	\varphi_{{i}_{KS}}^{t_{k}}(x^{k},y^{k},u^{k})=0, \;\, i=1,\ldots,q,
       	\end{array}
       	\right. \label{k13bis}
       	\end{equation}
       	with the following complementarity conditions:
       	\begin{equation}
       	\left.
       	\begin{array}[c]{r}
       	\mu_{i}^{k}\geq0,\text{ }u_{i}^{k}\geq0\text{\ and }\mu_{i}^{k}u_{i}%
       	^{k}=0\\[1ex]%
       	\gamma_{i}^{k}\geq0,\text{ }g_{i}(x^{k},y^{k})\leq0\text{ \ and }\gamma
       	_{i}^{k}g_{i}(x^{k},y^{k})=0\\
       	\delta_{i}^{k}\geq0,\text{ }\varphi_{{i}_{KS}}^{t_{k}}(x^{k},y^{k},u^{k})\leq0\text{ \ and
       	}\delta_{i}^{k}\varphi_{{i}_{KS}}^{t_{k}}(x^{k},y^{k},u^{k})=0
       	\end{array}
       	\right\}  \text{ \ \ }\forall i=1,\ldots,q,\label{k14}
       	\end{equation}
       	together with
       	$
       	\alpha_{j}^{k}\geq0$, $G_{j}(x^{k})\leq 0$, and $\alpha_{j}^{k}
       	G_{j}(x^{k})=0$ for $j=1,\ldots, p,
       	$
        where the gradients of $\varphi$ are given by
\begin{equation}
\nabla_{x}\varphi^{t}_{i,KS}(x,y,u)=\left\{
\begin{tabular}
[c]{lll}
$-\nabla_{x}g_{i}(x,y)(u_{i}-t)$ & if & $u_{i}-g_{i}(x,y)\geq2t,$\\[1ex]
$-\nabla_{x}g_{i}(x,y)(g_{i}(x,y)+t)$ & if & $u_{i}-g_{i}(x,y)<2t,$
\end{tabular}
\right. \label{dx}
\end{equation}
\begin{equation}
\nabla_{y}\varphi^{t}_{i,KS}(x,y,u)=\left\{
\begin{tabular}
[c]{lll}
$-\nabla_{y}g_{i}(x,y)(u_{i}-t)$ & if & $u_{i}-g_{i}(x,y)\geq2t,$\\[1ex]
$-\nabla_{y}g_{i}(x,y)(g_{i}(x,y)+t)$ & if & $u_{i}-g_{i}(x,y)<2t,$
\end{tabular}
\right. \label{dy}
\end{equation}
and
\begin{equation}
\nabla_{u}\varphi^{t}_{i,KS}(x,y,u)=\left\{
\begin{tabular}
[c]{lll}
$-g_{i}(x,y)-t$ & if & $u_{i}-g_{i}(x,y)\geq2t,$\\[1ex]
$-(u_{i}-t)$ & if & $u_{i}-g_{i}(x,y)<2t.$
\end{tabular}
\right. \label{du}
\end{equation}
Using formulas (\ref{dx}), (\ref{dy}) and (\ref{du}), equation
       	(\ref{k13bis}) becomes
       	\begin{equation}
       	\left\{
       	\begin{array}
       	[c]{l}
       	\nabla_{x}F(x^{k},y^{k})+\nabla G(x^{k})^{\top}\alpha^{k}+\nabla
       	_{x}\mathcal{L}(x^{k},y^{k},u^{k})^{\top}\beta^{k}-\nabla_{x}g(x^{k}%
       	,y^{k})^{T}\gamma^{k}+\nabla_{x}g(x^{k},y^{k})^{T}\delta^{g,k}=0,\\[1ex]
       	\nabla_{y}F(x^{k},y^{k})+\nabla_{y}\mathcal{L}(x^{k},y^{k},u^{k})^{\top}%
       	\beta^{k}-\nabla_{y}g_{i}(x^{k},y^{k})^{T}\gamma^{k}+\nabla_{y}g_{i}%
       	(x^{k},y^{k})^{T}\delta^{g,k}=0,\\[1ex]
       	\nabla_{y}g_{i}(x^{k},y^{k})\beta^{k}+\mu_{i}^{k}-\delta_{i}^{u,k}=0\text{
       		\ \ \ }\forall i=1,\ldots,q
       	\end{array}
       	\right. \label{k15bis}
       	\end{equation}
       	where
       	\begin{equation}
       	\delta_{i}^{g,k}:=\left\{
       	\begin{tabular}
       	[c]{ll}
       	$\delta_{i}^{k}(u_{i}^{k}-t_{k})$ & if $i\in I_{\varphi^{t_{k}}_{KS}}^{+0}(x^{k},y^{k},u^{k})$,\\[1ex]
       	$0$ & otherwise
       	\end{tabular}
       	\right.  \label{k17}
       	\end{equation}
       	\text{ and }
       	\begin{equation}
       	\delta_{i}^{u,k}:=\left\{
       	\begin{tabular}
       	[c]{ll}
       	$-\delta_{i}^{k}(g_{i}(x^{k},y^{k})+t_{k})$ & if $i\in I_{\varphi^{t_{k}}_{KS}}^{0+}
       	(x^{k},y^{k},u^{k})$,\\[1ex]
       	$0$ & otherwise.
       	\end{tabular}
       	\right. \label{k17*}
       	\end{equation}
       	Observe that $\delta_{i}^{g,k},\delta_{i}^{u,k}\geq0$ for all $i,k$ and
       	\begin{equation}
       	\mathrm{supp}\delta^{g,k}\cap\mathrm{supp}\delta^{u,k}=\emptyset\label{15b}
       	\end{equation}
       	for all $k.$ 
        Let us prove that the sequence $(\chi_{k}):=
       	\begin{pmatrix}
       	\alpha^{k},\beta^{k},\mu^{k},\gamma^{k},\delta^{u,k},\delta^{g,k}
       	\end{pmatrix}_{k}$ is bounded. let's assume the opposite and consider the sequence$\left(  \dfrac{\chi_{k}}{\left\Vert \chi_{k}\right\Vert }\right)  _{k}$ which converges to a non vanishing vector $\chi:=(\alpha,\beta,\mu,\gamma,\delta^{u},\delta^{g})$ (otherwise, a suitable subsequence is chosen). So $\delta_{i}^{u}\geq0,$ $\delta_{i}^{g}\geq0$ and from (\ref{k15bis}), we obtain
       	\begin{align}
       	\nabla G(\bar{x})^{T}\alpha+\nabla_{x}\mathcal{L}(\bar{x},\bar{y},\bar{u}^{T}\beta-\nabla_{x}g(\bar{x},\bar{y})^{T}\gamma+\nabla_{x}g(\bar{x},\bar{y})^{T}\delta^{g}  & =0,\label{k18bis}\\
        \nabla_{y}\mathcal{L}(\bar{x},\bar{y},\bar{u})^{T}\beta-\nabla_{y}g(\bar{x},\bar{y})^{T}\gamma
        +\nabla_{y}g(\bar{x},\bar{y})^{T}\delta^{g} & =0, \label{k19bis}\\
       \forall i=1,\ldots, q: \;\; \nabla_{y}g_{i}(\bar{x},\bar{y})\beta+\mu_{i}-\delta_{i}^{u}  & =0.\label{k19b}
       \end{align}
       	Let
       	\begin{equation}
       	\tilde{\gamma}_{i}:=\left\{
       	\begin{tabular}
       	[c]{ll}
       	$-\gamma_{i}$ & if $i\in\mathrm{supp}\gamma$,\\
       	$\delta_{i}^{g}$ & if $i\in\mathrm{supp}\delta^{g}$,\\
       	$0$ & otherwise
       	\end{tabular}
       	\right.  \text{ \ and \ }\tilde{\mu}_{i}:=\left\{
       	\begin{tabular}
       	[c]{ll}%
       	$\mu_{i}$ & if $i\in\mathrm{supp}\mu$,\\
       	$-\delta_{i}^{u}$ & if $i\in\mathrm{supp}\delta^{u}$,\\
       	$0$ & otherwise,
       	\end{tabular}
       	\right. \label{k20}
       	\end{equation}
       	which are well define since we have
       	\begin{equation}
       	\mathrm{supp}\gamma\cap\mathrm{supp}\delta^{g}=\emptyset\text{ \ and
       		\ }\mathrm{supp}\mu\cap\mathrm{supp}\delta^{u}=\emptyset\label{k24}%
       	\end{equation}
       	from the fact that
       	\[
       	\left\{
       	\begin{array}
       	[c]{c}
       	I_{u}(x^{k},y^{k},u^{k})\cap I_{\varphi^{t_{k}}_{KS}}^{0+}(x^{k},y^{k},u^{k})=\emptyset,\\
       	I_{g}(x^{k},y^{k},u^{k})\cap I_{\varphi^{t_{k}}_{KS}}^{+0}(x^{k},y^{k},u^{k})=\emptyset,
       	\end{array}
       	\right.
       	\]
       	for all $k\in\mathbb{N}$. Hence, it holds that
       	\begin{align}
       	\nabla G(\bar{x})^{T}\alpha+\nabla_{x}\mathcal{L}(\bar{x},\bar{y},\bar{u}
       	)^{T}\beta+\nabla_{x}g^{T}(\bar{x},\bar{y})\tilde{\gamma}  & =0,\label{K22}\\
       	\nabla_{y}\mathcal{L}(\bar{x},\bar{y},\bar{u})^{T}\beta+\nabla_{y}g^{T}
       	(\bar{x},\bar{y})\tilde{\gamma}  & =0,\label{K23}\\
       	\forall i=1,\ldots, q: \;\; \nabla_{y}g_{i}(\bar{x},\bar{y})\beta+\tilde{\mu}_{i}  & =0 \label{K23b}
       	\end{align}
       	with
       	\begin{equation}
       	\mathrm{supp}\alpha\subset I_{G}(x^{k})\subset I_{G},\label{K26}
       	\end{equation}%
       	\begin{equation}
       	\mathrm{supp}\tilde{\mu}=\mathrm{supp}\mu\cup\mathrm{supp}\delta^{u}\subset
       	I_{u}(x^{k},y^{k},u^{k})\cup I_{\varphi^{t_{k}}_{KS}}^{0+}(x^{k},y^{k},u^{k}
     )\subset\theta\cup\eta\label{K27}
       	\end{equation}
       	and
       	\begin{equation}
       	\mathrm{supp}\tilde{\gamma}=\mathrm{supp}\gamma\cup\mathrm{supp}\delta
       	^{g}\subset I_{g}(x^{k},y^{k},u^{k})\cup I_{\varphi^{t_{k}}_{KS}}^{+0}(x^{k},y^{k},u^{k})\subset\theta\cup\nu,\label{K28}%
       	\end{equation}
       	while observing that for any $k$ sufficiently large,%
       	\begin{align*}
       	I_{u}(x^{k},y^{k},u^{k})  & \subset I_{\varphi^{t_{k}}_{KS}}^{00}(x^{k},y^{k}
       	,u^{k})\cup I_{\varphi^{t_{k}}_{KS}}^{0+}(x^{k},y^{k},u^{k})\subset\theta\cup
       	\eta,\\
       	I_{g}(x^{k},y^{k},u^{k})  & \subset I_{\varphi^{t_{k}}_{KS}}(x^{k},y^{k}
       	,u^{k})\cup I_{\varphi^{t_{k}}_{KS}}^{+0}(x^{k},y^{k},u^{k})\subset\theta\cup\nu.
       	\end{align*}
       	Consequently,
       	$  	\tilde{\gamma}_{\eta}=0$ and $\nabla_{y}g_{\nu}(\bar{x},\bar
       	{y})\beta=0$. On the other hand,
       	$\alpha_{j}\geq0$, $G_{j}(\bar{x})\leq0$, $\alpha_{j}G_{j}(\bar{x})=0$ for $j=1, \ldots, p$.	
       	Let us now prove that
       	$
       	\tilde{\gamma}_{i}(\nabla_{y}g_{i}(\bar{x},\bar{y}))\beta\geq0
       	$
       	for all $i\in\theta.$ Assume that $\tilde{\gamma}_{i}>0$ and $\nabla_{y}
       	g_{i}(\bar{x},\bar{y})\beta<0$ for some $i\in\theta$ then from (\ref{k20}),
       	$i\in\mathrm{supp}\delta^{g}.$ So that for all $k$ large enough,
       	$i\in\mathrm{supp}\delta^{g,k}\subset I_{\varphi^{t_{k}}_{KS}}^{+0}(x^{k},y^{k},u^{k})$
       	so $\mu_{i}^{k}=0$ and $\delta_{i}^{u,k}=0 $ from properties (\ref{k14}) and
       	(\ref{15b}) respectively. This leads to a contradiction since $\nabla_{y}
       	g_{i}(x^{k},y^{k})\beta_{i}^{k}=0 $ and the limit $\nabla_{y}g_{i}(\bar
       	{x},\bar{y})\beta<0$ by assumption. Similarly, if $\tilde{\gamma}_{i}<0$ with
       	$\nabla_{y}g_{i}(\bar{x},\bar{y})\beta>0$ that is from (\ref{k20}),
       	$i\in\mathrm{supp}\gamma.$ So that for all $k$ large enough, $i\in
       	\mathrm{supp}\gamma^{k}$ and thus $i\notin\mathrm{supp}\delta^{k}.$ This
       	implies that $\delta_{i}^{u,k}=0$ and so $\nabla_{y}g_{i}(x^{k},y^{k}%
       	)\dfrac{\beta^{k}}{\left\Vert \chi_{k}\right\Vert }=\dfrac{-\mu_{i}^{k}%
       	}{\left\Vert \chi_{k}\right\Vert }$ converges to $\nabla_{y}g_{i}(\bar{x}%
       	,\bar{y})\beta=-\mu_{i}\leq0$ which leads to a contradiction too.
       	Consequently, 
       	\[
       	\left\{
       	\begin{array}
       	[c]{l}%
       	\nabla_{y}\mathcal{L}(\bar{x},\bar{y},\bar{u})^{T}\beta+\nabla_{y}g^{T}%
       	(\bar{x},\bar{y})\tilde{\gamma}=0,\\
       	\nabla_{y}g_{v}(\bar{x},\bar{y})\beta=0,\text{ \ }\tilde{\gamma}_{\eta}=0,\\
       	\tilde{\gamma}_{i}(\nabla_{y}g_{i}(\bar{x},\bar{y}))\beta\geq
       	0\;\;\;\mbox{ for }\;i\in\theta,
       	\end{array}
       	\right.
       	\]
       	so that $(\beta,\tilde{\gamma})\in\Lambda_{y}^{ec}(\bar{x},\bar{y},\bar{u},0)$
       	and by condition \eqref{CQMM-2}, we get
       	$
       	\nabla_{x}\mathcal{L}(\bar{x},\bar{y},\bar{u})^{T}\beta+\nabla_{x}g^{T}
       	(\bar{x},\bar{y})\tilde{\gamma}=0
       	$
       	using (\ref{K22}), it yields $\nabla G(\bar{x})^{T}\alpha=0$; this implies
       	that $\alpha=0$ ($\bar{x}$ being upper-level regular). Thus,
       	\[
       	\left\{
       	\begin{array}
       	[c]{l}%
       	\nabla_{x,y}\mathcal{L}(\bar{x},\bar{y},\bar{u})^{T}\beta+\nabla g^{T}(\bar
       	{x},\bar{y})\tilde{\gamma}=0,\\
       	\nabla_{y}g_{v}(\bar{x},\bar{y})\beta=0,\text{ \ }\tilde{\gamma}_{\eta}=0,\\
       	\tilde{\gamma}_{i}(\nabla_{y}g_{i}(\bar{x},\bar{y}))\beta\geq
       	0\;\;\;\mbox{ for }\;i\in\theta,
       	\end{array}
       	\right.
       	\]
       	that is, $(\beta,\tilde{\gamma})\in\Lambda^{ec}(\bar{x},\bar{y},\bar{u},0) $
       	and by the condition \eqref{CQMM-1} we get $(\beta,\tilde{\gamma})=0$ and from
       	(\ref{K23b}) $\tilde{\mu}=0.$ Hence $(\alpha,\beta,\mu,\gamma)=0$ so that,
       	$(\delta^{u},\delta^{g})\neq0.$ Assume that there is $i$ such that $\delta
       	_{i}^{u}>0$ (resp. $\delta_{i}^{g}>0$). Thus $\tilde{\mu}_{i}=-\delta_{i}
       	^{u}<0$ (resp. $\tilde{\gamma}_{i}=\delta_{i}^{g}>0$) which contradicts the
       	fact that $\tilde{\mu}=0$ (resp. $\tilde{\gamma}=0$)$.$       	
       	Consequently, the sequence $(\alpha^{k},\beta^{k},\mu^{k},\gamma^{k}
       	,\delta^{u,k},\delta^{g,k})_{k}$ is bounded. Consider $(\bar{\alpha}
       	,\overline{\beta},\bar{\mu},\bar{\gamma},\bar{\delta}^{u},\bar{\delta}^{g})$
       	its limit (up to a subsequence) so that we have clearly
       	\[
       	\mathrm{supp}\bar{\alpha}\subset I_{G},\text{ }\mathrm{supp}\bar{\mu}
       	\subset\theta\cup\eta,\text{ }\mathrm{supp}\bar{\gamma}\subset\theta
       	\cup\upsilon.
       	\]
       	Taking the limit in (\ref{k15bis}), we get
       	\[
       	\left\{
       	\begin{array}
       	[c]{l}
       	\nabla_{x}F(\bar{x},\bar{y})+\nabla G(\bar{x})^{T}\bar{\alpha}+\nabla
       	_{x}\mathcal{L}(\bar{x},\bar{y},\bar{u})^{T}\overline{\beta}+\sum
       	\limits_{i=1}^{q}\lambda_{i}^{g}\nabla_{x}g_{i}(\bar{x},\bar{y})=0,	\\[1ex]%
       	\nabla_{y}F(\bar{x},\bar{y})+\nabla_{y}\mathcal{L}(\bar{x},\bar{y},\bar
       	{u})^{T}\overline{\beta}+\sum\limits_{i=1}^{q}\lambda_{i}^{g}\nabla_{y}
       	g_{i}(\bar{x},\bar{y})=0,\\[1ex]
       	\nabla_{y}g_{i}(\bar{x},\bar{y})\overline{\beta}+\lambda_{i}^{u}=0\text{
       		\ \ \ \ \ }\forall i=1,\ldots,q
       	\end{array}
       	\right.
       	\]
       	where we have put
       	\[
       	\lambda_{i}^{g}:=\left\{
       	\begin{array}
       	[c]{ll}%
       	-\bar{\gamma}_{i} & \mbox{ if }\text{ }i\in\mathrm{supp}\bar{\gamma
       	},\text{\medskip}\\
       	\bar{\delta}_{i}^{g} & \mbox{ if }\text{ }i\in\mathrm{supp}\bar{\delta}%
       	^{g},\text{\medskip}\\
       	0 & \mbox{otherwise}
       	\end{array}
       	\right.  \text{ \ and \ }\lambda_{i}^{u}:=\left\{
       	\begin{array}
       	[c]{ll}%
       	\bar{\mu}_{i} & \mbox{ if }\text{ }i\in\mathrm{supp}\bar{\mu},\text{\medskip}\\
       	-\bar{\delta}_{i}^{u} & \mbox{ if }\text{ }i\in\mathrm{supp}\bar{\delta}
       	^{u},\text{\medskip}\\
       	0 & \mbox{otherwise}.
       	\end{array}
       	\right.
       	\]
       	It is clear that $\bar{\alpha}_{j}\geq0,$ $G_{j}(\bar{x})\leq0$, 
       	$\bar{\alpha}_{j}G_{j}(\bar{x})=0$ for $i=1,..,p$. Moreover,  since
       	$\mathrm{supp}\bar{\mu}\cup\mathrm{supp}\bar{\delta}^{u}\subset\theta\cup\eta$
       	and $\mathrm{supp}\bar{\gamma}\cup\mathrm{supp}\bar{\delta}^{g}\subset
       	\theta\cup v$, 
       	\[
       	\nabla_{y}g_{v}(\bar{x},\bar{y})\overline{\beta}=0,\text{ \ }\lambda_{\eta
       	}^{g}=0.
       	\]
       	It remains to prove that $(\lambda_{i}^{g}<0\wedge\nabla_{y}g_{i}(\bar{x}
       	,\bar{y})\bar{\beta}<0)\vee\lambda_{i}^{g}\nabla_{y}g_{i}(\bar{x},\bar{y}
       	)\bar{\beta}=0$ for $i\in\theta.$ Let us assume that for some $i\in\theta$,
       	\[
       	\lambda_{i}^{g}>0\;\;\mbox{ or }\;\;\nabla_{y}g_{i}(\bar{x},\bar{y}
       	)\overline{\beta}>0\text{ with }\lambda_{i}^{g}\nabla_{y}g_{i}(\bar{x},\bar
       	{y})\bar{\beta}\neq0.
       	\]
       	Note that from the previous discussion, $\lambda_{i}^{g}\nabla_{y}g_{i}
       	(\bar{x},\bar{y})\bar{\beta}\geq0$ for all $i$ in $\theta.$ Assume now that
       	$\lambda_{i}^{g}>0$. Then $i\in\mathrm{supp}\bar{\delta}^{g}$ and as
        \[
        \mathrm{supp}\bar{\delta}^{u}\cap \textrm{supp} \bar{\delta}^{g}
       	=\emptyset, \quad i\notin\mathrm{supp}\bar{\delta}^{u}
        \]
        so for all $k$ large
       	enough, $i\notin\mathrm{supp}\delta^{u,k}$and from (\ref{k15bis}), $\nabla
       	_{y}g_{i}(x^{k},y^{k})\beta_{i}^{k}=-\mu_{i}^{k}$ which converges to
       	$-\bar{\mu}_{i}\leq0$, that is, $\nabla_{y}g_{i}(\bar{x},\bar{y})\bar{\beta
       	}=0.$ This leads to a contradiction ($\lambda_{i}^{g}\nabla_{y}g_{i}(\bar
       	{x},\bar{y})\beta\neq0$ by assumption). Similarly, if $\nabla_{y}g_{i}(\bar
       	{x},\bar{y})\bar{\beta}>0$ and from (\ref{k20}), $i\in\mathrm{supp}\bar
       	{\delta}^{u}.$ Then for all $k$ large enough $i\in\mathrm{supp}\delta^{u,k}$
       	and $\nabla_{y}g_{i}(x^{k},y^{k})\beta^{k}=\delta_{i}^{u,k}\rightarrow0$ so we
       	get $\nabla_{y}g_{i}(\bar{x},\bar{y})\bar{\beta}=0$ which leads to a
       	contradiction too.

\end{document}